\def\Bbb{\mathbb}
\def\cal{\mathcal}
\newtheorem{Theorem}{Theorem}[section]
\newtheorem{Proposition}{Proposition}[section]
\newtheorem{Lemma}{Lemma}[section]
\newtheorem{Corollary}{Corollary}[section]
\theoremstyle{definition}
\newtheorem{Remark}{Remark}
\newtheorem{Assumptions}{Hypothesis}[section]
\def\cH{\mathcal H}
\def\ds{\displaystyle}
\def\fd{\mathfrak{d}}
\def\fg{\mathfrak{g}}
\def\fh{\mathfrak{h}}
\def\ve{\varepsilon}
\def\V1{\mathcal{V}_1}
\def\V2{\mathcal{V}_2}
\def\io{\int_\omega}
\def\ve{\varepsilon}
\title {Carleman estimates and null controllability for a degenerate population model}
\author{{\sc Genni Fragnelli}\\
Dipartimento di Matematica\\ Universit\`{a} di Bari "Aldo Moro"\\
Via
E. Orabona 4\\ 70125 Bari - Italy\\ email: genni.fragnelli@uniba.it}
\date{}
\begin{document}

\maketitle

\vspace{0.3cm}

\centerline{ {\it  }}

\begin{abstract}
We deal with a degenerate model describing the dynamics of a population depending on time, on age and on space. We assume that the degeneracy can  occur at the boundary or in the interior  of the space domain and we focus on null controllability problem. To this aim, we prove first Carleman estimates for the associated adjoint problem, then, via cut off functions, we prove the existence of a null control function localized in the interior of the space domain.
\end{abstract}

Keywords: population equation, degenerate equation,  Carleman
estimates, observability inequalities.

MSC 2013: 35K65, 92D25, 93B05, 93B07.

\section{Introduction}

We consider the following linear population model describing the dynamics of a single species:
\begin{equation} \label{1}
\begin{cases}
\frac{\partial y}{\partial t}+\frac{\partial y}{\partial a}
-k(x)y_{xx}+\mu(t, a, x)y =f(t,a,x)\chi_{\omega} & \quad \text{in } Q,\\
  y(t, a, 1)=y(t, a, 0)=0 & \quad \text{on }Q_{T,A},\\
 y(0, a, x)=y_0(a, x) &\quad \text{in }Q_{A,1},\\
 y(t, 0, x)=\int_0^A \beta (a, x)y (t, a, x) da  &\quad  \text{in } Q_{T,1},
\end{cases}
\end{equation}
in the domain $Q:=(0,T)\times(0,A)\times(0,1)$. Moreover, $Q_{T,A} := (0,T)\times (0,A)$, $Q_{A,1}:=(0,A)\times(0,1)$ and
$Q_{T,1}:=(0,T)\times(0,1)$. Here $y(t,a,x)$  is the distribution of certain individuals of age $a\in (0,A)$ at time $t\in(0,T)$ and location $x \in (0,1)$, while $ \chi_\omega$ is the characteristic function of  $\omega\subset(0,1)$, which is the region where the control $f$ acts; $A$ is the maximal age of life, and $\beta$ and $\mu$ are the natural fertility and the death rate, respectively. Thus, the formula $\int_0^A \beta y da$ denotes the distribution of newborn individuals at time $t$ and location $x$.  The function $k$ is the dispersion coefficient and we assume that it depends on the space variable $x$ and can degenerate at the boundary  or in the interior of the state space.

In the last centuries, population models have been widely investigated by many authors from many points of view (see, for example, \cite{diekmann}, \cite{diekmann1}, \cite{rhandi}, \cite{fim}, \cite{fmvM3}, \cite{ft}, \cite{langlaisJDE}, \cite{langlaisSIAM}, \cite{rhandi1}, \cite{tello}, \cite{turanova}, \cite{webb}). In particular, one of the most studied problem has been the controllability of the system. Indeed, $y$ can represent the distribution of a {\it demaging insect population} or of a {\it pest population} (see, for example, \cite{he}), thus it is important to control it. For example in \cite{he}, where \eqref{1} models an insect growth, the control corresponds to a removal of individuals by using pesticides.

However, in the cited papers, the function $k$ is either a constant or a strictly positive function depending on $a$.  In such cases, it is well known from the general theory that all nontrivial solutions of the corresponding system (commonly named Lotka-McKendrick systems) are asymptotically exponentially growing or decaying, according to the size of a certain biological quantity (the so called net reproduction rate), see \cite{An} and also \cite{fmvM3} for related results concerning  time-independent steady states.

In this paper we are not interested in large time controllability, i.e. asymptotic behavior of the solution of \eqref{1}, but we want to address the problem of null controllability at each fixed time $T>0$. More precisely, we will give sufficient conditions so that, for all initial data $y_0$ in a suitable space, there exists a control $f$ that brings the solution $y$ of \eqref{1} at time $T$ at zero, i.e.
\[
y(T,a,x) =0\]
for all $x\in(0,1)$ and all $a$ in  a suitable subdomain of $(0,A)$.

Our study has obviosly many connections with related ones for the heat equation. Let us recall that the null controllability for linear parabolic equations has been extensively studied in the last years using Carleman inequalities and duality argument, not only when $k$ is a constant (see, for example, \cite{lr}, \cite{LRL}), but also when $k$ degenerates at the boundary of the space domain (see, for example, \cite{acf}, \cite{cfr}-\cite{cmv2}, \cite{e}, \cite{fl}) or in the interior (see, for example, \cite{bcg}, \cite{bfm1}, \cite{f}, \cite{JDE}, \cite{fm}-\cite{fm2}). As far as we know, the first controllability result for an age population dynamics model is established in \cite{ainsebalanglais}, where the authors proved that a set of profiles is approximately reachable. Later, in \cite{ainsebaanita} a local exact controllability was proved. In particular, the authors showed that, if the initial distribution is small enough, one can find a control that leads the population to extinction (see also \cite{ainsebaiannelli} and \cite{iannelli}).
Null controllability is also studied for {\it nonlinear} population dynamics models, see \cite{ainsebaiannelli} and \cite{traore}: in the first paper the authors  studied the controllability of nonlinear diffusive dynamic populations when the fertility and the mortality rates depend on the total population; in the second one, the authors considered a nonlinear distribution of newborns of the form $F(\int_0^A \beta (t,a, x)y (t, a, x) da)$.
However, in all the previous papers the dispersion coefficient {\it $k$ is a constant or a strictly positive function}.

To our best knowledge, \cite{aem} is the first paper where  the dispersion coefficient, which depends on the space variable $x$, can degenerate.  In particular, the authors assume that $k$ degenerates  at the boundary (for example $k(x) = x^\alpha,$  being $x \in (0,1)$ and $\alpha >0$). Using Carleman estimates for the adjoint problem, the authors prove null controllability for \eqref{1} under the condition $T\ge A$. However, this assumption is not realistic when $A$ is too large. To overcome this problem in \cite{em}, the authors used Carleman estimates and a fixed point method via the Leray - Schauder Theorem. However, while  in \cite{aem} and in \cite{em}, the degenerating operator is in {\it divergence form} - shortly (Df) -, i.e. $(k(x)y_x)_x$, in this paper we consider the degenerating operator in {\it nondivergence form} - shortly (NDf) - and we allow the function $k$ to degenerate not only at the boundary, but also in the interior of the state space. Observe that, in the case of a boundary degeneracy, we cannot derive the null controllabilility 
for \eqref{1} by the one of the problems in divergence form. Indeed, it is proved in \cite{cfr} that in this situation, i.e. when the degeneracy is at the boundary of the domain, and when the functions are independent of $a$ (i.e. if we have the degenerate heat equation), the equation of \eqref{1} can be rewritten as
\begin{equation}\label{divnew}
\frac{\partial y}{\partial t}  
-(k(x)y_{x})_x+ k_x(x)y_x+\mu(t, x)y =f(t,x)\chi_{\omega} \end{equation}
at the price of adding the drift term $k_x(x)y_x$. Such an addition has major consequences: as described in \cite{cmv0}, degenerate equations of the form \eqref{divnew} are well posed in $L^2(0,1)$ under the structural assumption
\[
k_x(x) \le C\sqrt{k(x)},
\]
for a strictly positive constant $C$. Imposing this condition on $k_x$, for $k(x)=x^\alpha$, gives $\alpha \ge $2. This necessary condition that ensures the well posedness of \eqref{1} makes it not null controllable (see \cite{fm1} for the interior degeneracy).
For this reason, in this paper as in \cite{cfr}, \cite{cfr1}, \cite{f}, \cite{JDE} or \cite{fm1}, we prove null controllability for \eqref{1} without deducing it by the previous results for the problem in divergence form. Therefore, this paper complements \cite{aem}. Indeed, we do not require as in \cite{aem}, that $T \ge A$, but $T <A$ (see Hypothesis \ref{conditionbeta}).  Clearly, this assumption is more interesting, since it is reasonable to control the population  in small times and this is important if $y$ represents, for example, a demaging insect population or a pest population. Moreover,  while in \cite{em}  the authors  used Carleman estimates and a generalization of the Leray - Schauder fixed point Theorem and the multi-valued theory, here we use only Carleman estimates for the non degenerate and the degenerate problem,  and a  technique based on cut off functions, making the proof slimmer and easier to read. Last but not the least, we underline that in \cite{aem} and in \cite{em} only the case of a boundary degeneracy is considered. 
If the function $k$ in \eqref{1} degenerates in the interior of $(0,1)$ and the problem is in divergence form,  related results can be founded in \cite{idriss}. To our best knowledge, as written before, this is the first paper where  the problem in {\it nondivergence form} is considered allowing the diffusion coefficient to degenerate at the boundary or  in the interior of $(0,1)$ (when $y$ is independent of $a$ we refer, for example, to \cite{fm1}). We underline that in \cite{idriss} the authors assume that, if $x_0 \in (0,1)$ is the degenerate point, the function $k\in
C[0,1] \cap C^1([0,1]\setminus \{x_0\})$; moreover, they require the existence of a constant $M \in [0,1)$ such that
$(x-x_0)k' \le M k$ a.e. in $[0,1]$. In this paper, we consider a less regular function $k$ and we allow the constant $M$ to approach $2$, i.e. $M \in [0,2)$, considering the so-called strongly degenerate case.

The paper is organized in the following way: in Section \ref{sec3-1} we study the well posedness of the problem in the case that the dispersion coefficient $k$ degenerates either at the boundary or in the interior of the state space. Section  \ref{sec-3} is divided into three subsections: in the first one we deduce a Carleman estimate for the non degenerate problem in nondivergence form by a Carleman estimate for the non degenerate  problem in divergence form (for the reader's convenience, we give its proof in the Appendix); the second and the third subsections are devoted to study Carleman estimates in the case that $k$ degenerates at the boundary of the state space or in its interior, respectively. Finally, in Section \ref{osservabilita} we prove null controllability via a null controllability result for an intermediate system, observality inequalities and cut off functions. 

A final comment on the notation: by $c$ or $C$ we shall denote
{\em universal} strictly positive constants, which are allowed to vary from line to
line.

\section{Well posedness result}\label{sec3-1}

To study  well posedness we assume that the dispersion coefficient  $k$ satisfies one of the following assumptions:
\begin{Assumptions}\label{BAss} {\bf Boundary degenerate case (BD):} 
\[
k \in  C([0,1])\quad
 k>0 \text{ in }(0,1) \text{ and } k(0)=0 \text{ or } k(1)=0.
 \]
 \end{Assumptions}
 \begin{Assumptions}\label{Ass0}
{\bf Interior weakly degenerate case (IWD):} There exists $x_0 \in (0,1)$ such
that $k(x_0)=0$, $k>0$ on $[0, 1]\setminus \{x_0\}$, $k\in
W^{1,1}(0,1)$ and there exists $M\in (0,1)$ such that $(x-x_0)k'
\le Mk$ a.e. in $[0,1]$.
\end{Assumptions}

\begin{Assumptions}\label{Ass01}
{\bf Interior strongly degenerate case (ISD):} There exists $x_0 \in (0,1)$
such that $k(x_0)=0$, $k>0$ on $[0, 1]\setminus \{x_0\}$, $k\in
W^{1, \infty}(0,1)$ and there exists $M \in [1,2)$ such that
$(x-x_0)k' \le M k$ a.e. in $[0,1]$.
\end{Assumptions}

Thus, we assume that the function $k$ can degenerate at the boundary of the domain or at an interior point; for example, as $k$ one can consider $k(x)=x^\alpha$, $k(x)=(1-x)^\alpha$ or $k(x)= |x-x_0|^\alpha$, $\alpha >0$.
\vspace{0.5cm}

On the rates $\mu$ and $\beta$ we assume:
\begin{Assumptions}\label{ratesAss}
 The  functions $\mu$ and $\beta$ are such that
\begin{equation}\label{3}
\begin{aligned}
&\bullet \beta \in C(\bar Q_{A,1}) \text{ and } \beta \geq0  \text{ in } Q_{A,1}, \\
&\bullet \mu \in C(\bar Q) \text{ and }  \mu\geq0\text{ in } Q.
\end{aligned}
\end{equation}
\end{Assumptions}

To prove well posedness  of  \eqref{1}, we introduce, as in \cite{cfr} or in \cite{cfr1}, the following
weighted Lebesgue and Hilbert spaces
\begin{equation}\nonumber
 L^2_{\frac{1}{k}}(0,1) :=\left\{ u \in L^2(0,1) \
\Big|  \int_0^1u^2\frac{1}{k}dx<\infty \right\},
\end{equation}
\begin{equation}\label{578}
\qquad H^1_{\frac{1}{k}}(0,1)
:=L^2_{\frac{1}{k}}(0,1)\cap H^1_0(0,1)
\end{equation}
and
\begin{equation}\label{579}
H^2_{\frac{1}{k}}(0,1) :=\Big\{ u \in H^1_{\frac{1}{k}}(0,1)\; \big|
\;ku_{xx} \in L^2_{\frac{1}{k}}(0,1)\Big\},
\end{equation}
 in the boundary degenerate case; while in the interior degenerate case,  as in \cite{fm1}, we consider, in place of $H^2_{\frac{1}{k}}(0,1)$, the space
\[
H^2_{\frac{1}{k}, x_0}(0,1) :=\Big\{ u \in H^1_{\frac{1}{k}}(0,1) \,
\big| \, u'\in H^1(0,1)\Big\},
\]
that can be written  in
a more appealing way as
\[
H^2_{\frac{1}{k}, x_0}(0,1) :=\Big\{ u \in H^1_{\frac{1}{k}}(0,1) \,
\big| \, u'\in H^1(0,1) \mbox{ and }ku'' \in
L^2_{\frac{1}{k}}(0,1)\Big\}.
\]
In every case, we consider the following norms
\[
\|u\|_{{\frac{1}{k}}}^2:= \int_0^1u^2\frac{1}{k}dx,
\]
\[
\|u\|_{1,{\frac{1}{k}}}^2 := \int_0^1u^2\frac{1}{k}dx + \int_0^1u_x^2dx
\]
and
\[
\|u\|_{2,{\frac{1}{k}}}^2 :=\|u\|_{1,{\frac{1}{k}}}^2 + \int_0^1ku^2_{xx}dx.
\]
Observe that, if $k$ is nondegenerate, the spaces $L^2_{\frac{1}{k}}(0,1)$, $H^1_{\frac{1}{k}}(0,1)$ and $H^2_{\frac{1}{k}}(0,1)$ (or $H^2_{\frac{1}{k}, x_0}(0,1)$) coincide, respectively, with $L^2(0,1)$, $H^1_0(0,1)$ and $H^2(0,1)\cap H^1_0(0,1).$

Denoting by
$
\cH^2_{{\frac{1}{k}}}(0,1) 
$ the space $
H^2_{{\frac{1}{k}}}(0,1) 
$  or $
H^2_{{\frac{1}{k}, x_0}}(0,1) ,
$ 
we have, as in \cite{cfr}, \cite{cfr1} or \cite{fm1}, that the  operator
\[\mathcal A_0u:= ku_{xx},\qquad    D(\mathcal A_0): = \cH^2_{{\frac{1}{k}}}(0,1)\]
is self--adjoint, nonpositive  and generates an analytic contraction
semigroup of angle $\pi/2$ on the space $L^2_{{\frac{1}{k}}}(0,1)$.

Now, setting $ \mathcal A_a u := \ds \frac{\partial  u}{\partial a}$, we have that
\[
\mathcal Au:= \mathcal A_a u - 
\mathcal A_0 u,
\]
for 
\[
u \in D(\mathcal A) =\left\{u \in L^2(0,A;D(\mathcal A_0)) : \frac{\partial u}{\partial a} \in  L^2(0,A;H^1_{\frac{1}{k}}(0,1)), u(0, x)= \int_0^A \beta(a, x) u(a, x) da\right\},
\]
generates a strongly continuous semigroup on $L^2(0,A) \times L^2 _{{\frac{1}{k}}}(0,1)$ (see also \cite{iannelli}). Moreover, the operator $B(t)$ defined as
\[
B(t) u:= -\mu(t,a,x) u,
\]
for $u \in D(\mathcal A)$, can be seen as a bounded perturbation of $\mathcal A$ (see, for example, \cite{acf}); thus also
$
(\mathcal A + B(t), D(\mathcal A))
$ generates a strongly continuous semigroup.

Setting $L^2_{{\frac{1}{k}}}(Q):= L^2(Q_{T,A}; L^2_{\frac{1}{k}}(0,1))$ and $L^2_{{\frac{1}{k}}}(Q_{A,1}):= L^2(0,A;L^2_{\frac{1}{k}}(0,1))$, the following well posedness  result holds (see \cite{en}, \cite{lm}):
\begin{Theorem}\label{theorem_nondivergence}
Assume that Hypotheses $\ref{ratesAss}$ and one among Hypothesis $\ref{BAss}$ - $\ref{Ass01}$ are satisfied.  For all $f \in
L^2_{{\frac{1}{k}}}(Q)$ and $y_0 \in L^2_{{\frac{1}{k}}}(Q_{A,1})$, the system \eqref{1} admits a unique solution 
$y \in \mathcal U:= C\big([0,T];
L^2_{{\frac{1}{k}}}(Q_{A,1})\big) \cap L^2 \big(0,T;
H^1(0,A; H^1_{\frac{1}{k}}(0,1)\big)$.
In addition, if $f\equiv 0$, 
$
u\in C^1\big([0,T];L^2_{{\frac{1}{k}}}(Q_{A,1})\big).
$ 
\end{Theorem}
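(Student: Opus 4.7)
The strategy is to recast \eqref{1} as the non-autonomous abstract Cauchy problem
\[
y'(t) = (\mathcal{A}+B(t))\,y(t) + F(t), \qquad y(0) = y_0,
\]
on the Hilbert space $H := L^2_{\frac{1}{k}}(Q_{A,1})$, where $F(t)(a,x) := f(t,a,x)\chi_\omega(x) \in L^2(0,T;H)$ and the non-local renewal condition $y(t,0,x) = \int_0^A \beta(a,x)\,y(t,a,x)\,da$ has been absorbed into $D(\mathcal{A})$.

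First I would invoke the two facts just recalled in the paper: $(\mathcal{A}, D(\mathcal{A}))$ generates a strongly continuous semigroup on $H$, and $B(\cdot) \in C([0,T]; \mathcal{L}(H))$ is a bounded perturbation, uniformly bounded in $t$ because $\mu \in C(\bar Q)$ is bounded on the compact set $\bar Q$. Standard non-autonomous linear semigroup theory (e.g.\ \cite{en,lm}) then yields, for every $y_0 \in H$ and every $F \in L^2(0,T;H)$, a unique mild solution $y \in C([0,T]; H)$ given by the variation-of-constants formula; uniqueness follows from Grönwall applied to the difference of two candidate solutions.

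Next I would upgrade the regularity to $L^2\bigl(0,T; H^1(0,A)\times H^1_{\frac{1}{k}}(0,1)\bigr)$ by an energy estimate: pair the equation with $y$ in the weighted inner product of $H$ (i.e. integrate against $y/k$ on $Q_{A,1}$). The diffusion term $-k y_{xx}$ produces $\int_0^A\!\!\int_0^1 y_x^2\,dx\,da$ after integration by parts in $x$ (boundary terms vanish via the Dirichlet condition at $x=0,1$); the age-transport term produces $\frac{1}{2}\int_0^1 \bigl(y^2(t,A,x)-y^2(t,0,x)\bigr)\frac{dx}{k}$, where the negative contribution at $a=0$ is controlled by applying Cauchy--Schwarz to the renewal condition, which gives
\[
\int_0^1 \frac{y(t,0,x)^2}{k(x)}\,dx \le A\,\|\beta\|_\infty^2\,\|y(t,\cdot,\cdot)\|_H^2;
\]
the potential term is of favourable sign thanks to $\mu\ge 0$, and the source $F$ is absorbed using Young's inequality and Grönwall. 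Integrating in $t$ over $(0,T)$ yields the claimed integrability, and the $H^1(0,A)$ regularity in $a$ follows by pairing instead with $y_a/k$ (or by bootstrapping from the equation itself).

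For the last assertion, when $f\equiv 0$ the analyticity of $e^{t\mathcal{A}_0}$ of angle $\pi/2$ on $L^2_{\frac{1}{k}}(0,1)$ provides instant smoothing in the space variable, so that $\mathcal{A}_0 y(t)\in H$ continuously in $t$; the age-transport piece contributes $\partial_a y$, which the mild formulation and the continuity of $B(\cdot)$ render continuous in $t$ as well, giving $y\in C^1$ in the time variable with values in $H$. The main obstacle of the whole argument is the age-boundary term at $a=0$ in the energy estimate: controlling $\int_0^1 y(t,0,x)^2 k(x)^{-1}\,dx$ by the $H$-norm of $y(t,\cdot,\cdot)$ through the non-local renewal condition must be done with care, since the weight $1/k$ is singular precisely where one wishes to apply Cauchy--Schwarz; it is the boundedness and non-negativity of $\beta$, together with the $a$-integral structure of the renewal condition, that make this absorption possible.
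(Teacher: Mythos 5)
Your main line of attack coincides with the paper's: Theorem \ref{theorem_nondivergence} is obtained there precisely by viewing \eqref{1} as an abstract Cauchy problem on $L^2(0,A)\times L^2_{\frac{1}{k}}(0,1)$, using that $\mathcal A_0u=ku_{xx}$ generates an analytic contraction semigroup on $L^2_{\frac{1}{k}}(0,1)$, that $\mathcal A=\mathcal A_a-\mathcal A_0$ with the renewal condition built into $D(\mathcal A)$ generates a strongly continuous semigroup (quoting \cite{iannelli}), and that $B(t)u=-\mu u$ is a bounded perturbation; the well-posedness statement is then taken from \cite{en}, \cite{lm} without further detail. Your variation-of-constants construction, the Gr\"onwall uniqueness argument, and the weighted energy estimate (multiplying by $y/k$ and absorbing the $a=0$ boundary term via Cauchy--Schwarz in $a$ applied to the renewal condition, which indeed only uses boundedness of $\beta$ and not the weight) are consistent with this framework and with the analogous computations the paper actually carries out later, e.g.\ in the proofs of Theorem \ref{Theorem4.4} and Theorem \ref{ultimo}.

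Two of the supplementary arguments you add are, however, not sound as written. First, the concluding $C^1$ claim cannot be deduced from the analyticity of the semigroup generated by $\mathcal A_0$ alone: the full generator contains the age transport $\partial_a$ with the non-local renewal condition, and the resulting semigroup on $L^2_{\frac{1}{k}}(Q_{A,1})$ is only strongly continuous, not analytic, so there is no instantaneous smoothing in the age variable. Concretely, for $a>t$ the solution is (up to the zero-order term) the parabolic semigroup applied to $y_0(a-t,\cdot)$ — compare the implicit formula \eqref{implicitformula} used for the adjoint — so differentiating in $t$ produces $\partial_a y_0$, which does not exist for a general $y_0\in L^2_{\frac{1}{k}}(Q_{A,1})$; differentiability in time is the classical-solution statement coming from the abstract theory in \cite{en}, \cite{lm} (e.g.\ for data in $D(\mathcal A)$), which is what the paper is implicitly invoking, and your ``instant smoothing in space plus continuity of $B$'' argument does not fill that gap. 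Second, the $H^1(0,A)$ regularity in $a$ does not follow by ``pairing with $y_a/k$'': the term $\int y_{xx}\,y_a\,dx\,da$ then yields, after integration by parts, a boundary contribution at $a=0$ involving the $x$-derivative of the renewal integral, which is not controlled under Hypothesis \ref{ratesAss} ($\beta$ merely continuous and nonnegative); this regularity too is part of what the paper imports from the cited semigroup references rather than proving by an energy method.
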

\section{Carleman estimates}\label{sec-3}

In this section we show
Carleman estimates for the following system:
\begin{equation} \label{adjoint}
\begin{cases}
\ds \frac{\partial z}{\partial t} + \frac{\partial z}{\partial a}
+k(x)z_{xx}-\mu(t, a, x)z =f ,& (t,a,x) \in Q,\\
  z(t, a, 0)=z(t, a, 1)=0, & (t,a) \in Q_{T,A},\\
   z(t,A,x)=0, & (t,x) \in Q_{T,1},
\end{cases}
\end{equation}
where the function $k$ is non degenerate (this will be crucial for the following) or satisfies one of Hypothesis \ref{BAss}, \ref{Ass0} or \ref{Ass01}. 
\paragraph{Carleman inequalities in the non degenerate case}

First of all assume that $k$ is non degenerate. Then, the following estimate holds:
\begin{Theorem}\label{nondegenere}
Let $z\in 
\mathcal{V}:=L^2\big(Q_{T,A}; H^2(0,1)\cap H^1_0(0,1)\big) \cap H^1(0,T; H^1(0,A;H^1_0(0,1)))$ be the solution of
\eqref{adjoint}
where $f \in L^{2}(Q)$ and $k \in C^{1}([0,1])$ is a strictly
positive function. Then, there exist two strictly positive constants $C$ and $s_0$,
such that, for any $s\geq s_0$, $z$ satisfies the estimate
\begin{equation} \label{570}
\begin{aligned}
&\int_{Q}(s^{3}\phi^{3}z^{2}+s\phi z_{x}^{2})e^{2s\Phi} dxdadt \leq C \Big(\int_{Q}f^{2}e^{2s\Phi}dxdadt
 -
s\kappa\int_0^T\int_0^A\left[ke^{2s\Phi}\phi(z_x)^2
\right]_{x=0}^{x=1}dadt\Big).
\end{aligned}
\end{equation}
Here the functions $\phi$ and $\Phi$ are
defined as follows
\begin{equation}\label{571}
\begin{gathered}
\phi(t,a,x)=\Theta(t,a)e^{\kappa\sigma(x)}, \quad
\Theta(t, a)= \frac{1}{t^{4}(T-t)^{4}a^{4}},\\
\Phi(a,t,x)=\Theta(t,a)\Psi(x), \quad
\Psi(x)=e^{\kappa\sigma(x)}-e^{2\kappa\|\sigma\|_{\infty}},
\end{gathered}
\end{equation}
$(t,a,x)\in Q$, $\kappa>0$ and $\sigma (x) :=\mathfrak{d}\int_x^1\frac{1}{k(t)}dt$, where $\fd=\|k'\|_{L^\infty(0,1)}$.
\end{Theorem}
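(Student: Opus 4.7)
The plan is to derive this estimate as a consequence of a Carleman estimate for the corresponding \emph{divergence form} equation (which the author relegates to the Appendix). The key observation is that
\[
k(x) z_{xx} = (k(x) z_x)_x - k'(x) z_x,
\]
so the adjoint equation \eqref{adjoint} is equivalent to
\[
\frac{\partial z}{\partial t} + \frac{\partial z}{\partial a} + (k(x) z_x)_x - \mu(t,a,x) z = f + k'(x) z_x.
\]
Since $k$ is non-degenerate and $k \in C^1([0,1])$, we have $k'\in L^\infty(0,1)$ (with $\|k'\|_\infty = \mathfrak{d}$ entering the weight through $\sigma$). Thus $z$ satisfies a divergence-form equation with right-hand side $\widetilde{f} := f + k'(x) z_x$, to which the Appendix's Carleman estimate can be applied.

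First I would verify that the weight functions $\phi,\Phi$ in \eqref{571} are exactly those used in the divergence-form Carleman estimate (the choice $\sigma(x) = \mathfrak d \int_x^1 \frac{dt}{k(t)}$ is tailored to the nondivergence operator so that the first derivative $\sigma' = -\mathfrak d/k$ interacts well with the coefficient $k$). Then applying the divergence-form Carleman estimate to $z$ with source $\widetilde f$ gives, for $s \geq s_0$,
\[
\int_Q (s^3 \phi^3 z^2 + s\phi z_x^2) e^{2s\Phi}\, dxdadt
\leq C\Bigl(\int_Q \widetilde f^{\,2} e^{2s\Phi}\, dxdadt + \text{bdry}\Bigr),
\]
with the boundary contribution being precisely the term displayed in \eqref{570}. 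Expanding $\widetilde f^{\,2} \leq 2 f^2 + 2(k')^2 z_x^2$ produces a harmless $\int_Q f^2 e^{2s\Phi}$ on the right plus a perturbation
\[
2 \|k'\|_\infty^2 \int_Q z_x^2 e^{2s\Phi}\, dxdadt.
\]

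The core step is the absorption of this perturbation into the left-hand side. Since $\Theta(t,a) = \frac{1}{t^4(T-t)^4 a^4}$ is bounded below by a positive constant on $Q_{T,A}$ (it blows up only as $t\to 0,T$ or $a\to 0$), and $e^{\kappa\sigma}$ is bounded below, the weight $\phi$ satisfies $\phi(t,a,x) \geq c_0 > 0$ throughout $Q$. Therefore, choosing $s_0$ large enough so that $s c_0 \geq 4C \|k'\|_\infty^2$ for $s \geq s_0$ makes $\tfrac{1}{2} s\phi z_x^2 e^{2s\Phi}$ dominate the perturbation pointwise, and we can subtract it from both sides. This yields \eqref{570} with a new constant $C$.

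The only delicate points are (i) checking that the divergence-form Carleman estimate quoted from the Appendix really holds for this same triple $(\phi,\Phi,\sigma)$ (routine, since the Appendix proof is built around these weights), and (ii) tracking the boundary terms: the integration by parts in the divergence proof produces exactly a boundary integral proportional to $\int\int [k \phi e^{2s\Phi} z_x^2]_0^1 \,dadt$, and the perturbative term $k' z_x$ on the right does not contribute extra boundary pieces since it enters only as a source. The main obstacle is therefore not a new estimate but bookkeeping: confirming that the constant $C$ in the absorption inequality is independent of $s$ and of the solution $z$, so that a single $s_0$ works uniformly. Once that is done, \eqref{570} follows directly.
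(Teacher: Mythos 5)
Your proposal is correct and follows essentially the same route as the paper: the equation is rewritten in divergence form with source $\bar f = f + k' z_x$, Theorem \ref{nondegenerediv} is applied, and the extra term $2\|k'\|_{L^\infty(0,1)}^2\int_Q z_x^2 e^{2s\Phi}$ is absorbed into $s\int_Q \phi z_x^2 e^{2s\Phi}$ using the uniform lower bound on $\phi$ (the paper makes this explicit via $\Theta \ge 1/c$ with $c = A^4(T/2)^8$ and $e^{\kappa\sigma}\ge 1$) for $s$ large. No substantive difference from the paper's argument.
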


The proof of the previous result is based on the next Carleman estimate which is proved in the Appendix.
\begin{Theorem}\label{nondegenerediv}
Let $z \in \mathcal V$ be the solution of
\begin{equation} \label{adjointdiv}
\begin{cases}
\ds \frac{\partial z}{\partial t} + \frac{\partial z}{\partial a}
+(k(x)z_x)_x-\mu(t, a, x)z =f ,& (t,x,a) \in Q,\\
  z(t, a, 0)=z(t, a, 1)=0, & (t,a) \in Q_{T,A},\\
   z(t,A,x)=0, & (t,x) \in Q_{T,1},
  \end{cases}
\end{equation}
where $f$ and $k$ are as in the previous theorem. Then, there exist two strictly positive constants $C$ and $s_0$,
such that, for any $s\geq s_0$, $z$ satisfies the estimate
\begin{equation} \label{570'}
\begin{aligned}
&\int_{Q}(s^{3}\phi^{3}z^{2}+s\phi z_{x}^{2})e^{2s\Phi} dxdadt \leq C \Big(\int_{Q}f^{2}e^{2s\Phi}dxdadt
  -
s\kappa\int_0^T\int_0^A\left[ke^{2s\Phi}\phi(z_x)^2
\right]_{x=0}^{x=1}dadt\Big),
\end{aligned}
\end{equation}
with  $\phi$ and $\Phi$  defined as in \eqref{571}.
\end{Theorem}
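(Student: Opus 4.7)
The plan is to follow the Fursikov--Imanuvilov conjugation method, treating the age derivative $\partial_a$ analogously to the time derivative $\partial_t$ and using the fact that the weight $\Theta$ in \eqref{571} blows up at $t=0$, $t=T$, and $a=0$. First I would set $w(t,a,x):=e^{s\Phi(t,a,x)}z(t,a,x)$. Then $w$ satisfies a conjugated equation
\[
P_s w = e^{s\Phi} f + e^{s\Phi}\mu z,
\]
where $P_s$ is obtained from $\partial_t + \partial_a + (k\,\cdot\,)_{xx}$ by conjugation with $e^{s\Phi}$, and $w$ vanishes on $\{x=0\}\cup\{x=1\}$ and at $a=A$ (as $z$ does), and additionally at $t=0,T$ and $a=0$ because $\Theta\to+\infty$ and $\Psi<0$ force $e^{s\Phi}\to 0$ at superexponential rate, annihilating $w$ and all its polynomially-growing-in-$s$ derivatives.

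Next I would split $P_s = P_s^{+} + P_s^{-} + R_s$ into a formally self--adjoint part, a formally skew--adjoint part, and a lower order remainder. The standard choice places $(kw_x)_x + s^2 k \Phi_x^2 w$ in $P_s^{+}$ and the transport-type pieces $w_t + w_a + 2sk\Phi_x w_x$ in $P_s^{-}$, absorbing the single-derivative-of-$\Phi$ terms into $R_s$. The identity
\[
\|P_s^{+}w\|^2 + \|P_s^{-}w\|^2 + 2\langle P_s^{+}w, P_s^{-}w\rangle = \|P_s w - R_s w\|^2
\]
reduces the problem to producing a lower bound
\[
2\langle P_s^{+}w, P_s^{-}w\rangle \ge C\int_Q(s^3\phi^3 w^2 + s\phi w_x^2)\,dxdadt + \text{boundary},
\]
where the boundary contribution reproduces the term in \eqref{570'}.

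Expanding the cross term yields nine integrals; integrating by parts in $x$, $t$, and $a$ converts the dominant pieces into $C\int s^3\phi^3 w^2$ (from the interaction of $s^2k\Phi_x^2 w$ with $2sk\Phi_x w_x$, which by the choice $\Psi = e^{\kappa\sigma}-e^{2\kappa\|\sigma\|_\infty}$ and $\sigma'=-\fd/k$ has the correct positive sign) and $C\int s\phi w_x^2$ (from the $(kw_x)_x$ piece interacting with the transport terms). Boundary contributions at $t\in\{0,T\}$ and at $a=0$ vanish because $e^{s\Phi}$ and all $\Theta$--weighted polynomials in $s$ vanish there; contributions at $a=A$ vanish because $z(t,A,x)=0$; contributions at $x=0,1$ produce exactly the term $-s\kappa\int_0^T\!\int_0^A[k e^{2s\Phi}\phi(z_x)^2]_0^1\,dadt$ after expressing $w_x$ in terms of $z_x$ via $z\equiv 0$ on the lateral boundary. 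The remainder $R_s w$ and the term $e^{s\Phi}\mu z$ contain lower powers of $s$ and $\phi$ and are absorbed into $s^3\phi^3 w^2 + s\phi w_x^2$ for $s$ large enough, using the boundedness of $\mu$ and of the derivatives of $\sigma$ away from degeneracy.

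The main obstacle I expect is bookkeeping the boundary terms in the age variable: one must verify that the contribution at $a=0$ from integrating by parts the $w_a$ piece in $P_s^{-}$ is indeed annihilated by the weight, since no Dirichlet condition is imposed on $z$ at $a=0$ --- this is precisely why $\Theta$ is chosen to blow up as $a^{-4}$. Once the lower bound on the cross term is established and the $\mu z$ contribution absorbed, undoing the substitution through $z = e^{-s\Phi}w$ and $z_x = e^{-s\Phi}(w_x - s\Phi_x w)$, together with $|\Phi_x|^2 \sim \kappa^2\phi^2 \fd^2/k^2$, delivers \eqref{570'} after controlling the extra $s^2\kappa^2\phi^2 w^2/k$ term by the dominant $s^3\phi^3 w^2$ for $s\ge s_0$.
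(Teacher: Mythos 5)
Your proposal is correct and follows essentially the same route as the paper's Appendix proof: conjugate with $e^{s\Phi}$, split the conjugated operator into symmetric and antisymmetric parts, bound the cross term from below by the weighted $s^3\phi^3w^2+s\phi w_x^2$ integrals plus the lateral boundary term, use the blow-up of $\Theta$ at $t=0,T$ and $a=0$ together with $z(t,A,x)=0$ to kill the remaining boundary contributions, and absorb the $\mu z$ term at the end. The only (immaterial) difference is organizational: the paper places the zeroth-order pieces $-s(\Phi_t+\Phi_a)w$ and $-s(k\Phi_x)_xw$ inside $P_s^{\pm}$ rather than in a separate remainder $R_s$.
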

\begin{proof}[Proof of Theorem \ref{nondegenere}]
Rewrite the equation of
    \eqref{adjoint} as $ \ds \frac{\partial z}{\partial t} + \frac{\partial z}{\partial a}
+(k(x)z_x)_x-\mu(t, a, x)z = \bar{f}, $ where $\bar{f}
    := f + k'z_x$. Then, applying Theorem \ref{nondegenerediv}, there exist
    two strictly positive constants $C$ and $s_0 >0$, such that,
 for all $s \ge s_0$,
\begin{equation}\label{sd}
\begin{aligned}
&\int_{Q}(s^{3}\phi^{3}z^{2}+s\phi z_{x}^{2})e^{2s\Phi} dxdadt \leq C \Big(\int_{Q}\bar f^{2}e^{2s\Phi}dxdadt
 -
s\kappa\int_0^T\int_0^A\left[ke^{2s\Phi}\phi(z_x)^2
\right]_{x=0}^{x=1}dadt \Big).
\end{aligned}
\end{equation}
Using the definition of $\bar{f}$, the term $\int_{Q}\bar{f}^2
e^{2s\Phi(t,x)}dxdadt$ can be estimated in the following way
\begin{equation}\label{sd1}
\begin{aligned}
\int_Q \bar{f}^2e^{2s\Phi}dxdadt &\le
2\int_Q f^2e^{2s\Phi}dxdadt
+2\|k'\|_{L^\infty(0,1)}^2\int_Q
e^{2s\Phi}(z_x)^2dxdadt\\
& \le  2\int_Qf^2e^{2s\Phi}dxdadt
+2\|k'\|_{L^\infty(0,1)}^2c\int_Q \Theta e^{\kappa\sigma}
e^{2s\Phi}(z_x)^2dxdadt,
\end{aligned}
\end{equation}
where $c:=A^4 \max_{[0, T]}(t(T-t))^4= A^4\displaystyle\left(\frac{T}{2}\right)^8$. Thus, by \eqref{sd} and \eqref{sd1}, one has
\[
\begin{aligned}
    &\int_Q\left(s^{3}\phi^{3}z^{2}+s\phi z_{x}^{2} - 2\|k'\|_{L^\infty(0,1)}^2c\phi
   z_x^2\right)e^{2s\Phi}dxdadt\\
&\le C\left(\int_Qf^{2}e^{2s\Phi}dxdadt   -
s\kappa\int_0^T\int_0^A\left[ke^{2s\Phi}\phi(z_x)^2
\right]_{x=0}^{x=1}dadt\right).
    \end{aligned}
    \]
    Now, let $s_1>0$ be such that $\displaystyle \frac{s_1}{2} \ge 2\|k'\|_{L^\infty(0,1)}^2c$. Then, for all $s \ge s_1$
    \[
     \begin{aligned}
   & \int_Q \left(s\phi z_x^2 - 2\|k'\|_{L^\infty(0,1)}^2c \phi
   z_x^2 \right)e^{2s\Phi}dxdadt\ge \frac{s}{2} \int_Q \phi z_x^2e^{2s\Phi}dxdadt.
    \end{aligned}  \]
    Hence the claim follows for all $s \ge \max\{s_0, s_1\}$.
\end{proof}
Actually we can prove Theorem \ref{nondegenere} directly, but we have to assume on $k$ more regularity, for example $k\in C^2[0,1]$ or, at least, $k \in W^{2, \infty}(0,1)$. Indeed, in this case, we have to estimate an integral containing the term $(k \Phi_{xx})_x$.

\begin{Remark}
The previous Theorems still hold under the weaker assumption $k \in W^{1, \infty}(0,1)$ without any additional assumption. 
\\
On the other hand, if we require $k \in W^{1,1}(0,1)$ then we have to add the following hypothesis:
{\it  there exist two functions $\fg \in L^1(0,1)$,
$\fh \in W^{1,\infty}(0,1)$ and two strictly positive constants
$\fg_0$, $\fh_0$ such that $\fg(x) \ge \fg_0$ and
\[-\frac{k'(x)}{2\sqrt{k(x)}}\left(\int_x^1\fg(t) dt + \fh_0 \right)+ \sqrt{k(x)}\fg(x) =\fh(x)\quad \text{for a.e.} \; x \in [0,1],\]
in the divergence case,
\[\frac{k'(x)}{2\sqrt{k(x)}}\left(\int_x^1\fg(t) dt + \fh_0 \right)+ \sqrt{k(x)}\fg(x) =\fh(x)\quad \text{for a.e.} \; x \in [0,1],\]
in the nondivergence one.
}
\\
In this case, i.e. if $k \in W^{1,1}(0,1)$,  the function $\Psi$ in \eqref{571} becomes
\begin{equation}\label{Psi_new}
\Psi(x):= - r\left[\int_0^x
\frac{1}{\sqrt{k(t)}} \int_t^1
\fg(s) dsdt + \int_0^x \frac{\fh_0}{\sqrt{k(t)}}dt\right] -\mathfrak{c}, 
\end{equation}
where $r$ and $\mathfrak{c}$ are suitable strictly positive functions.
\end{Remark}
Thus we have the next theorem
\begin{Assumptions}\label{ipoadebole}$\ $\\
\begin{itemize}
\item[$(a_1)$] $k\in W^{1,1}(0,1)$, and there exist two functions $\fg \in L^1(0,1)$,
$\fh \in W^{1,\infty}(0,1)$ and two strictly positive constants
$\fg_0$, $\fh_0$ such that $\fg(x) \ge \fg_0$ and
\[-\frac{k'(x)}{2\sqrt{k(x)}}\left(\int_x^1\fg(t) dt + \fh_0 \right)+ \sqrt{k(x)}\fg(x) =\fh(x)\quad \text{for a.e.} \; x \in [0,1],\]
in the (Df) case,
\[\frac{k'(x)}{2\sqrt{k(x)}}\left(\int_x^1\fg(t) dt + \fh_0 \right)+ \sqrt{k(x)}\fg(x) =\fh(x)\quad \text{for a.e.} \; x \in [0,1],\]
in the (NDf) one,
or\\
\item[$(a_2)$] $k\in W^{1,\infty}(0,1)$.
\end{itemize}
\end{Assumptions}
Define $\Phi(t,a,x)$, $\phi(t,a,x)$, $ \Theta(t,a)$ and $\sigma$ as in \eqref{571} and
\begin{equation}\label{c_1NEW}
\Psi(x):=\begin{cases} \displaystyle - r\left[\int_0^x
\frac{1}{\sqrt{k(t)}} \int_t^1
\fg(s) dsdt + \int_0^x \frac{\fh_0}{\sqrt{k(t)}}dt\right] -\mathfrak{c}, &\text{ if } (a_1) \text{ holds,}\\
\displaystyle  e^{r\sigma(x)}-\mathfrak{c}, &\text{ if } (a_2) \text{
holds,}\end{cases}
\end{equation}
where $r>0$ and $\mathfrak{c} >0$ is chosen in the second case in such a way that $\max_{[0,1]}\Psi <0$.
\begin{Theorem}\label{nondegenereDebole}
Assume that Hypothesis \ref{ipoadebole} is satisfied. Let $z\in 
\mathcal{V}$ be the solution of
\eqref{adjoint} or of \eqref{adjointdiv}
where $f \in L^{2}(Q)$. Then, there exist two  strictly positive constants $C$ and $s_0$,
such that, for any $s\geq s_0$, $z$ satisfies the estimate
\begin{equation} \label{570New}
\begin{aligned}
&\int_Q \left(s\Theta (z_x)^2 + s^3 \Theta^3
 z^2\right)e^{2s\Phi}dxdadt\le C\left(\int_Q f^{2}e^{2s\Phi}dxdadt -
(B.T.)\right),
\end{aligned}
\end{equation}
where
\[
(B.T.):= \begin{cases}
sr\int_0^T\int_0^A\Theta(t)\left[\sqrt{k}\left(\int_x^1 \fg(\tau) d\tau +
\fh_0 \right)(z_x)^2 e^{2s\Phi}\right]_{x=0}^{x=1}dadt, & \text{in the (NDf)},\\
\quad\\
sr \int_0^{T}\int_0^A
\left[k^{3/2}e^{2s\Phi}\Theta \left(\int_x^1 \fg(\tau) d\tau + \fh_0
\right) (z_x)^2\right]^{x=1}_{x=0}dadt, &  \text{in the (Df)},
\end{cases}
\]
if $(a_1)$ holds and
\begin{equation} \label{570Newnondiv}
\begin{aligned}
&\int_Q \left(s\Theta e^{r\sigma} (z_x)^2 + s^3 \Theta^3
e^{3r\sigma} z^2\right)e^{2s\Phi}dxdadt\le C\left(\int_Q f^{2}e^{2s\Phi}dxdadt -
(B.T.)\right),
\end{aligned}
\end{equation}
where
$
(B.T.):=
sr\int_0^T\int_0^A\left[ke^{2s\Phi}\Theta e^{r\sigma}(z_x)^2
\right]_{x=0}^{x=1}dadt,
$
if $(a_2)$ is in force.
\end{Theorem}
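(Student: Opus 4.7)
The plan is to split the argument according to which of the two cases of Hypothesis \ref{ipoadebole} is in force. Under $(a_2)$, the function $k$ lies in $W^{1,\infty}(0,1)$, which is precisely the regularity used in the proofs of Theorems \ref{nondegenere} and \ref{nondegenerediv}, and the weight $\Psi(x) = e^{r\sigma(x)} - \fc$ in \eqref{c_1NEW} has the same structural shape as the one in \eqref{571} (an exponential in $\sigma$ shifted by a negative constant so that $\max_{[0,1]}\Psi < 0$). The proofs of \eqref{570} and \eqref{570'} therefore transfer line by line, with $r$ in place of $\kappa$ and $\fc$ in place of $e^{2\kappa\|\sigma\|_\infty}$, yielding \eqref{570Newnondiv} for both the (Df) and (NDf) equations with no new ingredients.

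Under $(a_1)$ the weight is genuinely different and I would run the Carleman machinery directly. Setting $w := e^{s\Phi} z$ and $P_s w := e^{s\Phi} L(e^{-s\Phi} w)$, where $L$ is the operator on the left-hand side of \eqref{adjoint} or \eqref{adjointdiv}, one splits $P_s = P_s^+ + P_s^-$ into its formally self-adjoint and anti-self-adjoint parts in the natural inner product, namely $L^2(Q)$ in the (Df) case and $L^2_{1/k}(Q)$ in the (NDf) case. Expanding the Carleman identity
\[
\|P_s^+ w\|^2 + \|P_s^- w\|^2 + 2\langle P_s^+ w, P_s^- w\rangle = \|e^{s\Phi} f\|^2
\]
and integrating the cross term by parts in $t$, $a$ and $x$ produces distributed positive contributions, which should dominate the left-hand side of \eqref{570New}, plus boundary contributions, which should reorganise into the stated $(B.T.)$.

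The algebraic point that allows $W^{1,1}$ regularity to suffice is the choice of $\Psi$ in \eqref{c_1NEW}: a direct computation gives $\Psi'(x) = -r(\int_x^1 \fg(s)\,ds + \fh_0)/\sqrt{k(x)}$ and hence, by the structural identity imposed in $(a_1)$,
\[
(k\Psi')' = r\fh \quad \text{in the (Df) case,} \qquad k\Psi'' = r\fh \quad \text{in the (NDf) case.}
\]
Since $\fh \in W^{1,\infty}(0,1)$, the terms of the Carleman identity that would otherwise require a second derivative of $k$ are harmless. Moreover, the distributed coefficient of $(z_x)^2$ collapses to a positive multiple of $s\Theta\sqrt{k}\,\fg(x) \ge c s\Theta$ (using $\fg \ge \fg_0 > 0$ together with the strict positivity of $\min_{[0,1]}\sqrt{k}$), and the distributed coefficient of $z^2$ is bounded below by $c s^3 \Theta^3$ at leading order, absorbing all lower-order terms once $s$ is large.

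The main obstacle will be the careful book-keeping of the many terms produced by the integrations by parts, and verifying that, after grouping, they split cleanly into a positive distributed part bounded below by the left-hand side of \eqref{570New} and a boundary part matching $(B.T.)$. The time and age boundaries at $t=0,T$ and $a=0$ contribute nothing because $\Theta$, and hence $e^{2s\Phi}$, vanish to infinite order there, while the $a=A$ boundary is killed by $z(t,A,x)=0$; at $x=0,1$ the Dirichlet condition on $z$ kills all $z$-traces, leaving only $(z_x)^2$ boundary terms which combine with the $k$-weighted factors coming from $\Psi'$ to reproduce exactly the stated $(B.T.)$ in each of the (Df) and (NDf) cases. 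A density argument approximating $z \in \mathcal V$ by smoother functions justifies the integration by parts when $k$ is only $W^{1,1}$.
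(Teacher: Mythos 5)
Your proposal follows essentially the same route as the paper: the paper's own proof is only a sketch reducing the statement to the argument of Theorem \ref{nondegenerediv} (and to \cite{fm1} for the $W^{1,1}$ case), i.e.\ the same $w=e^{s\Phi}z$, $P_s^{\pm}$ decomposition with distributed/boundary bookkeeping that you outline, and your key computation that the structural identity in $(a_1)$ yields $(k\Psi')'=r\fh$ in the (Df) case and $k\Psi''=r\fh$ in the (NDf) case (hence $2k\Psi''+k'\Psi'=2r\sqrt{k}\,\fg$) is exactly the point that makes the low regularity of $k$ harmless. The only difference is the justification of the integrations by parts when $k\in W^{1,1}$, which the paper delegates to the approximation procedure of \cite{fm1} rather than to a density argument in $z$; this does not change the substance of the proof.
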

(See the Appendix for the proof.)

\paragraph{Carleman inequalities when the degeneracy is at the boundary}\label{sec-3-1}

In this subsection we will consider the case when $k(0)=0$ or $k(1)=0$. In both cases we assume that $\mu$ satisfies \eqref{3}. On the other hand, on $k$ we make different assumptions:
\begin{Assumptions}\label{BAss01} The function
$k\in C^0[0,1]\bigcap C^2(0,1]$  is such that $k(0)=0$, $k>0$ on
$(0,1]$ and there exist $\varepsilon\in (0,1]$ and $M\in (0,2)$ such that
the function $\displaystyle\frac{xk_x}{k(x)}$
$\in \:L^{\infty}(0,\varepsilon)$,
\small$\displaystyle\frac {xk_x(x)}{k(x)} \le M $ and
$\displaystyle \left( \frac{xk_x(x)}{k(x)}\right)_{x} \in L^{\infty}(0,\varepsilon)$.
\end{Assumptions}
\begin{Assumptions}\label{BAss02} The function
$k\in C^0[0,1]\bigcap C^2[0,1)$  is such that $k(1)=0$, $k>0$ on
$(0,1)$ and there exist $\varepsilon\in (0,1]$ and $M\in (0,2)$ such that
the function \small$\displaystyle\frac{(x-1)k_x}{k(x)}$\normalsize\:
$\in L^{\infty}(1-\varepsilon,1)$,
 \small$\displaystyle\frac {(x-1)k_x(x)}{k(x)} \le M $
and $\displaystyle\left( \frac{(x-1)k_x(x)}{k(x)}\right)_{x}\in L^{\infty}(1-\varepsilon,1)$.
\end{Assumptions}

Now, let us introduce the weight functions

\begin{equation}\label{13}
\varphi(t,a,x):=\Theta(t,a)(p(x) - 2 \|p\|_{L^\infty(0,1)}),
\end{equation}
and
\begin{equation}\label{13'}
\bar\varphi(t,a,x):=\Theta(t,a)(\bar p(x) - 2 \|\bar p\|_{L^\infty(0,1)}),
\end{equation}
where $\Theta$ is as in \eqref{571},
$\displaystyle p(x):=\int_0^x\frac{y}{k(y)}~e^{Ry^2}dy$ and $
\displaystyle \bar p(x):=\int_{0}^x\frac{y-1}{k(y)}~e^{R(y-1)^2}dy$, with  $R>0,$ if $k$ satisfies Hypothesis \ref{BAss01} or Hypothesis \ref{BAss02}, respectively.
Observe that $ \varphi(t,a, x) ,  \bar\varphi(t,a, x) <0$ for all $(t,x) \in Q$ and
$\varphi(t,a, x), \bar  \varphi(t,a, x)  \rightarrow - \infty \, \text{ as } t \rightarrow
0^+, T^-$  or  $a \rightarrow
0^+$.  The following estimates hold:

\begin{Theorem}\label{Cor1}
Assume that Hypothesis $\ref{BAss01}$ is satisfied for some
$\varepsilon\in (0,1]$. Then,
there exist two strictly positive constants $C$ and $s_0$ such that every
solution $v$ of \eqref{adjoint} in
\[
\mathcal{V}_1:=L^2\big(Q_{T,A}; H^2_{{\frac{1}{k}}}(0,1)\big) \cap H^1\big(0, T; H^1(0,A;H^1_{{\frac{1}{k}}}(0,1))\big)\]
satisfies, for all $s \ge s_0$,
\[
\begin{aligned}
\int_{Q}\left(s \Theta v_x^2
                + s^3\Theta^3\text{\small$\displaystyle\Big(\frac{x}{k}\Big)^2$\normalsize}
                  v^2\right)e^{2s\varphi}dxdadt
&\le
C\int_{Q}f^{2}\text{\small$\frac{e^{2s\varphi}}{k}$\normalsize}dxdadt
\\&+ s\,C\int_0^T \int_0^A\Theta(t,a)\Big[ x v_x^2 e^{2s\varphi}\Big](t, a, 1)dadt.
\end{aligned}\]
\end{Theorem}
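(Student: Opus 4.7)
The plan is to apply the classical Carleman method in its conjugated form, working in the natural weighted Hilbert space $L^2_{1/k}(Q)$, in which $k\partial_{xx}$ with Dirichlet conditions is self-adjoint, and to treat the age variable $a$ symmetrically to the time variable $t$ (both produce antisymmetric first-order terms, and the weight $\Theta(t,a)$ blows up at $t=0,T$ and $a=0$).

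I would first set $w:=e^{s\varphi}v$. Thanks to the choice of weight $\varphi$ in \eqref{13} ($\varphi\to-\infty$ at $t=0,T$ and at $a=0$), combined with the boundary conditions in \eqref{adjoint}, the function $w$ vanishes on the entire parabolic boundary in $t,a,x$. Denoting $Lv:=v_t+v_a+kv_{xx}$ and expanding $e^{s\varphi}Lv$ in terms of $w$, I would split the conjugated operator as $e^{s\varphi}Lv=L_s^+w+L_s^-w$ with
\[
L_s^+ w := kw_{xx}+s^2 k\varphi_x^2\, w - s(\varphi_t+\varphi_a)w, \qquad L_s^- w := w_t+w_a - 2sk\varphi_x\, w_x - sk\varphi_{xx}\, w,
\]
so that $L_s^+$ is formally self-adjoint and $L_s^-$ formally antisymmetric in $L^2_{1/k}(Q)$. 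The identity
\[
\|L_s^+w\|^2_{L^2_{1/k}(Q)}+\|L_s^-w\|^2_{L^2_{1/k}(Q)}+2\langle L_s^+w,L_s^-w\rangle_{L^2_{1/k}(Q)}=\|e^{s\varphi}(f+\mu v)\|^2_{L^2_{1/k}(Q)}
\]
then reduces the problem to the expansion of the cross term.

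The core computation consists of repeated integration by parts in $x$, $t$, $a$ in $\langle L_s^+w,L_s^-w\rangle$. The essential weight identities are
\[
k\varphi_x=\Theta\,x\,e^{Rx^2},\quad (k\varphi_x)_x=\Theta(1+2Rx^2)e^{Rx^2},\quad k\varphi_{xx}=\Theta\,e^{Rx^2}\!\Bigl(1+2Rx^2-\tfrac{xk_x}{k}\Bigr).
\]
Under Hypothesis~\ref{BAss01} (boundedness of $xk_x/k$ near $0$ with upper bound $M<2$, and boundedness of its derivative), for $R$ sufficiently large the leading distributed terms produced by the expansion dominate and yield lower bounds by $c\,s^3\Theta^3(x/k)^2 w^2$ and $c\,s\,\Theta\, w_x^2$ in the $L^2_{1/k}$ norm; the subordinate contributions (including $\mu v$, $\varphi_t+\varphi_a$ and the factors of order at most $\Theta^2$) are absorbed into the $s^3\Theta^3$ term for $s$ large. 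Returning from $w$ to $v$ via $v_x=e^{-s\varphi}(w_x-s\varphi_x w)$ then produces the left-hand side of the claimed estimate.

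The boundary traces must be tracked carefully. All traces at the $t$- and $a$-endpoints vanish by the decay of $\Theta$ together with $z(t,A,\cdot)=0$. The traces at the degenerate endpoint $x=0$ contain factors like $k(x)w_x^2$ or $(k\varphi_x)w_x^2$ and they vanish because $v(\cdot,\cdot,0)=0$ together with the Hardy-type property $\lim_{x\to 0^+}k(x)v_x^2(t,a,x)=0$, which holds for $v\in\mathcal{V}_1$ under Hypothesis~\ref{BAss01}. At $x=1$ the only surviving trace is proportional to $s\int_0^T\!\int_0^A\Theta(t,a)\bigl[x\,v_x^2 e^{2s\varphi}\bigr](t,a,1)\,da\,dt$, which is exactly the boundary term appearing on the right-hand side of the estimate. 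The main obstacle is the combinatorial bookkeeping of the cross-term expansion and, above all, verifying the vanishing (or the correct sign) of the degenerate boundary traces at $x=0$; this is the technical crux and relies precisely on the structural conditions on $xk_x/k$ in Hypothesis~\ref{BAss01}, in the spirit of \cite{cfr}, \cite{JDE} and \cite{fm1}.
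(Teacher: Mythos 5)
Your proposal takes essentially the same route as the paper's proof: the same conjugation $w=e^{s\varphi}v$, the identical splitting into $L_s^+w=kw_{xx}-s(\varphi_t+\varphi_a)w+s^2k\varphi_x^2w$ and $L_s^-w=w_t+w_a-2sk\varphi_xw_x-sk\varphi_{xx}w$ in $L^2_{\frac{1}{k}}(Q)$, the cross-term expansion with the distributed terms bounded below by $s\Theta w_x^2$ and $s^3\Theta^3(x/k)^2w^2$, and the boundary analysis in which all traces vanish except the one at $x=1$. Two minor points of comparison: the paper first proves the case $\mu\equiv 0$ and then absorbs $\mu v$ by applying the Hardy--Poincar\'e inequality to $e^{s\varphi}v$ (this is the precise mechanism behind your claim that $\mu v$ is ``absorbed'', since $1/k$ is not pointwise dominated by $(x/k)^2$), and no largeness of $R$ is required, because the key coefficient satisfies $2-\frac{xk_x}{k}+4Rx^2\ge 2-M>0$ for every $R>0$.
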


\begin{Theorem}\label{Cor1'}
Assume that Hypothesis $\ref{BAss02}$ is satisfied for some
$\varepsilon\in (0,1]$. Then,
there exist two strictly  positive constants $C$ and $s_0$ such that every
solution $v$ of \eqref{adjoint} in $\mathcal V_1$
satisfies, for all $s \ge s_0$,
\[
\begin{aligned}
\int_{Q}\left(s\Theta v_x^2
                  + s^3 \Theta^3\text{\small$\displaystyle\Big(\frac{x-1}{k}\Big)^2$\normalsize}
                    v^2\right)e^{2s\bar\varphi}dxdadt
&\le
C\int_{Q}f^{2}\text{\small$\frac{~e^{2s\bar \varphi}}{k}$\normalsize}dxdadt
\\&+sC \int_0^T\int_0^A\!\!\! \Theta(t,a) \Big[ (1-x) v_x^2 e^{2s\bar\varphi}\Big](t, a,0)dadt.
\end{aligned}\]
\end{Theorem}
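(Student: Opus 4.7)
The argument mirrors the proof of Theorem \ref{Cor1} under the symmetry $x\mapsto 1-x$: the weight $\bar p$ and the quantity $(x-1)$ play here the roles of $p$ and $x$ in Theorem \ref{Cor1}, and every boundary computation at the degenerate point $x=0$ is shifted to $x=1$.

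First I would introduce the conjugated function $w := e^{s\bar\varphi} v$. It inherits the Dirichlet conditions $w(t,a,0)=w(t,a,1)=0$ and $w(t,A,x)=0$ from $v$, while the blow-up of $\Theta(t,a)$ at $t=0,T$ and $a=0$ makes the weight $e^{s\bar\varphi}$ (together with suitable derivatives of $w$) decay there. Rewriting the adjoint equation \eqref{adjoint} in terms of $w$ produces an equation $P_s w = e^{s\bar\varphi} f$ that decomposes, up to lower-order terms absorbable by Cauchy--Schwarz, as $P_s = P_s^+ + P_s^-$, with $P_s^+$ formally self-adjoint and $P_s^-$ antisymmetric. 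The standard identity
\[
\|P_s^+ w\|^2_{L^2(Q)} + \|P_s^- w\|^2_{L^2(Q)} + 2\langle P_s^+ w,\, P_s^- w\rangle_{L^2(Q)} = \|e^{s\bar\varphi}f\|^2_{L^2(Q)}
\]
is then the starting point.

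The heart of the proof is the explicit expansion of $\langle P_s^+ w, P_s^- w\rangle_{L^2(Q)}$ via repeated integration by parts in $t$, $a$, and $x$. I expect this to yield a positive interior contribution comparable to $\int_{Q}(s\Theta w_x^2 + s^3\Theta^3((x-1)/k)^2 w^2)\,dxdadt$, together with boundary terms. The temporal boundaries at $t=0,T$ and $a=0$ vanish via the decay of $e^{2s\bar\varphi}$; the age boundary at $a=A$ vanishes thanks to $v(t,A,x)=0$; the space boundary at $x=0$ produces exactly the right-hand side boundary term in the claimed estimate; and the contribution at $x=1$ must be shown to have the correct sign so as to be absorbed. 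Reverting to $v = e^{-s\bar\varphi} w$ then yields the desired inequality.

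The principal difficulty, as in Theorem \ref{Cor1}, is the boundary analysis at the degenerate point $x=1$. One has $k(x)\bar\varphi_x(t,a,x) = \Theta(t,a)(x-1)e^{R(x-1)^2}\to 0$ as $x\to 1^-$, and the cross-product must produce a boundary contribution of the correct sign. This is precisely where the structural hypothesis $(x-1)k_x/k\le M<2$ from Hypothesis \ref{BAss02} is indispensable; moreover the Gaussian factor $e^{R(x-1)^2}$ in $\bar p$, with $R$ chosen sufficiently large, supplies the extra positivity needed to dominate the harmful boundary terms. The remainder of the argument is a line-by-line translation of the proof of Theorem \ref{Cor1} from $x=0$ to $x=1$.
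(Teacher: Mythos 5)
Your proposal follows exactly the paper's route: the paper proves Theorem \ref{Cor1} in full (conjugation $w=e^{s\varphi}v$, the splitting $L_s=L_s^++L_s^-$, Lemmas \ref{lemma1}--\ref{lemma2}, then Hardy--Poincar\'e to reinstate $\mu\not\equiv0$), and it states that Theorem \ref{Cor1'} is obtained by the same computations with $\bar p$, $\bar\varphi$ and the endpoints $x=0$, $x=1$ exchanged, which is precisely what you describe. Two precisions so that your sketch matches what actually makes the argument work: the fundamental identity and all scalar products must be taken in the weighted space $L^2_{\frac{1}{k}}(Q)$, not in $L^2(Q)$ --- only with the weight $\frac{1}{k}$ is $kw_{xx}$ symmetric, does the source term come out as $\int_Q f^2 e^{2s\bar\varphi}/k$, and do the distributed terms carry the factor $2-\frac{(x-1)k_x}{k}+4R(x-1)^2\ge 2-M>0$, which is exactly where Hypothesis \ref{BAss02} enters (any $R>0$ works; it need not be chosen large). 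Moreover, at the degenerate endpoint $x=1$ the boundary terms are not merely of a favourable sign: they are shown to vanish, via the analogue of Lemma \ref{rem} (monotonicity of $(1-x)^\gamma/k$) together with the Hardy-type pointwise bounds used in Lemma \ref{BT}, while it is the nondegenerate endpoint $x=0$ that produces the observation term $s\int_0^T\int_0^A\Theta\big[(1-x)v_x^2e^{2s\bar\varphi}\big](t,a,0)\,dadt$ on the right-hand side.
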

Clearly the previous  Carleman estimates hold
for every function $v$ that satisfies \eqref{adjoint} in $(0,T)\times(0,A)\times (0,B)$ or $(0,T)\times(0,A)\times(B,1)$ as long as $(0,1)$ is substituted by $(0,B)$ or $(B,1)$ and $k$ satisfies Hypothesis \ref{BAss01} in $(0,B)$ or Hypothesis \ref{BAss02} in $(B,1)$, respectively.

\begin{Remark}
Observe that Theorems \ref{Cor1} and \ref{Cor1'} improve \cite[Theorems 3.3. and 3.4]{cfr} and \cite[Theorem 3 and 4]{cfr1}. Indeed,  here we assume that $k$ is of class $C^2(0,1]$ (or $C^2[0,1)$) and not $C^3(0,1]$ (or $C^3[0,1)$) as therein, where $y$ was independent of $a$.
\end{Remark}
\vspace{0.4cm}

In the following, we will prove only Theorem  \ref{Cor1} since the proof of Theorem
\ref{Cor1'} is analogous.

\paragraph{Proof of Theorem \ref{Cor1}}
As a first step assume that $\mu \equiv 0$.

In order to prove Theorem \ref{Cor1},  we define, for
$s > 0$, the function
\[
w(t,a,x) := e^{s \varphi (t,a,x)}v(t,a,x)
\]
where $v$ is the solution of \eqref{adjoint} in $\mathcal{V}_1$; observe that,
since $v\in\mathcal{V}_1$, $w\in\mathcal{V}_1$. Clearly, one has that $w$ satisfies
\begin{equation}\label{1'}
\begin{cases}
(e^{-s\varphi}w)_t + (e^{-s\varphi}w)_a +k(x)(e^{-s\varphi}w)_{xx}  =f(t,a,x), & (t,x) \in
Q,
\\[5pt]
w(0, a, x)= w(T,a, x)= 0, & (a,x) \in Q_{A,1},
\\[5pt]
w(t,A,x)=w(t,0,x)=0, & (t,x) \in  Q_{T,1},
\\[5pt]
w(t, a,0)= w(t, a, 1)= 0, & (t,a) \in  Q_{T,A}.
\end{cases}
\end{equation}
Defining $Lw:= w_t + w_a+kw_{xx}$ and $L_sw:=
e^{s\varphi}L(e^{-s\varphi}w)$, the equation of \eqref{1'} can be
recast as follows
\begin{center}
$
L_sw =L^+_sw + L^-_sw=e^{s\varphi}f,
$
\end{center}
where
\[
\begin{cases}
L^+_sw := kw_{xx}
 - s (\varphi_t+ \varphi_a) w + s^2k \varphi_x^2 w,
\\[5pt]
L^-_sw := w_t + w_a-2sk\varphi_x w_x -
 sk\varphi_{xx}w.
 \end{cases}
\]
Moreover, set $<u, v>_{L^2_{\frac{1}{k}}(Q)} \::=\;
$\small$\displaystyle\int_{Q}uv\frac{1}{k}dxdadt$\normalsize,
\;one has
\begin{equation} \label{Dis1}
\|L^+_sw \|^2_{L^2_{\frac{1}{k}}(Q)} + \:\|L^-_sw\|^2_{L^2_{\frac{1}{k}}(Q)} +
2<L^+_sw, L^-_sw>_{L^2_{\frac{1}{k}}(Q)} =\;
\|fe^{s\varphi}\|^2_{L^2_{\frac{1}{k}}(Q)}.
\end{equation}
Now, we compute the inner product  $<L^+_sw, L^-_sw>_{L^2_{\frac{1}{k}}(Q)}$ whose first expression is given in the following lemma
\begin{Lemma}\label{lemma1}Assume Hypothesis $\ref{BAss01}$.
The following identity holds
\begin{equation}\label{D&BT}
\left.
\begin{aligned}
<L^+_sw,L^-_sw>_{L^2_{\frac{1}{k}}(Q)}
\;&=\;
s\int_{Q}(k\varphi_{xx}+(k\varphi_x)_x)w_x^2dxdadt
\\[3pt]&+ s^3 \int_{Q}\varphi_x^2(k\varphi_{xx}+(k\varphi_x)_x)w^2dxdadt
\\[3pt]
&
-2 s^2\int_{Q} \varphi_x\varphi_{xt}w^2dxdadt
 +\frac{s}{2}\int_{Q} \frac{\varphi_{tt}+ \varphi_{aa}}{k}w^2dxdadt
\\[3pt] &+s\int_{Q_T}(k\varphi_{xx})_{x}ww_x dxdadt\\
&
+s\int_{Q} \frac{\varphi_{ta}}{k}w^2dxdadt
-2 s^2\int_{Q} \varphi_x\varphi_{xa}w^2dxdadt
\end{aligned}\right\}\;\text{\{D.T.\}}
\end{equation}
\begin{equation}\nonumber
\hspace{55pt}
\text{\{B.T.\}}\;\left\{
\begin{aligned}
&
-\frac{1}{2}\int_0^A\int_0^{1}\Big[w_x^2\Big]_0^Tdxda
+\int_0^T\int_0^A\Big[w_x(w_t+ w_a)\Big]_0^{1}dadt
\\[3pt]&
- s \int_0^T\int_0^A\Big[ k \varphi_xw_x^2\Big]_0^{1}dadt\\[3pt]&
- s \int_0^T\int_0^A\!\!\Big[ k\varphi_{xx}ww_x\Big]_0^{1}dadt
+\frac{1}{2}\int_0^A \int_{0}^{1}\!\!\Big[ (s^2 \varphi_x^2 - s \frac{\varphi_t+\varphi_a}{k})w^2\Big]_0^Tdxda
\\[3pt]
&
- s \int_0^T\int_0^A\Big[  (s^2 k\varphi_x^3- s\varphi_x\varphi_t- s\varphi_x\varphi_a)w^2\Big]_0^{1}dadt\\
&
 -\frac{1}{2}\int_0^T\int_0^{1} \big[w_x^2\big]_0^Adxdt
 +\frac{1}{2}\int_0^T\int_0^{1}\big[ \big(s^2 \varphi_x^2 - s \frac{\varphi_t+\varphi_a}{k} \big)w^2\big]_0^Adxdt.
\end{aligned}\right.
\end{equation}
\end{Lemma}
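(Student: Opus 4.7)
}
The plan is to expand the inner product $\langle L_s^+ w, L_s^- w\rangle_{L^2_{1/k}(Q)}$ into a double sum of twelve bilinear cross-products (three terms of $L_s^+ w$ against four terms of $L_s^- w$), each weighted by $1/k$, and then to integrate by parts in $x$, $t$, or $a$ term by term, collecting the resulting bulk contributions into the ``D.T.'' bracket and the surface contributions into the ``B.T.'' bracket. Because $w$ satisfies the same homogeneous boundary data as $v$ (namely $w=0$ at $x=0,1$, at $t=0,T$ and at $a=0,A$, together with $w_t,w_a\in L^2$), all manipulations are justified and many boundary contributions that appear in intermediate steps will, at a later stage, drop out; here, however, we carry them all along.

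The twelve cross-products split naturally into four groups and I would treat each group with the same recipe. First, $\int_Q w_{xx}\,w_t$ and $\int_Q w_{xx}\,w_a$: integrate by parts in $x$ to obtain $-\frac12\int_Q (w_x^2)_t$ and $-\frac12\int_Q (w_x^2)_a$ plus the surface contributions $\int_0^T\!\int_0^A [w_x(w_t+w_a)]_0^1$, which, after integrating in $t$ or $a$, yield the terms $-\frac12\int_0^A\!\int_0^1[w_x^2]_0^T$ and $-\frac12\int_0^T\!\int_0^1[w_x^2]_0^A$ in the B.T.~list. Second, for the $s$-level terms $-2s\int_Q k\varphi_x w_{xx}w_x\,\frac{dx}{k}$ and $-s\int_Q k\varphi_{xx} w\, w_{xx}\,\frac{dx}{k}$, write the former as $-s\int_Q \varphi_x(w_x^2)_x$ and integrate by parts in $x$ to produce $+s\int_Q (k\varphi_x)_x w_x^2/k$ and a B.T.; integrate the latter by parts in $x$ twice (the usual ``transfer $w_{xx}$ off'' trick) to obtain $+s\int_Q k\varphi_{xx}w_x^2/k$, the $(k\varphi_{xx})_x w w_x$ distributed term, and boundary terms of the form $-s[k\varphi_{xx}ww_x]_0^1$.

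Third, for the zeroth-order-in-$x$ products coming from $-s(\varphi_t+\varphi_a)w$ and from $s^2k\varphi_x^2 w$ paired against $w_t$ and $w_a$, rewrite $w\,w_t=\frac12(w^2)_t$ and $w\,w_a=\frac12(w^2)_a$, then integrate by parts in $t$ or $a$ and pick up the derivatives of the coefficient: this delivers the distributed terms $+\frac{s}{2}\int_Q \frac{\varphi_{tt}+\varphi_{aa}}{k}w^2$, $+s\int_Q \frac{\varphi_{ta}}{k}w^2$, $-2s^2\int_Q \varphi_x\varphi_{xt}w^2$, $-2s^2\int_Q \varphi_x\varphi_{xa}w^2$, together with B.T.~of the form $\frac12\int[s^2\varphi_x^2-s\frac{\varphi_t+\varphi_a}{k}]w^2$ evaluated at the $t$- or $a$-endpoints. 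Fourth, for the two cubic-in-$s$ cross-products $-2s^3\int_Q k\varphi_x^3 ww_x\,\frac{dx}{k}$ and $-s^3\int_Q k\varphi_x^2\varphi_{xx}w^2$, rewrite the first as $-s^3\int_Q \varphi_x^3(w^2)_x$, integrate by parts in $x$ and add to the second: the algebraic identity $(k\varphi_x^3)_x - k\varphi_x^2\varphi_{xx} = \varphi_x^2\bigl[(k\varphi_x)_x + k\varphi_{xx}\bigr]$ consolidates them precisely into the displayed $s^3$ distributed term and the matching B.T.~of the form $-s^3[k\varphi_x^3 w^2]_0^1$ (which reassembles into the compound boundary contribution in the lemma).

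The main obstacle is purely bookkeeping: tracking every sign, keeping straight the three families of boundary terms (in $t$, $a$, $x$), and noticing the key cancellation in the fourth group that produces the symmetric coefficient $(k\varphi_x)_x+k\varphi_{xx}$ common to the $s$- and $s^3$-level D.T. A minor additional point is to verify that all integrands are in $L^1(Q)$ under Hypothesis \ref{BAss01}: this follows because the degeneracy of $k$ at $x=0$ is compensated by the exponential weight $e^{2s\varphi}$ embedded in $w=e^{s\varphi}v$ and by the regularity of $v\in\V1$, so every IBP is legitimate and no boundary contribution at $x=0$ is lost in the process.
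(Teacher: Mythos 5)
Your overall strategy --- expand $\langle L^+_sw,L^-_sw\rangle_{L^2_{1/k}(Q)}$ into the bilinear cross-products and integrate by parts term by term in $x$, $t$, $a$ --- is the same direct computation the paper performs (the paper merely organizes it as $I_1+I_2+I_3+I_4$, importing the age-independent block $I_1+I_2$ from \cite{cfr}, \cite{cfr1} and computing the new $a$-related pieces $I_3,I_4$ explicitly). However, your enumeration is incomplete in a way that matters: you announce twelve cross-products but your four groups only account for ten of them. The two you never treat are the $s^2$-level products of $-s(\varphi_t+\varphi_a)w$ (from $L^+_s$) against $-2sk\varphi_xw_x$ and against $-sk\varphi_{xx}w$ (from $L^-_s$). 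After the $1/k$ weighting these are $s^2(\varphi_t+\varphi_a)\varphi_x(w^2)_x$ and $s^2(\varphi_t+\varphi_a)\varphi_{xx}w^2$; an integration by parts in $x$ and the cancellation of the $(\varphi_t+\varphi_a)\varphi_{xx}$ pieces yield exactly $-s^2\int_Q\varphi_x\varphi_{xt}w^2-s^2\int_Q\varphi_x\varphi_{xa}w^2$ plus the boundary contribution $s^2\int_0^T\!\int_0^A\big[\varphi_x(\varphi_t+\varphi_a)w^2\big]_0^1dadt$. Without them, your group 3 (which pairs $s^2k\varphi_x^2w$ with $w_t$ and $w_a$ only) produces the coefficients $-s^2$, not the $-2s^2$ appearing in \eqref{D&BT}, and the portion $s\varphi_x\varphi_t+s\varphi_x\varphi_a$ inside the compound $x$-boundary term of the lemma is left unaccounted for. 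So, as written, the plan cannot reproduce the stated identity; the fix is mechanical (apply your own recipe to the two omitted products), but it must be made.

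A secondary point is the bookkeeping of the weight $1/k$: several of your intermediate expressions carry an extra or a missing factor of $k$ (e.g. the term produced by $-2sk\varphi_xw_{xx}w_x$ after weighting is $-sk\varphi_x(w_x^2)_x$, whose integration by parts gives $s\int_Q(k\varphi_x)_xw_x^2$ with no residual $1/k$, and the cubic products are $-s^3k\varphi_x^3(w^2)_x$ and $-s^3k\varphi_x^2\varphi_{xx}w^2$, consistent with your identity $(k\varphi_x^3)_x-k\varphi_x^2\varphi_{xx}=\varphi_x^2\big[(k\varphi_x)_x+k\varphi_{xx}\big]$). These are slips rather than conceptual errors, but in a lemma whose entire content is the exact form of the distributed and boundary terms they need to be tracked precisely; the justification of the boundary evaluations at the degenerate endpoint is, as in the paper, deferred to the subsequent lemma and is not an issue here.
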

\vspace{5pt}
\begin{proof}
It results, integrating by parts,
\[
<L^+_sw,L^-_sw>_{L^2_{\frac{1}{k}}(Q)}= I_1+ I_2+ I_3+ I_4,
\]
where
\[
I_1= \int_{Q}w_{xx}(w_t -2sk\varphi_x w_x -
 sk\varphi_{xx}w) dxdadt,
\]
\[
I_2=\int_{Q}\frac{1}{k}\big(- s \varphi_t w
  +s^2k\varphi_x^2 w\big)(w_t -2sk\varphi_x w_x -
 sk\varphi_{xx}w) dxdadt,
\]
\[
I_3= \int_{Q}(w_{xx}-s \frac{(\varphi_t +\varphi_a) }{k} w+ s^2\varphi_x^2w)w_a dxdadt
\]
and
\[
I_4= -s\int_{Q}\frac{\varphi_a w}{k}  (w_t -2sk\varphi_x w_x -
 sk\varphi_{xx}w) dxdadt.
\]
By several integrations by parts in space and in time (see \cite{cfr} or \cite{cfr1}), we get
\begin{equation}\label{!1}
\begin{aligned}
I_1 + I_2&= s\int_{Q}(k\varphi_{xx}+(k\varphi_x)_x)w_x^2dxdadt
\\[3pt]&+ s^3 \int_{Q}\varphi_x^2(k\varphi_{xx}+(k\varphi_x)_x)w^2dxdadt
\\[3pt]
&
-2 s^2\int_{Q} \varphi_x\varphi_{xt}w^2dxdadt
 +\frac{s}{2}\int_{Q} \frac{\varphi_{tt}}{k}w^2dxdadt
\\[3pt] &+ s\int_{Q_T}(k\varphi_{xx})_{x}ww_x dxdadt
\\[3pt]&
-\frac{1}{2}\int_0^A\int_0^{1}\Big[w_x^2\Big]_0^Tdxda
+\int_0^T\int_0^A\Big[w_xw_t\Big]_0^{1}dadt
\\[3pt]&
- s \int_0^T\int_0^A\Big[ a \varphi_xw_x^2\Big]_0^{1}dadt\\[3pt]&
- s \int_0^T\int_0^A\!\!\Big[ a\varphi_{xx}ww_x\Big]_0^{1}dadt
+\frac{1}{2}\int_0^A \int_{0}^{1}\!\!\Big[ (s^2 \varphi_x^2 - s \frac{\varphi_t}{a})w^2\Big]_0^Tdxda
\\[3pt]
&
- s \int_0^T\int_0^A\Big[  (s^2 a \varphi_x^3- s\varphi_x\varphi_t)w^2\Big]_0^{1}dadt.
\end{aligned}
\end{equation}
Next, we compute $I_3$ and $I_4$
\begin{equation}\label{!}
\begin{aligned}
I_3=&
- \int_{Q} w_{xa} w_x dxdadt
+ \int_0^T\int_0^A\big[ w_xw_a\big]_0^{1} dadt
\\[3pt]
&
+\int_{Q}\big(s^2\varphi_x^2  - s \frac{\varphi_t+ \varphi_a}{k}\big)w w_adxdadt
\\[3pt]
& = -\frac{1}{2}\int_0^T\int_0^{1} \big[w_x^2\big]_0^Adxdt
+\int_0^T\int_0^A\big[w_xw_a\big]_0^{1}dadt  +\frac{1}{2}\int_0^T\int_0^{1}\big[ \big(s^2 \varphi_x^2 - s \frac{\varphi_t+\varphi_a}{a} \big)w^2\big]_0^Adxdt
\\[3pt]
&
+
\frac{1}{2} \int_{Q} \left(-s^2\varphi_x^2 + s \frac{\varphi_t+ \varphi_a}{k}\right)_aw^2dxdadt
\\[3pt]
&=-\frac{1}{2}\int_0^T\int_0^{1} \big[w_x^2\big]_0^Adxdt
+\int_0^T\int_0^A\big[w_xw_a\big]_0^{1}dadt +\frac{1}{2}\int_0^T\int_0^{1}\big[ \big(s^2 \varphi_x^2 - s \frac{\varphi_t+\varphi_a}{a} \big)w^2\big]_0^Adxdt
\\[3pt]
&
+
\frac{s}{2} \int_{Q} \frac{\varphi_{aa}}{k}w^2dxdadt+
\frac{s}{2} \int_{Q} \frac{\varphi_{ta}}{k}w^2dxdadt
- s^2\int_{Q} \varphi_x\varphi_{xa}w^2dxdadt.
\end{aligned}
\end{equation}
On the other hand
\begin{equation}\label{!2}
\begin{aligned}
I_4&= -s\int_{Q}\frac{\varphi_a ww_t}{k} dxdadt+ 2s^2\int_{Q}\varphi_x\varphi_a w w_x dxdadt+
 s^2\int_{Q}\varphi_a\varphi_{xx}w^2 dxdadt\\
 &=-\frac{s}{2}\int_{Q}\frac{\varphi_a }{k}(w^2)_t dxdadt+ s^2\int_{Q}\varphi_x\varphi_a (w^2)_xdxdadt+
 s^2\int_{Q}\varphi_a\varphi_{xx}w^2 dxdadt\\
 & =\frac{s}{2}\int_{Q}\frac{\varphi_{at} }{k}w^2 dxdadt- s^2\int_{Q}(\varphi_x\varphi_a)_xw^2dxdadt+
 s^2\int_{Q}\varphi_a\varphi_{xx}w^2 dxdadt\\
 &-\frac{s}{2}\int_0^A\int_0^{1} \left[\frac{\varphi_a }{k}w^2\right]_0^T dxda+ s^2 \int_0^T\int_0^A\big[\varphi_x\varphi_aw^2\big]_0^{1}dadt \\
 & =\frac{s}{2}\int_{Q}\frac{\varphi_{at} }{k}w^2 dxdadt-
 s^2\int_{Q}\varphi_x\varphi_{ax}w^2 dxdadt\\
 &-\frac{s}{2}\int_0^A\int_0^{1} \left[\frac{\varphi_a }{k}w^2\right]_0^T dxda+ s^2 \int_0^T\int_0^A\big[\varphi_x\varphi_aw^2\big]_0^{1}dadt.
\end{aligned}
\end{equation}
Adding \eqref{!1} - \eqref{!2}, \eqref{D&BT} follows immediately.
\end{proof}
The next lemma holds.
\begin{Lemma}\label{BT}Assume Hypothesis $\ref{BAss01}$.
The boundary terms in \eqref{D&BT} become
\begin{equation}\label{BT1}
\begin{aligned}
\{B.T.\}=-se^{R}\int_0^T\int_0^A\Theta(t,a)w_x^2(t, a, 1)dadt.
\end{aligned}
\end{equation}
\end{Lemma}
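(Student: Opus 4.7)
My plan is to inspect each boundary contribution in $\{B.T.\}$ of \eqref{D&BT} one by one and use the homogeneous data satisfied by $w=e^{s\varphi}v$ to discard all but one. Recall that from the definition of $w$ and from \eqref{1'} we have $w\equiv 0$ on each of the six faces $\{t=0\}$, $\{t=T\}$, $\{a=0\}$, $\{a=A\}$, $\{x=0\}$, $\{x=1\}$. Since $w$ is identically zero on the four faces in time and age, its spatial derivative $w_x$ is identically zero there as well. This immediately annihilates every term in $\{B.T.\}$ evaluated between $t=0$ and $t=T$, or between $a=0$ and $a=A$: namely the three integrals $-\frac12\int [w_x^2]_0^T$, $-\frac12\int[w_x^2]_0^A$ and the two $w^2$ contributions carried by those bounds.

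Next I would dispatch the boundary terms at $x=0,1$ that still involve an undifferentiated $w$. Since $w(t,a,0)=w(t,a,1)=0$, the partial derivatives $w_t$ and $w_a$ also vanish on those two faces, so the expression $\int_0^T\!\!\int_0^A[w_x(w_t+w_a)]_0^1\,dadt$ is zero. Every term of $\{B.T.\}$ carrying an undifferentiated factor $w$ (the $-s\int[k\varphi_{xx}ww_x]_0^1$ integral and the $-s\int[(s^2k\varphi_x^3-s\varphi_x\varphi_t-s\varphi_x\varphi_a)w^2]_0^1$ integral) therefore vanishes at $x=0$ and at $x=1$. The only possibly non-zero contribution that remains is
\[
-s\int_0^T\!\!\int_0^A\bigl[k\varphi_x w_x^2\bigr]_{x=0}^{x=1}\,dadt.
\]

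To evaluate this, I would use the explicit form of $\varphi$. Since $\varphi(t,a,x)=\Theta(t,a)\bigl(p(x)-2\|p\|_{L^\infty(0,1)}\bigr)$ with $p(x)=\int_0^x\frac{y}{k(y)}e^{Ry^2}\,dy$, a direct differentiation yields $\varphi_x=\Theta\,\frac{x}{k(x)}e^{Rx^2}$, hence
\[
k(x)\varphi_x(t,a,x)=\Theta(t,a)\,x\,e^{Rx^2}.
\]
This is a continuous function on $[0,1]$, which vanishes at $x=0$ and equals $\Theta(t,a)e^R$ at $x=1$. Plugging these values into the surviving integral gives exactly \eqref{BT1}.

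The delicate point I expect is the justification of the trace $w_x(t,a,1)$ and of the fact that the boundary contribution at $x=0$ really is zero in spite of the degeneracy of $k$. Near $x=1$ the coefficient $k$ is strictly positive, so $v\in \mathcal V_1$ has enough $H^2$-regularity in space for a classical trace; near $x=0$ the vanishing factor $k\varphi_x=\Theta\,x\,e^{Rx^2}$ absorbs any possible blow-up of $w_x^2$, so the boundary term at $x=0$ vanishes regardless. This absorption is the main technical obstacle, and can be made rigorous through a density argument on smooth approximations of $v$, in the same spirit as in \cite{cfr,cfr1}.
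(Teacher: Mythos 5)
Your accounting of the faces and the identification of the surviving term are correct: $k\varphi_x=\Theta\,x\,e^{Rx^2}$, so the only nonzero contribution is $-s\int_0^T\int_0^A\bigl[k\varphi_x w_x^2\bigr]_{x=0}^{x=1}\,da\,dt$, whose $x=1$ part is exactly \eqref{BT1}. The genuine gap is at the degenerate endpoint $x=0$. There, ``the term vanishes because $w(t,a,0)=0$'' is not an argument: the brackets are limits as $x\to0^+$ of products whose cofactors blow up. For instance $s^2\varphi_x\varphi_a w^2=s^2\Theta\Theta_a\tfrac{x}{k}\bigl(p(x)-2\|p\|_{L^\infty(0,1)}\bigr)e^{Rx^2}w^2$ and $s^3k\varphi_x^3w^2=s^3\Theta^3\tfrac{x^3}{k^2}e^{3Rx^2}w^2$, and the factors $\tfrac{x}{k}$, $\tfrac{x}{k}p(x)$, $\tfrac{x^3}{k^2}$ are unbounded as $x\to0^+$ for $k$ allowed by Hypothesis \ref{BAss01} (e.g.\ $k(x)=x^\alpha$, $3/2<\alpha<2$); likewise $w_x$ need not have a finite trace at $x=0$, so the brackets $k\varphi_{xx}ww_x$, $w_x(w_t+w_a)$ and $\Theta x\,w_x^2$ at $x=0$ are all $0\cdot\infty$ forms. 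Disposing of them is precisely the substance of the paper's proof: the H\"older bound $w^2(t,a,x)\le x\int_0^x w_x^2\,dy$, the monotonicity of $x\mapsto x^\gamma/k$ for $\gamma\ge M$ with $M<2$ (Lemma \ref{rem}), a case distinction $M\le1$ versus $M>1$ for the term containing $p(x)$, the estimate $|w_a(t,a,x)|\le\sqrt{x}\bigl(\int_0^x w_{ax}^2\,dy\bigr)^{1/2}$, and \cite[Lemma 3.9]{cfr} for the pieces without $a$-derivatives (in particular $x\,w_x^2\to0$).

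Your closing paragraph acknowledges a delicacy only for the single term $k\varphi_x w_x^2$, and even there ``the vanishing factor absorbs any possible blow-up of $w_x^2$'' is an assertion, not a proof (one needs $xw_x^2(t,a,x)\to0$, which uses the $H^2_{\frac1k}$ regularity and $M<2$). The terms with coefficients $\varphi_x\varphi_a$, $\varphi_x\varphi_t$, $k\varphi_x^3$, $k\varphi_{xx}$ at $x=0$ — the hardest part of the lemma — are dismissed by the boundary value of $w$ alone, so as written the proposal does not establish \eqref{BT1}. A minor additional remark: on the faces $t=0,T$ and $a=0$ the brackets carrying $s^2\varphi_x^2-\,s\frac{\varphi_t+\varphi_a}{k}$ also involve weights blowing up like powers of $\Theta$; they vanish because $w^2=e^{2s\varphi}v^2$ decays exponentially in $\Theta$, not merely because ``$w=0$ there'', so this too deserves a sentence.
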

The proof is based on the next result:
\begin{Lemma} \label{rem}
For all $\gamma \ge M$ the map
$ x \mapsto \dfrac{x^\gamma}{k}$ is nondecreasing in
$(0,1]$
 so that $\lim_{x\to 0}\dfrac{x^\gamma}{k}=0 $
for all $\gamma>M$.
\end{Lemma}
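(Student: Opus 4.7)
The plan is to compute the derivative of the map $x\mapsto x^\gamma/k(x)$ directly and read off its sign from the structural assumption on $xk_x/k$. For $x\in(0,1]$ set $f(x):=x^\gamma/k(x)$, which is well defined and $C^1$ on $(0,1]$ since $k\in C^2(0,1]$ with $k>0$ there. A straightforward differentiation gives
\[
f'(x)=\frac{\gamma x^{\gamma-1}k(x)-x^\gamma k_x(x)}{k(x)^2}=\frac{x^{\gamma-1}}{k(x)^2}\bigl(\gamma\,k(x)-x\,k_x(x)\bigr).
\]
By Hypothesis \ref{BAss01} we have $xk_x(x)/k(x)\le M\le \gamma$, hence $\gamma k(x)-xk_x(x)\ge 0$. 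Since $x^{\gamma-1}\ge 0$ and $k(x)^2>0$, this yields $f'(x)\ge 0$, so $f$ is nondecreasing on $(0,1]$, proving the first assertion.

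For the limit claim, I would exploit the monotonicity just obtained. Being nondecreasing and nonnegative, $x^\gamma/k(x)$ has a nonnegative limit as $x\to 0^+$ (its infimum). To show this limit is zero when $\gamma>M$, I would factor
\[
\frac{x^\gamma}{k(x)}=x^{\gamma-M}\cdot\frac{x^M}{k(x)}.
\]
Applying the first part of the lemma with the exponent $M$ (which is allowed since $M\ge M$), the map $x\mapsto x^M/k(x)$ is nondecreasing on $(0,1]$, hence bounded above by its value at $x=1$, namely $1/k(1)$. Since $\gamma-M>0$, $x^{\gamma-M}\to 0$ as $x\to 0^+$, and therefore $x^\gamma/k(x)\to 0$.

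The argument is essentially one line of calculus plus a factorization trick; I do not foresee a real obstacle. The only point worth being careful about is that the hypothesis $xk_x/k\le M$ is used pointwise (not just in $L^\infty$-norm) on the whole interval where we need monotonicity, and that $k$ is smooth and positive away from $0$ so that $f'$ is well defined on $(0,1]$; both facts are supplied by Hypothesis \ref{BAss01}. Note also that the limit statement for $\gamma=M$ is not claimed, and indeed cannot be expected in general since $x^M/k$ may tend to a strictly positive constant (as happens, e.g., for $k(x)=x^M$).
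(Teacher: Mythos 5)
Your proof is correct; the paper itself states Lemma \ref{rem} without proof (it is the standard monotonicity fact borrowed from \cite{cfr}), and your argument --- writing $f'(x)=\frac{x^{\gamma-1}}{k^2}\bigl(\gamma k - x k_x\bigr)\ge 0$ from $xk_x/k\le M\le\gamma$, then factoring $x^\gamma/k=x^{\gamma-M}\cdot x^M/k$ with $x^M/k$ bounded by monotonicity to get the limit --- is exactly the intended one. Your closing caveats (the pointwise use of $xk_x/k\le M$ on the interval where monotonicity is claimed, and the sharpness of excluding $\gamma=M$ from the limit statement, e.g.\ $k(x)=x^M$) are apt and consistent with how the lemma is used in the proof of Lemma \ref{BT}, which only needs the behaviour near $x=0$.
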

\begin{proof}[Proof of Lemma \ref{BT}] Using the definition of $\varphi$ and \cite[Lemma 3.9]{cfr},
the boundary terms of  \:$<L^+_sw,
L^-_sw>_{L^2_{\frac{1}{k}}(Q)}$ become
\[
\begin{aligned}
 \big\{B.T.\big\}\;
&=-se^{R}\int_0^T\int_0^A\Theta(t)w_x^2(t,a,1)dadt
+\int_0^T\int_0^A\Big[w_x w_a\Big]_0^{1}dadt
\\[3pt]&
-\frac{s}{2}\int_0^A \int_{0}^{1}\!\!\Big[ \frac{\varphi_a }{k}w^2\Big]_0^Tdxda
+ s^2 \int_0^T\int_0^A\Big[  \varphi_x\varphi_aw^2\Big]_0^{1}dadt\\
&
 -\frac{1}{2}\int_0^T\int_0^{1} \big[w_x^2\big]_0^Adxdt
 -\frac{s}{2}\int_0^T\int_0^{1}\big[ \frac{\varphi_a}{k}w^2\big]_0^Adxdt\\
  &=\; -se^{R}\int_0^T\int_0^A\Theta(t)w_x^2(t,a,1)dadt
 -\frac{1}{2}\int_0^T\int_0^{1} \big[w_x^2\big]_0^Adxdt
+\int_0^T\int_0^A\Big[w_xw_a\Big]_0^{1}dadt
\\[3pt]
&
- \frac{s}{2}\int_0^A \int_{0}^{1}\!\!\Big[\Theta_a \frac{p(x)-2\|p~\|_{L^{\infty}(0,1)}}{k}w^2\Big]_0^Tdxda- \frac{s}{2}\int_0^T \int_{0}^{1}\!\!\Big[\Theta_a \frac{p(x)-2\|p~\|_{L^{\infty}(0,1)}}{k}w^2\Big]_0^Adxdt\\
&+ s^2  \int_0^T\int_0^A\Big[ \Theta \Theta_a \frac{x}{k}(p(x)-2\|p~\|_{L^{\infty}(0,1)}) e^{Rx^2}w^2\Big]_0^{1}dadt.
\end{aligned}
\]
Since $w \in \mathcal{V}_1$, $w(0, a, x)$, $w(T,a, x)$, $w_x(0,a, x)$,
$w_x(T,a, x)$, $w(t,0, x)$, $w(t, A, x)$ and $\int_0^T\int_0^{1} \big[w_x^2\big]_0^Adxdt$ are well defined; thus, using the boundary conditions and the definition of $w$ itself, we
get
\[
\begin{aligned}
\int_0^T\int_0^{1} \big[w_x^2\big]_0^Adxdt&=\int_0^A \int_{0}^{1}\!\!\Big[\Theta_a \frac{p(x)-2\|p~\|_{L^{\infty}(0,1)}}{k}w^2\Big]_0^Tdxda\\&=\int_0^T \int_{0}^{1}\!\!\Big[\Theta_a \frac{p(x)-2\|p~\|_{L^{\infty}(0,1)}}{k}w^2\Big]_0^Adxdt=0.
\end{aligned}
\]
Moreover, since $w \in \mathcal{V}_1$, we have that
 $w_a(t,a,0)$ and $w_a(t,a,1)$ make sense. Moreover, also $w_x(t,a,0)$ and $w_x(t,a, 1)$
are well defined, since  $w(t,a,\cdot)\in H^2_{\frac{1}{k}}(0,1)$.
Thus $ \int_0^T\int_0^A[ w_xw_a]_{x=0}^{x=1}dadt$ is well defined and actually
equals $0$. Indeed, by the boundary conditions, we find
\[
|w_a(t,a,x)|\le \int_0^x |w_{ax}(t,a,y)| dy \le \sqrt{x}\left(\int_0^x
|w_{ax}(t,a,y)|^2dy\right)^{1/2}\to 0
\]
as $x\to 0$, the integral being finite.
Now, we consider the term
\[
 \int_0^T\int_0^A\Big[ \Theta \Theta_a \frac{x}{k}(p(x)-2\|p~\|_{L^{\infty}(0,1)}) e^{Rx^2}w^2\Big]_0^{1}dadt.
 \]
 Since $w(t,a,1)=0$, 
 \[
  \int_0^T\int_0^A\Big[ \Theta \Theta_a \frac{x}{k}(p(x)-2\|p~\|_{L^{\infty}(0,1)}) e^{Rx^2}w^2\Big](t,a,1)dadt=0.
 \]
Moreover, by H\"older inequality, $\ds w^2(t,a,x) \le x \int_0^x w_x^2(t,a,y)dy$; hence, by Lemma \ref{rem}, one has
\[
\left|\Theta(t,a){\Theta_a}(t,a)\frac{~x}{k}~w^2(t,a,x)\right|
\le\:
\Theta(t,a)|{\Theta_a}(t,a)|\frac{x ^2}{k(x)}
\int_0^{\text{\:\small$x$}}w_x^2(t,a,y) dy \rightarrow 0,
\]
as $x \rightarrow 0^+$. Thus
\[
\begin{aligned}
&2s^2\|p~\|_{L^{\infty}(0,1)}\int_0^T\int_0^A\Theta(t,a){\Theta_a}(t,a)\left[e^{Rx^2}\frac{~x}{k}~w^2\right](t,a,0) dadt 
\\[3pt]& =
\lim_{\epsilon\rightarrow 0}
2s^2\|p~\|_{L^{\infty}(0,1)}\int_0^T\int_0^A\Theta(t,a){\Theta_a}(t, a)\left[e^{Rx^2}\frac{~x}{k}~w^2\right](t,a,\epsilon) dadt=0.
\end{aligned}
\]
Finally, using the fact that the function $\ds x \mapsto \frac{x^{M}}{k}$ is nondecreasing, one has that
\[
\int_0^T\int_0^A\Theta(t,a){\Theta_a}(t,a)\left[e^{Rx^2}\frac{~x}{k}p(x)w^2\right](t,a,0) dadt =0.
\]
Indeed, if $M\le 1$,
\[
\left|\Theta(t,a){\Theta_a}(t,a)e^{Rx^2}\frac{~x}{k}p(x)w^2(t,a,x)\right|\le  \Theta(t,a)|{\Theta_a}(t,a)|e^R\left(\frac{x^2}{k(x)}\right)^2\int_0^x w_x^2(t,a,y)dy \rightarrow 0, \]
as $x \rightarrow 0$.
If $M >1$,
\[
\begin{aligned}
\left|\Theta(t,a){\Theta_a}(t,a)e^{Rx^2}\frac{~x}{k}p(x)w^2(t,a,x)\right|&\le   \Theta(t,a)|{\Theta_a}(t,a)|e^R\frac{x^{M+1}}{k^2(x)}w^2(t,a,x)\int_0^x \frac{1}{y^{M-1}} dy \\
&=\Theta(t,a)|{\Theta_a}(t,a)|e^R\frac{x^3}{k^2(x)}w^2(t,a,x).
\\
&\le \Theta(t,a)|{\Theta_a}(t,a)|e^R\frac{x^4}{k^2(x)}\int_0^x w_x^2(t,a,y)dy\rightarrow 0, 
\end{aligned}
\]
as $x \rightarrow 0$.
Hence the thesis.
\end{proof}

The crucial step is to prove
now the following estimate.

\begin{Lemma}\label{lemma2}Assume Hypothesis $\ref{BAss01}$. There exist two  strictly positive constants $C$ and $s_0$ such
that, for all $s\ge s_0$,  all solutions $w$ of \eqref{1'}
satisfy the following estimate
\[
sC\int_{Q}\Theta w_x^2 dxdadt
+s^3C\int_{Q}\Theta^3 \Big(\frac{x}{k}\Big)^2w^2 dxdadt \le \big\{D.T.\big\} .
\]
\end{Lemma}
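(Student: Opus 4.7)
The plan is to substitute the explicit form of $\varphi=\Theta(p(x)-2\|p\|_{L^\infty(0,1)})$, with $p'(x)=\frac{x}{k(x)}e^{Rx^2}$, into the distributed terms $\{D.T.\}$ of Lemma \ref{lemma1} and extract a positive lower bound from the two leading contributions
\[
s\!\int_Q \bigl(k\varphi_{xx}+(k\varphi_x)_x\bigr)w_x^2\,dxdadt\quad\text{and}\quad s^3\!\int_Q \varphi_x^2\bigl(k\varphi_{xx}+(k\varphi_x)_x\bigr)w^2\,dxdadt,
\]
while absorbing every remaining term as lower order for $s$ sufficiently large.

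\textbf{Positivity of the two dominant terms.} A direct calculation gives $\varphi_x=\Theta\frac{x}{k(x)}e^{Rx^2}$ and $k\varphi_x=\Theta xe^{Rx^2}$, so
\[
k\varphi_{xx}+(k\varphi_x)_x=\Theta e^{Rx^2}\!\left(2+4Rx^2-\frac{xk_x}{k(x)}\right).
\]
Hypothesis~\ref{BAss01} gives $xk_x/k\le M<2$ on $(0,\varepsilon)$, so $k\varphi_{xx}+(k\varphi_x)_x\ge (2-M)\,\Theta e^{Rx^2}$ there. On $[\varepsilon,1]$ the function $k$ is $C^2$ and bounded below, so choosing $R$ large enough the term $4Rx^2\ge 4R\varepsilon^2$ dominates the bounded quantity $xk_x/k$ and the same kind of strictly positive lower bound $\ge c_R\Theta$ holds. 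Combined with $\varphi_x^2=\Theta^2(x/k)^2e^{2Rx^2}$, this already produces the required contributions $sC\int_Q\Theta w_x^2\,dxdadt$ and $s^3C\int_Q\Theta^3(x/k)^2w^2\,dxdadt$ inside $\{D.T.\}$.

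\textbf{Control of the remaining terms.} The five other summands in $\{D.T.\}$ carry derivatives of $\Theta$ in the variables $t,a$. Near the singularities of $\Theta$ one has $|\Theta_\tau|\le C\Theta^{5/4}$ for $\tau\in\{t,a\}$ and $|\Theta_{\tau\sigma}|\le C\Theta^{3/2}$, growths strictly slower than $\Theta^3$, so each such term is bounded by $\frac{C}{s}$ times one of the two dominant contributions and is absorbed for $s\ge s_0$ large. For the cross term $s\int_Q(k\varphi_{xx})_x w w_x\,dxdadt$, the regularity $(xk_x/k)_x\in L^\infty(0,\varepsilon)$ from Hypothesis~\ref{BAss01} yields $|(k\varphi_{xx})_x|\le C\Theta e^{Rx^2}$, and Young's inequality gives
\[
s\Bigl|\!\int_Q(k\varphi_{xx})_x w w_x\,dxdadt\Bigr|\le \delta s\!\int_Q\Theta w_x^2\,dxdadt+\frac{C}{\delta}\,s\!\int_Q\Theta w^2\,dxdadt.
\]
The first piece is absorbed into the $w_x^2$-dominant term by choosing $\delta$ small, while the $w^2$-remainder is handled via a Hardy-type inequality (available since $w(t,a,0)=0$ and $M<2$), which converts $\int_0^1\Theta w^2\,dx$ into a multiple of $\int_0^1\Theta w_x^2\,dx$ and is absorbed into the same term.

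\textbf{Main obstacle and conclusion.} Collecting the previous bounds and fixing $s_0$ large enough so that every absorption is strict, one arrives at $sC\int_Q\Theta w_x^2\,dxdadt+s^3C\int_Q\Theta^3(x/k)^2w^2\,dxdadt\le\{D.T.\}$. The most delicate point is the behaviour of the weight $\Theta^3(x/k)^2$ near $x=0$: for weakly degenerate $k$ the quantity $x/k$ can vanish at $0$, so the $s\int\Theta w^2$ residue from Young cannot be absorbed by a pointwise comparison with $s^3\Theta^3(x/k)^2w^2$ and must instead be routed through the Hardy estimate above; this is exactly where the structural bound $xk_x/k\le M<2$ is used in full strength, both to guarantee integrability of $p$ near $0$ and to guarantee the pointwise positivity $2-M>0$ of the leading factor.
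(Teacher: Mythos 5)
Your overall skeleton (substitute $\varphi$, get the factor $2-\tfrac{xk_x}{k}+4Rx^2\ge 2-M>0$ near $0$ and $\ge c_R>0$ away from $0$, keep the two dominant terms, absorb the rest for $s$ large) is the same as the paper's, and your computation of $k\varphi_{xx}+(k\varphi_x)_x$ and your bounds \eqref{magtheta}-type on $\Theta_t,\Theta_{tt},\ldots$ are correct. The gap is in the absorption of the lower-order terms, in two places. First, the three summands of \eqref{D&BT} carrying $\Theta_{tt},\Theta_{aa},\Theta_{ta}$ come with the weight $\tfrac{1}{k}\,(p-2\|p\|_{L^\infty})$, not $(x/k)^2$, and they are \emph{not} "bounded by $\tfrac{C}{s}$ times one of the two dominant contributions": since $M<2$, Lemma \ref{rem} gives $k(x)\ge k(1)x^{M}$, hence $k(x)/x^2\to\infty$ as $x\to 0$, so $\tfrac{\Theta^{3/2}}{k}$ is not pointwise comparable to $\Theta^3\big(\tfrac{x}{k}\big)^2$ near $x=0$ and no pointwise absorption into the $s^3$ term is possible. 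These terms must be treated exactly like the cross term, via the splitting
\begin{equation*}
\int_{Q}\frac{\Theta^{3/2}}{k}\,w^2\,dxdadt\ \le\ \frac{1}{\gamma}\int_{Q}\Theta^2\Big(\frac{x}{k}\Big)^2 w^2\,dxdadt\ +\ \gamma\, C_H\int_{Q}\Theta\, w_x^2\,dxdadt,
\end{equation*}
obtained by Cauchy--Schwarz in the form $\tfrac{1}{k}=\tfrac{x}{k}\cdot\tfrac1x$ followed by Hardy's inequality (this is \eqref{terminenuovo1} in the paper).

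Second, and more seriously, your treatment of the cross term does not close. After Young with parameter $\delta$ you have $\delta\, s\int\Theta w_x^2+\tfrac{C}{\delta}\,s\int\Theta w^2$, and you then convert the second piece back into $\tfrac{C\,C_P}{\delta}\,s\int\Theta w_x^2$ by a Hardy/Poincar\'e inequality and claim it is "absorbed into the same term". But the product of the two Young coefficients is fixed, so after optimizing in $\delta$ you are left with a contribution of order $\sqrt{C\,C_P}\;s\int_Q\Theta w_x^2\,dxdadt$ with a constant that depends on $\|\fh'\|_{L^\infty}$ and the Hardy constant and is in no way guaranteed to be smaller than the coefficient (essentially $2-M$) of the dominant $s\int\Theta w_x^2$ term; there is no small parameter left, so the absorption fails in general. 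The paper's argument avoids this precisely by routing the $w^2$-residue through the displayed splitting above: the Hardy piece carries the \emph{free} factor $\gamma$, which can be chosen small independently of $\ve$ so that it is absorbed by $s\int\Theta w_x^2$ no matter how large $C_H$ is, while the $\tfrac1\gamma$-piece has weight $\Theta^2(x/k)^2$ and only the power $s$, so it is absorbed by the $s^3\int\Theta^3(x/k)^2w^2$ term by taking $s_0$ large. Gaining this extra power of $s$ (rather than returning everything to the $s\int\Theta w_x^2$ term) is the missing idea; with it, your proof coincides with the paper's.
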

\begin{proof}
The distributed terms of
\:$<L^+_sw, L^-_sw>_{L^2_{\frac{1}{k}}(Q)}$, using the definition of $\varphi$, take the form
\begin{equation}\label{02}
\begin{aligned}
\big\{D.T.\big\}\; &=
\quad s \int_{Q}\Theta\Big(2-\frac{xk_x}{k}+ 4Rx^2\Big)e^{Rx^2}w_x^2dxdadt
\\[5pt]
&+s^3\int_{Q}\Theta^3\Big(\frac{x}{k}\Big)^2\Big(2-\frac{xk_x}{k}+ 4Rx^2\Big)e^{3Rx^2}w^2dxdadt
\\[5pt]
&-2 s^2\int_{Q}\Theta{\Theta_t}\Big(\frac{x}{k}\Big)^2e^{2Rx^2}w^2dxdadt
+\frac{s}{2} \int_{Q}\frac{{\Theta_{tt}}}{k}\Big(p- 2\|p\|_{L^{\infty}(0,1)}\Big)w^2dxdadt
\\[5pt]
&+ s \int_Q \Theta\left( e^{Rx^2}\left[1+2Rx- x \frac{k'}{k}\right]\right)_x w w_xdxdadt\\
&+\frac{s}{2} \int_{Q}\frac{{\Theta_{aa}}}{k}\Big(p- 2\|p\|_{L^{\infty}(0,1)}\Big)w^2dxdadt\\
&+s \int_{Q}\frac{{\Theta_{ta}}}{k}\Big(p- 2\|p\|_{L^{\infty}(0,1)}\Big)w^2dxdadt-2s^2 \int_{Q}\Theta \Theta_a \left(\frac{x}{k}\right)^2e^{2Rx^2}w^2dxdadt.
\end{aligned}
\end{equation}
Now, observe that 
 there exists $c>0$ such that 
 \begin{equation}\label{magtheta}
 \begin{aligned}
 &\Theta^{\mu} \le c \Theta ^\nu \mbox{ if } 0<\mu<\nu\\
 &|\Theta  \Theta_t|
\le c\Theta ^3, |\Theta {\Theta_a}|
\le c\Theta ^3, \\
&|{\Theta_{aa}}| \le c\Theta ^{\frac{3}{2}}, |{\Theta_{tt}}| \le c\Theta ^{\frac{3}{2}}
\text{ and  } |\Theta_{ta}|\le c\Theta ^{\frac{3}{2}}.
\end{aligned}
\end{equation}
Hence, proceeding as in the proof of \cite[Lemma 3.8]{cfr} or of \cite[Lemma 4.3]{fm1}, one can deduce
\begin{equation}\label{terminenuovo0}
\begin{aligned}
&s \int_{Q}\Theta\Big(2-\frac{xk_x}{k}+ 4Rx^2\Big)e^{Rx^2}w_x^2dxdadt
\\[5pt]
&+s^3\int_{Q}\Theta^3\Big(\frac{x}{k}\Big)^2\Big(2-\frac{xk_x}{k}+ 4Rx^2\Big)e^{3Rx^2}w^2dxdadt
\\[5pt]
&-2 s^2\int_{Q}\Theta{\Theta_t}\Big(\frac{x}{k}\Big)^2e^{2Rx^2}w^2dxdadt
+\frac{s}{2} \int_{Q}\frac{{\Theta_{tt}}}{k}\Big(p- 2\|p\|_{L^{\infty}(0,1)}\Big)w^2dxdadt
\\[5pt]
&+\frac{s}{2} \int_{Q}\frac{{\Theta_{aa}}}{k}\Big(p- 2\|p\|_{L^{\infty}(0,1)}\Big)w^2dxdadt\\
&+s \int_{Q}\frac{{\Theta_{ta}}}{k}\Big(p- 2\|p\|_{L^{\infty}(0,1)}\Big)w^2dxdadt-2s^2 \int_{Q}\Theta \Theta_a \left(\frac{x}{k}\right)^2e^{2Rx^2}w^2dxdadt\\
&\ge sC\int_{Q}\Theta w_x^2 dxdadt
+s^3C\int_{Q}\Theta^3 \Big(\frac{x}{k}\Big)^2w^2 dxdadt
\\[3pt]
&
-s^2\frac{C}{4}\int_{Q}\Theta^3\left(\frac{x}{k}\right)^2 w^2 dxdadt
- s\frac{C}{4}\ \int_{Q}\frac{~\Theta^{\frac{3}{2}}}{k}w^2  dxdadt,
\end{aligned}
\end{equation}
where $C>0$ denotes some universal  strictly positive constant which may vary
from line to line.

Now, consider the term $\ds \int_Q \Theta\left( e^{Rx^2}\left[1+2Rx- x \frac{k'}{k}\right]\right)_x w w_xdxdadt$.
Setting 
\[\fh:=\ds e^{Rx^2}\left[1+2Rx- x \frac{k'}{k}\right]\]
 and for $\epsilon >0$,  it results
\begin{equation}\label{terminenuovo}
\begin{aligned}
& \left|s \int_Q\Theta \fh' ww_x dxdadt\right| \le \frac{1}{\ve} s \int_Q\Theta |\fh'|^2 w^2dxdadt +\ve s\int_Q\Theta  (w_x)^2 dxdadt\\
& \le \frac{1}{\ve} s c\|\fh'\|^2_{L^\infty(0,1)}\|k\|_{L^\infty(0,1)}\int_Q\Theta^{\frac{3}{2}} \frac{w^2}{k}dxdadt +\ve s \int_Q\Theta (w_x)^2dxdadt.
\end{aligned}
\end{equation}
As in \cite{cfr}, one has, for $\gamma >0$,
\[
\begin{aligned}
 \int_{Q}\frac{~\Theta^{\frac{3}{2}}}{k}w^2 dxdadt
&=
\int_{Q} \left(\frac{1}{\gamma }\Theta^2\left(\frac{x}{k}\right)^2w^2\right)^{\frac{1}{2}}
           \left(\gamma \frac{\Theta}{~x^2}\, w^2\right)^{\frac{1}{2}}dxdadt
\\[3pt]&\le
\frac{1}{\gamma }\int_{Q}\Theta^2\left(\frac{x}{k}\right)^2w^2 dxdadt
+ \gamma \int_{Q}\frac{\Theta}{~x^2}\, w^2 dxdadt.
\end{aligned}\]
By Hardy's inequality one has
\begin{equation}\label{terminenuovo1}
\begin{aligned}
 \int_{Q}\frac{~\Theta^{\frac{3}{2}}}{k}~w^2 dxdadt
& \le
\frac{1}{\gamma }\int_{Q}\Theta^2\left(\frac{x}{k}\right)^2w^2 dxdadt
+\gamma  C\int_{Q}\Theta w_x^2 dxdadt,
\end{aligned}\end{equation}
for a  strictly positive constant $C.$ 

Thus, for $s_0$ large enough and $\gamma $ small enough, by \eqref{terminenuovo0}, \eqref{terminenuovo} and \eqref{terminenuovo1}, the thesis follows.
\end{proof}

As a consequence of Lemmas \ref{BT} and \ref{lemma2}, we have
\begin{Proposition}\label{stima}Assume Hypothesis $\ref{BAss01}$.
There exist two  strictly positive constants $C$ and $s_0$ such
that, for all $s\ge s_0$,  all solutions $w$ of \eqref{1'} in $\mathcal{V}_1$ satisfy
\[
\int_{Q}\!\!\!  s\Theta w_x^2+s^3\Theta^3 \left(\frac{x}{k}\right)^2 w^2 dxdadt
\le
C\left(\int_{Q}f^{2}\text{\small$\frac{~e^{2s\varphi}}{k}$\normalsize}~dxdadt
+ s\int_0^T\int_0^A\Theta(t,a)w_x^2(t,a, 1)dadt\right).
\]
\end{Proposition}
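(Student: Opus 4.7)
The plan is to derive Proposition \ref{stima} as an immediate consequence of the identity \eqref{Dis1} combined with the two preceding lemmas, which have already done the heavy lifting. I will not need any new ingredient; only a careful bookkeeping step.

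First, I start from the fundamental identity \eqref{Dis1}, namely
\[
\|L^+_sw\|^2_{L^2_{1/k}(Q)}+\|L^-_sw\|^2_{L^2_{1/k}(Q)}+2\langle L^+_sw,L^-_sw\rangle_{L^2_{1/k}(Q)}
=\|fe^{s\varphi}\|^2_{L^2_{1/k}(Q)}.
\]
Since the two squared norms on the left are nonnegative, I may drop them to obtain
\[
2\langle L^+_sw,L^-_sw\rangle_{L^2_{1/k}(Q)}\;\le\;\int_Q f^2\frac{e^{2s\varphi}}{k}\,dx\,da\,dt.
\]

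Next, by Lemma \ref{lemma1}, the inner product on the left splits as $\{D.T.\}+\{B.T.\}$. Lemma \ref{lemma2} provides the lower bound
\[
\{D.T.\}\;\ge\;sC\int_Q\Theta w_x^2\,dx\,da\,dt+s^3C\int_Q\Theta^3\Big(\frac{x}{k}\Big)^2 w^2\,dx\,da\,dt,
\]
while Lemma \ref{BT} identifies the boundary terms as
\[
\{B.T.\}=-se^{R}\int_0^T\!\!\int_0^A \Theta(t,a)\,w_x^2(t,a,1)\,da\,dt.
\]
Substituting these two facts into the previous inequality and moving the (negative) boundary contribution to the right-hand side gives
\[
\int_Q\!\left(s\Theta w_x^2+s^3\Theta^3\Big(\frac{x}{k}\Big)^2 w^2\right)\!dx\,da\,dt
\le C\!\left(\int_Q f^2\frac{e^{2s\varphi}}{k}dx\,da\,dt+s\int_0^T\!\!\int_0^A\Theta(t,a)w_x^2(t,a,1)\,da\,dt\right),
\]
up to relabeling the universal constant $C$, which is exactly the claim of the proposition.

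There is no real obstacle here since all the analytical work (the computation of the cross term, the estimate of the distributed terms via Hardy's inequality and the properties of $\Theta$, and the cancellation/vanishing of the boundary traces at $x=0$ via Lemma \ref{rem}) has been carried out in Lemmas \ref{lemma1}, \ref{BT} and \ref{lemma2}. The only point to watch is that the constants $s_0$ in Lemma \ref{lemma2} must be taken large enough to absorb both the terms of order $s^2$ appearing in \eqref{terminenuovo0} into the $s^3$ dominant term and the $\varepsilon$-perturbation coming from \eqref{terminenuovo}; choosing $s\ge s_0$ accordingly yields the final inequality with a single universal constant $C$.
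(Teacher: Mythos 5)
Your argument is correct and is precisely the route the paper takes: it deduces Proposition \ref{stima} from the identity \eqref{Dis1} together with Lemmas \ref{lemma1}, \ref{BT} and \ref{lemma2}, dropping the nonnegative squared norms and moving the boundary term to the right-hand side. The bookkeeping you make explicit (absorbing the lower-order terms for $s\ge s_0$ and relabeling the constant) matches what the paper leaves implicit.
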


 Recalling the definition of $w$, we have $v= e^{-s\varphi}w$
and $v_{x}=  (w_{x}-s\varphi_{x}w)e^{-s\varphi}$. Thus, Theorem
\ref{Cor1} follows immediately by Proposition \ref{stima} when $\mu \equiv 0$.

\vspace{0.5cm}
Now, we assume that $\mu \not \equiv 0$.

To complete the proof of Theorem \ref{nondegenerediv} we consider the function $\overline{f}=f+\mu v$.
 Hence,  there are  two  strictly positive constants $C$ and $s_0$ such that, for
all $s\geq s_0$, the following inequality holds
\begin{equation} \label{fati1?}
\begin{aligned}
\int_{Q}\left(s \Theta v_x^2
                + s^3\Theta^3\text{\small$\displaystyle\Big(\frac{x}{k}\Big)^2$\normalsize}
                  v^2\right)e^{2s\varphi}dxdadt
&\le
C\int_{Q}\bar f^{2}\text{\small$\frac{~e^{2s\varphi}}{k}$\normalsize}~dxdadt
\\&+ s\,C\int_0^T \int_0^A\Theta(t,a)\Big[ x v_x^2 e^{2s\varphi}\Big](t, a, 1)dadt.
\end{aligned}
\end{equation}
On the other hand, we have
\begin{equation} \label{4'?}
\begin{aligned}
\int_{Q}\mid\overline{f} \mid^{2}\frac{~e^{2s\varphi}}{k} \,dxdadt
\leq 2\Big(\int_{Q}|f|^{2}\frac{~e^{2s\varphi}}{k}\,dxdadt
+\|\mu\|_{L^\infty(Q)}^{2}\int_{Q}|v|^{2}\frac{~e^{2s\varphi}}{k}\,dxdadt\Big).
\end{aligned}
\end{equation}
Now, applying Hardy-Poincar\'{e} inequality to the function $\nu:=e^{s\varphi}v$,
we obtain
 \[
      \begin{aligned}
& \int_{Q}|v|^{2}\frac{~e^{2s\varphi}}{k}\,dxdadt=   \int_{Q}
                    \frac{\nu^2}{k}dxdadt
   = \int_{Q}
                    \frac{x^2}{k} \frac{\nu^2}{x^2}dxdadt\le C\int_{Q}
                    \frac{\nu^2}{x^2}dxdadt \\
                    &
     \le
    C\int_{Q}(e^{s\varphi}v)^2_xdxdadt
   \le
    C\int_{Q} e^{2s\varphi}v_x^2dxdadt
   + Cs^2\int_{Q}\Theta^2 e^{2s\varphi}\left(\frac{x}{k}\right)^2 v^2dxdadt.
  \end{aligned}
  \]
   Using this last inequality in (\ref{4'?}), it follows
      \begin{equation}\label{fati2?}
      \begin{aligned}
      \int_{Q} |\bar{f}|^{2}\text{\small$\frac{~e^{2s\varphi}}{k}$\normalsize}~dxdadt
      &\le
      2\int_{Q} |f|^{2}\text{\small$\frac{~e^{2s\varphi}}{k}$\normalsize}~dxdadt
      +C\int_{Q}e^{2s\varphi} v_x^2 dxdadt
      \\&+ Cs^2\int_{Q} \Theta^2 e^{2s\varphi}\left(\frac{x}{k}\right)^2 v^2dxdadt.
     \end{aligned} \end{equation}
   Substituting in
   (\ref{fati1?}), one can conclude

      \[
      \begin{aligned}
    &  \int_{Q}\left(s \Theta v_x^2 +s^3\Theta^3\Big(\frac{x}{k}\Big)^2 v^2\right)e^{2s\varphi}dxdadt
       \le
       C\Big(\int_{Q} |f|^{2}\text{\small$\frac{~e^{2s\varphi}}{k}$\normalsize}~dxdadt
        \\[3pt]& +\int_{Q}\!\!\!e^{2s\varphi} v_x^2 dxdadt
         + s^2\int_{Q}\!\!\! \Theta^2 e^{2s\varphi}\left(\frac{x}{k}\right)^2 v^2dxdadt
         + s\int_0^T \int_0^A\Theta(t,a)\Big[ x v_x^2 e^{2s\varphi}\Big](t, a, 1)dadt \Big).
        \end{aligned}
      \]
This completes the proof of Theorem \ref{Cor1}.
\paragraph{Carleman inequalities when the degeneracy is in the interior}\label{sec-3-3}

Now, we prove  Carleman inequalities for \eqref{adjoint} when $k$ has an interior degeneracy point.
In particular, on $k$ we assume
\begin{Assumptions}\label{Ass021}
The function $k$ satisfies Hypothesis $\ref{Ass0}$ or Hypothesis
$\ref{Ass01}$. Moreover,
\[
\frac{(x-x_0)k'(x)}{k(x)} \in
W^{1,\infty}(0,1),
\]
and, if $M \ge 1$, there exists a constant $\vartheta \in
(0, M]$ such that the function 
\begin{equation}\label{dainfinito}
\begin{array}{ll}
x \mapsto \dfrac{k(x)}{|x-x_0|^{\vartheta}} &
\begin{cases}
& \mbox{ is nonincreasing on the left of $x=x_0$,}\\
& \mbox{ is nondecreasing on the right of $x=x_0$}.
\end{cases}
\end{array}
\end{equation}
\end{Assumptions}
\vspace{0.5cm}

As before, we introduce the function $\Gamma(t,a,x): =\Theta(t,a)\gamma(x)$,
where $\Theta$ is defined as in \eqref{571} and
\begin{equation}\label{c_11}
\gamma(x) := d_1\left(\int_{x_0}^x \frac{y-x_0}{k(y)}e^{R(y-x_0)^2}dy-
d_2\right),
\end{equation}
with $R>0$, $d_2> \displaystyle
\max\left\{\frac{(1-x_0)^2e^{R(1-x_0)^2}}{(2-K)k(1)},
\frac{x_0^2e^{Rx_0^2}}{(2-K)k(0)}\right\}$ and $d_1>0$. Also in this case we have
\[
-d_1d_2\le\gamma(x)<0 \quad \mbox{ for every }x\in[0,1].
\]
\begin{Theorem}\label{Cor11}
Assume Hypothesis  $\ref{Ass021}$. Then,
there exist two  strictly positive constants $C$ and $s_0$ such that every
solution $v$ of \eqref{adjoint} in
\begin{equation}\label{v1}
\mathcal{V}_2:=L^2\big(Q_{T,A}; H^2_{{\frac{1}{k}, x_0}}(0,1)\big) \cap H^1\big(0, T; H^1(0,A;H^1_{{\frac{1}{k}}}(0,1))\big)
\end{equation}
satisfies
\begin{equation}\label{car}
\begin{aligned}
&\int_{Q} \left(s\Theta (v_x)^2 + s^3 \Theta^3
\left(\frac{x-x_0}{k} \right)^2v^2\right)e^{2s\Gamma}dxdadt\\
&\le C\left(\int_{Q} f^{2}\frac{e^{2s\Gamma}}{k}dxdadt +
sd_1\int_0^T\int_0^A\left[\Theta e^{2s \Gamma}(x-x_0)(v_x)^2
dadt\right]_{x=0}^{x=1}dadt\right)
\end{aligned}
\end{equation}
for all $s \ge s_0$, where $d_{1}$ is the constant of
\eqref{c_11}.
\end{Theorem}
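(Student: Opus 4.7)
}
As in the proof of Theorem \ref{Cor1}, I would first reduce to the case $\mu\equiv 0$, postponing the reinsertion of the zero-order term $\mu v$ to the end via a Hardy--Poincar\'e absorption argument identical in spirit to \eqref{fati1?}--\eqref{fati2?} (the weighted Hardy--Poincar\'e inequality with weight $(x-x_0)^{2}/k$ holds under Hypothesis \ref{Ass021} and in particular under \eqref{dainfinito}, which is exactly why the extra condition on $\vartheta$ is required here). Once $\mu\equiv 0$, define
\[
w(t,a,x):=e^{s\Gamma(t,a,x)}v(t,a,x),
\]
so that $w\in\mathcal V_2$ vanishes at $t=0,T$, at $a=0,A$ and at $x=0,1$, and solves an equation of the form $L_sw=e^{s\Gamma}f$ with $Lw:=w_t+w_a+kw_{xx}$. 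Write $L_s w=L_s^+w+L_s^-w$ with
\[
\begin{cases}
L_s^+w:=kw_{xx}-s(\Gamma_t+\Gamma_a)w+s^2k\Gamma_x^2 w,\\[3pt]
L_s^-w:=w_t+w_a-2sk\Gamma_x w_x-sk\Gamma_{xx}w,
\end{cases}
\]
and expand $\|L_s^+w\|^2_{L^2_{1/k}(Q)}+\|L_s^-w\|^2_{L^2_{1/k}(Q)}+2\langle L_s^+w,L_s^-w\rangle_{L^2_{1/k}(Q)}=\|e^{s\Gamma}f\|^2_{L^2_{1/k}(Q)}$.

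The core computation is the inner product $\langle L_s^+w,L_s^-w\rangle_{L^2_{1/k}(Q)}$, which I would perform by a sequence of integrations by parts in $x$, $t$, $a$ following verbatim the scheme of Lemma \ref{lemma1}, splitting every $x$-integral into $\int_0^{x_0}+\int_{x_0}^1$ to accommodate the interior singularity of $1/k$. The outcome is a distributed part $\{D.T.\}$ plus a boundary part $\{B.T.\}$; because $\gamma'(x_0)=0$ by \eqref{c_11} and because $w$ is regular enough across $x_0$ thanks to membership in $\mathcal V_2$ (so that $w_x\in H^1(0,1)$, in particular continuous at $x_0$), all the boundary contributions at $x=x_0$ arising from the splitting cancel; this is exactly the role played by the additional monotonicity condition \eqref{dainfinito} together with the analogue of Lemma \ref{rem} for $|x-x_0|^\gamma/k$. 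The surviving boundary terms reduce to the endpoint contributions at $x=0$ and $x=1$, giving a bracket $sd_1\int_0^T\int_0^A[\Theta e^{2s\Gamma}(x-x_0)w_x^2]_{x=0}^{x=1}dadt$, matching the right-hand side of \eqref{car} after returning to $v$.

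For the distributed terms I would then use the explicit form
\[
\gamma'(x)=d_1\,\frac{x-x_0}{k(x)}e^{R(x-x_0)^2},\qquad k\gamma'' = d_1 e^{R(x-x_0)^2}\Bigl(1+2R(x-x_0)^2-\frac{(x-x_0)k'}{k}\Bigr),
\]
to rewrite $\{D.T.\}$ as in \eqref{02}, with $x$ replaced by $x-x_0$. The principal positive terms are
\[
sd_1\!\int_Q \Theta\Bigl(2-\tfrac{(x-x_0)k'}{k}+4R(x-x_0)^2\Bigr)e^{R(x-x_0)^2}w_x^2\,dxdadt
\]
and its cubic counterpart with $((x-x_0)/k)^2 w^2$; under Hypotheses \ref{Ass0}/\ref{Ass01} the factor $2-(x-x_0)k'/k\ge 2-M>0$, which gives the needed coercivity. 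The remaining lower-order contributions (those carrying $\Theta_t\Theta$, $\Theta_a\Theta$, $\Theta_{tt}/k$, $\Theta_{aa}/k$, $\Theta_{ta}/k$ and the cross term $s\int_Q\Theta\mathfrak h' ww_x dxdadt$ produced by the $x$-derivative in $k\gamma''$) are absorbed exactly as in \eqref{terminenuovo0}--\eqref{terminenuovo1} by means of the elementary estimates \eqref{magtheta} together with the interior Hardy--Poincar\'e inequality from Hypothesis \ref{Ass021}, at the cost of taking $s\ge s_0$ large. Undoing the substitution $v=e^{-s\Gamma}w$ and using $v_x=(w_x-s\Gamma_x w)e^{-s\Gamma}$ converts the estimate into \eqref{car} in the case $\mu\equiv 0$; the general case is then recovered by the Hardy--Poincar\'e absorption sketched above.

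\textbf{Main obstacle.} The real difficulty is not the computation itself (which parallels the boundary case) but \emph{the cancellation at the interior point $x_0$}: one must show that splitting every integration by parts at $x_0$ produces no residual boundary term, which requires both the careful choice $\gamma'(x_0)=0$ encoded in \eqref{c_11} and the monotonicity condition \eqref{dainfinito} on $k/|x-x_0|^\vartheta$, which is what makes expressions of the form $\Theta\Theta_a (x-x_0)^2 e^{2R(x-x_0)^2}w^2/k$ tend to zero as $x\to x_0^\pm$ via a Hardy-type control of $w$ by $w_x$ across $x_0$.
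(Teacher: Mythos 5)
Your proposal follows essentially the same route as the paper: reduction to $\mu\equiv 0$, the substitution $w=e^{s\Gamma}v$, the same $L_s^{\pm}$ splitting and the identity of Lemma \ref{lemma1}, coercivity of the distributed terms from $2-\frac{(x-x_0)k'}{k}+4R(x-x_0)^2\ge 2-M$ with Hardy--Poincar\'e absorption of the lower-order and cross terms (the paper's Lemma \ref{lemma21}), reduction of the boundary terms to the endpoint bracket (Lemma \ref{lemma41}), and the final reinsertion of $\mu$ exactly as in Theorem \ref{Cor1}. The only inaccurate point is your justification of the behaviour at $x_0$: it is not true in general that $\gamma'(x_0)=0$, since $\gamma'(x)=d_1\frac{x-x_0}{k(x)}e^{R(x-x_0)^2}$ and $\frac{|x-x_0|}{k}\sim|x-x_0|^{1-M}$ need not vanish (and may blow up) when $M\ge 1$; in the paper no splitting at $x_0$ is performed at all, the integrations by parts over $(0,1)$ being legitimate because $w\in\mathcal V_2$ gives $w_x\in H^1(0,1)$, while the vanishing of the weighted traces at $x_0$ is obtained from the monotonicity condition \eqref{dainfinito} and Hardy-type estimates (the analogue of Lemma \ref{BT}, i.e. Lemma \ref{lemma41}), which you do invoke and which is what actually carries the argument.
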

\begin{Remark}
Observe that Theorem \ref{Cor11} is  the same as \cite[Theorem 4.2]{fm1}. However, here we assume that $k$ satisfies \eqref{dainfinito} only if $M \ge 1$, while in \cite{fm1} condition \eqref{dainfinito} is required if $M \ge \ds\frac{1}{2}$. Thus, also in this situation, we improve \cite[Theorem 4.2]{fm1} when $y$ is independent of $a$.
\end{Remark}

\paragraph{Proof of Theorem $\ref{Cor11}$}

The proof of Theorem \ref{Cor11} follows the ideas of the one of
\cite[Theorem 4.2]{fm1} or Theorem \ref{Cor1}. As before, we consider, first of all, the case when $\mu\equiv 0$: for
every $s> 0$ consider the function
\[
w(t,a, x) := e^{s \Gamma (t,a, x)}v(t,a, x),
\]
where $v$ is any solution of \eqref{adjoint} in $\mathcal{V}_2$, so that
also $w\in\mathcal{V}_2$, since $\Gamma<0$.
Moreover, $w$ satisfies \eqref{1'} and Lemma \ref{lemma1} still holds. We underline the fact that 
all integrals and integrations by parts are justified by the
definition of $D(\mathcal A)$ and the choice of $\Gamma$, while
before they were guaranteed by the choice of Dirichlet
conditions at $x=0$ or $x=1$, i.e. where the operator degenerates.
Thus we start with the analogue of Lemma \ref{lemma2} in the
weakly and in the strongly degenerate cases, which now gives the
following estimate:

\begin{Lemma}\label{lemma21}
Assume Hypothesis $\ref{Ass021}$. Then there exists a  strictly positive
constant $s_0$ such that for all $s \ge s_{0}$ the distributed terms
of \eqref{D&BT} satisfy the estimate
\[
\begin{aligned}
&s\int_{Q}(k\Gamma_{xx}+(k\Gamma_x)_x)(w_x)^2 dxdadt + s^3
\int_{Q}(\Gamma_x)^2(k\Gamma_{xx}+(k\Gamma_x)_x)w^2 dxdadt
\\
& -2 s^2\int_{Q} \Gamma_x\Gamma_{xt}w^2 dxdadt
 +\frac{s}{2}\int_{Q} \frac{\Gamma_{tt}}{k}w^2 dxdadt
+s\int_{Q}(k\Gamma_{xx})_{x}ww_x  dxdadt\\
&+\frac{s}{2}\int_{Q} \frac{\Gamma_{aa}}{k}w^2 dxdadt+s\int_{Q} \frac{\Gamma_{ta}}{k}w^2 dxdadt-2s^2 \int_{Q} \Gamma_x\Gamma_{xa}w^2 dxdadt
\\&\ge
Cs\int_{Q} \Theta (w_x)^2 dxdadt +Cs^3
\int_{Q}\Theta^3 \left(\frac{x-x_0}{k} \right)^2 w^2
 dxdadt,\end{aligned}
\]
for a universal  strictly positive constant $C$.
\end{Lemma}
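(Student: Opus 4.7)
The plan is to follow the same scheme as in the boundary case (Lemma \ref{lemma2}), replacing $x$ by $x-x_0$, $p$ by $\gamma$ and $\varphi$ by $\Gamma$, and then to check that condition \eqref{dainfinito} (when $M\ge 1$) is exactly what is needed to absorb the critical term coming from $(k\Gamma_{xx})_x ww_x$ via a Hardy--Poincar\'e inequality centered at the interior degeneracy point $x_0$.

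First I would substitute $\Gamma(t,a,x)=\Theta(t,a)\gamma(x)$, with $\gamma'(x)=d_1\frac{x-x_0}{k(x)}e^{R(x-x_0)^2}$, into the distributed terms of \eqref{D&BT}. A direct computation gives
\[
k\Gamma_{xx}+(k\Gamma_x)_x=\Theta\, d_1 e^{R(x-x_0)^2}\Big(2-\frac{(x-x_0)k'}{k}+4R(x-x_0)^2\Big),
\]
so that, thanks to Hypothesis \ref{Ass021} ($(x-x_0)k'/k\le M<2$), the first two lines of the distributed terms yield the principal positive contributions
\[
c\,s\int_Q \Theta (w_x)^2 dxdadt + c\,s^3\int_Q \Theta^3\Big(\frac{x-x_0}{k}\Big)^2 w^2 dxdadt
\]
for some universal $c>0$.

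Next I would show that all the remaining distributed terms are lower-order. Using $|\Theta\Theta_t|,|\Theta\Theta_a|\le c\Theta^3$ and $|\Theta_{tt}|,|\Theta_{aa}|,|\Theta_{ta}|\le c\Theta^{3/2}$ (the analogues of \eqref{magtheta}, available since $\Theta$ is built in the same way as in Section \ref{sec-3-1}), the terms
\[
-2s^2\!\!\int_Q\!\Gamma_x\Gamma_{xt}w^2, \quad -2s^2\!\!\int_Q\!\Gamma_x\Gamma_{xa}w^2, \quad \frac{s}{2}\!\!\int_Q\!\frac{\Gamma_{tt}+\Gamma_{aa}+2\Gamma_{ta}}{k}w^2
\]
are controlled by $s^2\int_Q\Theta^3\big(\frac{x-x_0}{k}\big)^2 w^2+s\int_Q\frac{\Theta^{3/2}}{k}w^2$, which are absorbed by the principal term after taking $s_0$ sufficiently large and applying the interior Hardy--Poincar\'e inequality
\[
\int_Q\frac{\Theta^{3/2}}{k}w^2 dxdadt\le \frac{1}{\gamma}\int_Q\Theta^2\Big(\frac{x-x_0}{k}\Big)^2 w^2 dxdadt+\gamma C\int_Q \Theta (w_x)^2 dxdadt,
\]
proved as in \cite[Lemma 4.3]{fm1}, where the hypothesis \eqref{dainfinito} is used to handle the weight $(x-x_0)^2/k$ near $x_0$.

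The hard part, exactly as in Lemma \ref{lemma2}, is the term
\[
s\int_Q (k\Gamma_{xx})_x\, w\, w_x\, dxdadt.
\]
Here I would write $k\Gamma_{xx}=\Theta\, \mathfrak h$ with
\[
\mathfrak h(x):=k(x)\gamma''(x)=d_1 e^{R(x-x_0)^2}\Big(1+2R(x-x_0)^2-\frac{(x-x_0)k'(x)}{k(x)}\Big),
\]
and observe that, under Hypothesis \ref{Ass021}, $\mathfrak h\in W^{1,\infty}(0,1)$; hence $\mathfrak h'\in L^\infty(0,1)$. Young's inequality gives, for any $\varepsilon>0$,
\[
\Big|s\!\int_Q\!\Theta\,\mathfrak h'\, w w_x\, dxdadt\Big|\le \frac{1}{\varepsilon}s\,c\|\mathfrak h'\|_{L^\infty}^2\|k\|_{L^\infty}\!\int_Q\!\Theta^{3/2}\frac{w^2}{k}dxdadt+\varepsilon s\!\int_Q\!\Theta(w_x)^2 dxdadt,
\]
and the first integral is again estimated by the Hardy--Poincar\'e inequality above. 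Choosing $\varepsilon$ and $\gamma$ small, and then $s_0$ large, the lemma follows. The main obstacle is justifying that $\mathfrak h'\in L^\infty$ and that the Hardy--Poincar\'e inequality is available on both sides of the interior singular point; the first is guaranteed by the assumption $(x-x_0)k'/k\in W^{1,\infty}(0,1)$ in Hypothesis \ref{Ass021}, and the second by \eqref{dainfinito} when $M\ge 1$, exactly as in \cite{fm1}.
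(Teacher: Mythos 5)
Your proposal is correct and follows essentially the same route as the paper's proof: the principal terms come from $2-\frac{(x-x_0)k'}{k}+4R(x-x_0)^2\ge 2-M>0$, the time/age-derivative terms are absorbed via the bounds \eqref{magtheta} together with the weighted Hardy--Poincar\'e estimate for $\int_Q\Theta^{3/2}\frac{w^2}{k}$, and the term $s\int_Q(k\Gamma_{xx})_xww_x$ is handled by Young's inequality using that $\frac{(x-x_0)k'}{k}\in W^{1,\infty}(0,1)$, exactly as the paper does (there by referring to \cite{fm1}). Your only implicit step, the interior Hardy--Poincar\'e inequality, is indeed the point where \eqref{dainfinito} (for $M\ge1$) and the weighted inequalities of \cite{fm}, \cite{fggr} are needed, and you attribute this correctly.
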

\begin{proof}
Using the definition of $\Gamma$, the distributed terms of
$\displaystyle\int_{Q} \frac{1}{k}L^+_s w L^-_s w dxdadt$
take the form
\[
\text{\{D.T.\}}_1\;\left\{\begin{aligned}
&\frac{s}{2}\int_{Q}\frac{\Theta_{tt}+ \Theta_{aa}} {k}\gamma w^2 dxdadt
-2s^2\int_{Q}\Theta{\Theta_t}(\gamma')^2w^2 dxdadt-2s^2\int_{Q}\Theta{\Theta_a}(\gamma')^2w^2 dxdadt\\
&+ s \int_{Q}\Theta (2k\gamma''+ k'
\gamma')(w_x)^2 dxdadt+ s^3\int_{Q}\Theta^3(2k\gamma''+ k' \gamma')(\gamma')^2w^2
dxdadt\\
&
+s\int_{Q} \Theta(k\gamma'')' ww_x dxdadt.
\end{aligned}\right.
\]
Because of the choice of $\gamma(x)$, one has \[\displaystyle 2k(x)
\gamma''(x) + k'(x) \gamma' (x)= d_1e^{R(x-x_0)^2}\frac{2k(x)-k'(x)
(x-x_0)+4R(x-x_0)^2k(x)}{k(x)}.
\]
As in \cite{fm1}, by Hypothesis \ref{Ass0} or \ref{Ass01}, we immediately find
\[
  2-\frac{(x-x_0)k'}{k}+ 4R(x-x_0)^2\ge 2-M \quad \text{a.e. } \; x\in[0,1],
\]
for every $R>0$.
Thus, using the fact that $e^{R(x-x_0)^2}$ is bounded and bounded away from 0 in
$[0,1]$, the distributed terms satisfy the estimate
\begin{equation}\label{aaaaa}
\begin{aligned}
\{D.T.\}_1 &\ge\frac{s}{2}\int_{Q}\frac{\Theta_{tt}+ \Theta_{aa}} {k}\gamma w^2 dxdadt -s^2C\int_{Q}|\Theta\Theta_t|\left(\frac{x-x_0}{k}\right)^2w^2
dxdadt\\
&-s^2C\int_{Q}|\Theta\Theta_a|\left(\frac{x-x_0}{k}\right)^2w^2
dxdadt\\
&+ s C\int_{Q}\Theta (w_x)^2 dxdadt+ s^3C\int_{Q}\Theta^3\left(\frac{x-x_0}{k} \right)^2w^2
dxdadt\\
&+s\int_{Q} \Theta(k\gamma'')' ww_x dxdadt.
\end{aligned}
\end{equation}
By \eqref{magtheta}, we conclude that, for $s$ large enough,
\[
\begin{aligned}
s^2C\int_{Q}(|\Theta\Theta_t|+|\Theta \Theta_a|)\left(\frac{x-x_0}{k}
\right)^2 w^2 dxdadt&\le cC s^2
\int_{Q}\Theta^3\left(\frac{x-x_0}{k} \right)^2w^2 dxdadt\\
&\le \frac{C^3}{8}s^3\int_{Q}\Theta^3 \left(\frac{x-x_0}{k}
\right)^2w^2 dxdadt.
\end{aligned}
\]
Again as in \cite{fm1}, by \eqref{magtheta} we get
\begin{equation}\label{quasfin}
\begin{aligned}
\left|  \frac{s}{2}\int_{Q} \frac{\Theta_{tt} + \Theta_{aa}}{k}\gamma
w^2dxdadt \right| 
&\leq s\frac{d_1d_2}{2}c\int_Q\Theta ^{3/2} \frac{w^2}{k} dxdadt
\\&
\le \frac{C}{4}s\int_{Q} \Theta  (w_x)^2
dxdadt
\\&+ \frac{C^3}{4}s^3\int_{Q}\Theta^3
\left(\frac{x-x_0}{k}\right)^2w^2 dxdadt.
\end{aligned}
\end{equation}
Now, we consider the last term in \eqref{aaaaa}, i.e. $s\int_{Q} \Theta(k\gamma'')' ww_x dxdadt.$
By Hypothesis
\ref{Ass021} and using the definition of $\gamma$, as in \cite{fm1}, we get
\[
\begin{aligned}
& \left|s\int_{Q}\Theta (k \gamma '')'  ww_x dxdadt\right| \le   \frac{C}{4}s\int_{Q} \Theta  (w_x)^2 dxdadt +   s^3
\frac{C^3}{8} \int_{Q}\Theta^3
\left(\frac{x-x_0}{k}\right)^2w^2 dxdadt.
\end{aligned}
\]
Summing up, we obtain
\[
\begin{aligned}
\{D.T.\}_1&\ge -\frac{C}{4}s\int_{Q} \Theta  (w_x)^2 dxdadt -
\frac{C^3}{4}s^3\int_{Q}\Theta^3
\left(\frac{x-x_0}{k}\right)^2w^2 dxdadt \\
& -\frac{C^3}{8}s^3\int_{Q}\Theta^3 \left(\frac{x-x_0}{k}
\right)^2w^2 dxdadt
\\&
+ s C\int_{Q}\Theta (w_x)^2 dxdadt+ s^3C\int_{Q}\Theta^3\left(\frac{x-x_0}{k} \right)^2w^2 dxdadt\\&
-\frac{C}{4}s\int_{Q} \Theta  (w_x)^2 dxdadt -
\frac{C^3}{8}s^3\int_{Q}\Theta^3(w_x)^2 dxdadt
\\&
= \frac{C}{2}s\int_{Q} \Theta (w_x)^2 dxdadt +\frac{C^3}{2}s^3
\int_{Q}\Theta^3 \left(\frac{x-x_0}{k} \right)^2 w^2 dxdadt.
\end{aligned}
\]
\end{proof}

As for the boundary terms, similarly to Lemma \ref{BT}, we
have the following result, whose proof parallels the one of Lemma
\ref{BT} and is thus omitted (see also \cite[Lemma 4.4]{fm1}).
\begin{Lemma}\label{lemma41}
Assume Hypothesis $\ref{Ass021}$. Then the boundary terms in
\eqref{D&BT} reduce to
\[-sd_1\int_0^T\int_0^A\Theta(t)a\Big[(x-x_0)e^{R(x-x_0)^2}(w_x)^2\Big]_{x=0}^{x=1}dadt.
\]
\end{Lemma}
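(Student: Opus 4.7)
The plan is to mirror the analysis carried out in Lemma \ref{BT}, but exploiting the crucial structural difference that, under Hypothesis \ref{Ass021}, the degeneracy of $k$ lies at the interior point $x_0\in(0,1)$, so that $k$ is bounded away from zero in neighborhoods of both $x=0$ and $x=1$. Consequently, the traces of $w_x$ at $x=0$ and $x=1$ are classical (since $v\in\mathcal{V}_2$ gives $w_x(t,a,\cdot)\in H^1(0,1)$), and the traces at $t=0,T$ and $a=0,A$ are well defined as well, so every bracketed term in \eqref{D&BT} has a pointwise meaning without any limiting argument.

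First I would walk through each of the seven bracketed expressions on the right-hand side of \eqref{D&BT}. From the system \eqref{1'} for $w$ one has $w(0,a,x)=w(T,a,x)=0$ and $w(t,0,x)=w(t,A,x)=0$, which, upon differentiating in $x$, also yield $w_x=0$ at $t=0,T$ and at $a=0,A$. This immediately kills the terms $-\frac{1}{2}\int[w_x^2]_0^T$ and $-\frac{1}{2}\int[w_x^2]_0^A$, as well as the two terms $\frac{1}{2}\int[(s^2\varphi_x^2-s(\varphi_t+\varphi_a)/k)w^2]_0^T$ and its $[\cdot]_0^A$ counterpart. Next, the Dirichlet condition $w(t,a,0)=w(t,a,1)=0$ (valid because $v\in\mathcal{V}_2$ and $\Gamma<0$) implies by differentiation in $t$ and $a$ that $w_t=w_a=0$ at $x=0,1$, so $\int_0^T\int_0^A[w_x(w_t+w_a)]_0^1\,dadt=0$. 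The same Dirichlet condition also annihilates $[k\varphi_{xx}ww_x]_0^1$ and $[(s^2k\varphi_x^3-s\varphi_x\varphi_t-s\varphi_x\varphi_a)w^2]_0^1$.

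The only surviving boundary contribution is therefore $-s\int_0^T\int_0^A[k\Gamma_x\,w_x^2]_0^1\,dadt$. To obtain the stated form I would simply substitute the definition $\Gamma(t,a,x)=\Theta(t,a)\gamma(x)$, together with \eqref{c_11}, which gives $\gamma'(x)=d_1\,\dfrac{x-x_0}{k(x)}\,e^{R(x-x_0)^2}$, so that
\[
k(x)\,\Gamma_x(t,a,x)=k(x)\,\Theta(t,a)\gamma'(x)=\Theta(t,a)\,d_1\,(x-x_0)\,e^{R(x-x_0)^2}.
\]
Plugging this into the surviving term produces exactly the expression claimed in the lemma.

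I do not expect any genuine difficulty. The subtlety that made Lemma \ref{BT} delicate — namely, showing via Hardy's inequality and the monotonicity of $x^\gamma/k$ that the boundary contribution at the degenerate endpoint vanishes — is simply absent here, since neither $x=0$ nor $x=1$ is a degenerate point. The only mild care is in invoking $v\in\mathcal{V}_2$ to justify the pointwise meaning of each bracketed trace, which is immediate from the inclusion $w_x(t,a,\cdot)\in H^1(0,1)$ and the Dirichlet boundary conditions on $w$.
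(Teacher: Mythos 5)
Your proposal is correct and is essentially the proof the paper intends (the paper omits it, stating that it parallels Lemma \ref{BT} and citing the analogous result in \cite{fm1}): every bracketed term at $t=0,T$, $a=0,A$ and every $x$-bracket carrying a factor $w$, $w_t$ or $w_a$ vanishes by the boundary conditions of \eqref{1'} — with the understanding that at $t=0,T$ and $a=0$ this rests on the exponential decay of $e^{2s\Gamma}$ beating the polynomial blow-up of the coefficients, i.e. on ``the definition of $w$ itself'' exactly as in Lemma \ref{BT} — leaving only $-s\int_0^T\int_0^A\big[k\Gamma_x w_x^2\big]_{x=0}^{x=1}\,da\,dt = -sd_1\int_0^T\int_0^A\Theta\big[(x-x_0)e^{R(x-x_0)^2}w_x^2\big]_{x=0}^{x=1}\,da\,dt$. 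Your remark that the Hardy/monotonicity limiting arguments of Lemma \ref{BT} are not needed here, since $k$ is nondegenerate at $x=0$ and $x=1$ and all traces are classical, is precisely the simplification the interior case affords (and the ``$\Theta(t)a$'' in the statement is a typo for $\Theta(t,a)$, which is what your computation gives and what the paper uses afterwards).
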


By Lemmas \ref{lemma21} and \ref{lemma41}, there exist $C>0$ and
$s_0>0$ such that all solutions $w$ of \eqref{1'} satisfy, for
all $s \ge s_0$,
\begin{equation}\label{D&BT11}
\begin{aligned}
\int_{Q} \frac{1}{k}L^+_s w L^-_s w dxdadt &\ge
Cs\int_{Q} \Theta (w_x)^2 dxdadt\\&+ Cs^3
\int_{Q}\Theta^3 \left(\frac{x-x_0}{k}\right)^2 w^2 dxdadt
\\&-sd_1\int_0^T\int_0^A\Theta(t,a)\Big[(x-x_0)e^{R(x-x_0)^2}(w_x)^2\Big]_{x=0}^{x=1}dadt.
\end{aligned}
\end{equation}
Thus, for all $s \ge s_0$, we obtain the next
Carleman inequality for $w$:
\[
\begin{aligned}
& s\int_{Q} \Theta (w_x)^2dxdadt  + s^3
\int_{Q}\Theta^3 \left(\frac{x-x_0}{k}\right)^2w^2
dxdadt\\
&\le C\left(\int_{Q} f^2 \frac{e^{2s\Gamma}}{k}dxdadt+
sd_1\int_0^{T}\int_0^A \left[\Theta
(x-x_0)e^{R(x-x_0)^2}(w_x)^2\right]_{x=0}^{x=1}dadt \right).
\end{aligned}
\]

Theorem \ref{Cor11} follows recalling the definition of $w$.

\vspace{0.5cm}
If $\mu \not \equiv 0$, we can 
proceed as in the proof of Theorem  \ref{Cor1}, obtaining the thesis.

\section{Observability and controllability  of linear equations}\label{osservabilita}
In this section we will prove, as a
consequence of the Carleman estimates established in Section 3, 
observability inequalities  for the associated  adjoint problem of \eqref{1}. To this aim, we assume that the control set $\omega$ is such that
\begin{equation}\label{omega}
\omega=  (\alpha, \rho)  \subset\subset  (0,1),
\end{equation}
if $k$ degenerates at the boundary of $(0,1)$. When $k$ degenerates at $x_0\in (0,1),$ $\omega$ is such that 
\begin{equation}\label{omega0}
x_0 \in \omega =  (\alpha, \rho)  \subset\subset  (0,1),
\end{equation}
or
 \begin{equation}\label{omega_new}
 \omega = \omega_1 \cup
\omega_2,
\end{equation}
 where 
\begin{equation}\label{omega2}
\omega_i=(\lambda_i,\rho_i) \subset (0,1), \, i=1,2, \mbox{ and
$\rho_1 < x_0< \lambda_2$}.
\end{equation}

\begin{Remark}\label{beta1}
Observe that, if \eqref{omega0} holds, we can find two subintervals
$\omega_1=(\lambda_1,\rho_1)\subset \subset(\alpha, x_0),
\omega_2=(\lambda_2,\rho_2) \subset\subset (x_0,\rho)$. 
\end{Remark}

Moreover, $k$ and $\beta$ satisfy the following assumptions:
\begin{Assumptions}\label{iposezione4}
The function $k$ is s.t. Hypothesis \ref{BAss01}, \ref{BAss02} or \ref{Ass021} is satisfied. Moreover, if Hypothesis \ref{Ass0} holds, 
there exist
two functions $\fg \in L^\infty_{\rm loc}([-\rho_1,1]\setminus \{x_0\})$, $\fh \in W^{1,\infty}_{\rm loc}([-\rho_1,1]\setminus \{x_0\}, L^\infty(0,1))$ and
two strictly positive constants $\fg_0$, $\fh_0$ such that $\fg(x) \ge \fg_0$ 
\begin{equation}\label{aggiuntivastrana}
\frac{\tilde k'(x)}{2\sqrt{\tilde k(x)}}\left(\int_x^B\fg(t) dt + \fh_0 \right)+ \sqrt{\tilde k(x)}\fg(x) =\fh(x,B)\quad \text{for a.e.} \; x \in [-\rho_1,1], B \in [0,1]
\end{equation}
with $x<B<x_0$ or $x_0<x<B$, where
\begin{equation}\label{tildek}
\tilde k(x):= \begin{cases}k(x), & x \in [0,1],\\
k(-x), & x \in [-1,0].
\end{cases}
\end{equation} 
\end{Assumptions}
Observe that \eqref{aggiuntivastrana} implies the fact that
$
\ds \frac{k'}{\sqrt k} \in L^\infty_{\text{loc}} ([0,1] \setminus\{x_0\}).
$
\begin{Assumptions}\label{conditionbeta} Assume $T<A$ and suppose that there exists  
$\bar a \le T$
 such that
\begin{equation}\label{conditionbeta1}
\beta(a, x)=0 \;  \text{for all $(a, x) \in [0, \bar a]\times [0,1]$}.
\end{equation}
\end{Assumptions} 

Observe that Hypothesis \ref{conditionbeta} is the biological meaningful one. Indeed, $\bar a$ is the minimal age in which the female of the population become fertile, thus it is natural that before $\bar a$ there are no newborns. Obviously,  if $T<A$ and $T=\bar a$, then $y(t,0,x)=\int_T^A \beta(a,x) y(t,a,x) da$. In this case, if $(t,a) \in (0,T) \times (0,T)$, only the mortality rate acts on the equation; hence it is natural to expect that the population is $0$ at $T$.  However, we will prove the observability inequalities  also in this case,  since they are independently interesting. Finally, we underline that, since $T$ is strictly less than $A$, we are able to control the population also in small times, thus complementing \cite{aem}.

\vspace{0,4cm}

Under the previous hypotheses, the following observability inequality holds:

\begin{Proposition}
\label{obser.}
Suppose that Hypotheses $\ref{iposezione4}$ and $\ref{conditionbeta}$ hold and assume that $\omega$ satisfies \eqref{omega}, \eqref{omega0} or \eqref{omega_new}. Then,  for every $\delta \in (T,A)$,
there exists a  strictly positive constant $C= C(\delta)$  such that   every
solution $v\in \mathcal U$ of \begin{equation}\label{h=0}
\begin{cases}
\ds \frac{\partial v}{\partial t} + \frac{\partial v}{\partial a}
+k(x)v_{xx}-\mu(t, a, x)v +\beta(a,x)v(t,0,x)=0,& (t,x,a) \in  Q,
\\[5pt]
v(t,a,0)=v(t,a,1) =0, &(t,a) \in Q_{T,A},\\
  v(T,a,x) = v_T(a,x) \in L^2(Q_{A,1}), &(a,x) \in Q_{A,1} \\
  v(t,A,x)=0, & (t,x) \in Q_{T,1},
\end{cases}
\end{equation}
satisfies
\begin{equation}\label{OI}
 \int_0^A\int_0^1  \frac{1}{k}v^2(T-\bar a,a,x) dxda \le 
 C\left( \int_0^\delta \int_0^1 \frac{v_T^2(a,x)}{k}dxda+ \int_0^T \int_0^A\int_ \omega \frac{v^2}{k} dx dadt\right).
\end{equation}
Here $v_T(a,x)$ is such that $v_T(A,x)=0$ in $(0,1)$.
\end{Proposition}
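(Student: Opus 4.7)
The plan is to combine the Carleman estimates of Section \ref{sec-3} via a spatial cut-off to obtain an $\omega$-localized Carleman estimate for the full adjoint problem \eqref{h=0}, then use Hypothesis \ref{conditionbeta} together with a characteristic-based reduction to handle the nonlocal boundary term $\beta(a,x)v(t,0,x)$, and finally pass from the weighted-in-$(t,a,x)$ Carleman bound to the pointwise-in-$t$ bound required in \eqref{OI} by an energy estimate.

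First I would fix auxiliary subintervals $(\alpha',\rho') \subset\subset \omega$ (in the interior-degenerate case $(\alpha',\rho') \subset\subset \omega_1 \cup \omega_2$ in the obvious way) and introduce a smooth cut-off $\xi : [0,1]\to [0,1]$ with $\xi \equiv 1$ on the region containing the degeneracy and $\xi \equiv 0$ on $[\alpha',\rho']$, so that $\mathrm{supp}(\xi') \subset \omega$. Then $w := \xi v$ satisfies the \emph{degenerate} equation of \eqref{adjoint} with right-hand side $\tilde f := -\mu v \xi + \beta v(t,0,\cdot)\xi + k(\xi_{xx}v + 2\xi_x v_x)$, while $z := (1-\xi)v$ satisfies the \emph{non-degenerate} equation of \eqref{adjoint} on $(0,1)$ with an analogous commutator right-hand side supported in $\omega$. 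Applying the appropriate degenerate estimate (Theorem \ref{Cor1}, \ref{Cor1'} or \ref{Cor11}) to $w$, and the non-degenerate estimate (Theorem \ref{nondegenere}) to $z$, adding them, and using a Caccioppoli-type inequality on the commutator terms (which live only on $\mathrm{supp}(\xi')\subset\omega$), one produces a combined estimate of the form
\begin{equation*}
\int_Q s^3\Theta^3 e^{2s\Phi^*}\tfrac{v^2}{k}\,dx\,da\,dt \,\le\, C\!\int_Q e^{2s\Phi^*}\tfrac{\mu^2 v^2 + \beta^2 v^2(t,0,x)}{k}\,dx\,da\,dt + C\!\int_0^T\!\!\int_0^A\!\!\int_\omega \tfrac{v^2}{k}\,dx\,da\,dt,
\end{equation*}
for a common weight $\Phi^*$ (degenerate inside, non-degenerate near $\omega$).

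Second, since $\mu\in L^\infty(Q)$ the $\mu^2 v^2$ term is absorbed into the left-hand side for $s$ sufficiently large. The heart of the proof is the nonlocal term. Here Hypothesis \ref{conditionbeta} enters decisively: because $\beta(a,x)\equiv 0$ for $a\in[0,\bar a]$, the range of $a$ in that integral reduces to $[\bar a, A]$, and the integrand does not depend on $a$, so
\begin{equation*}
\int_Q e^{2s\Phi^*}\tfrac{\beta^2 v^2(t,0,x)}{k}\,dx\,da\,dt \,\le\, C\int_0^T\!\!\int_0^1 e^{2s\Phi^*(t,a,x)}\tfrac{v^2(t,0,x)}{k}\,dx\,dt.
\end{equation*}
To control $v(t,0,x)$, I would run the characteristic method: for fixed $t\in[0,T]$ set $V(s,x):=v(t+s,s,x)$ on $s\in[0,\tau(t)]$ with $\tau(t)=\min\{\bar a,T-t\}$. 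Since $\beta(s,\cdot)=0$ for $s\in[0,\bar a]$, $V$ satisfies the \emph{purely local} equation $V_s + kV_{xx} = \mu V$, which, by the change of variable $\sigma = \bar a - s$, becomes a well-posed forward degenerate heat equation with Dirichlet boundary conditions. A standard energy estimate for $\|V(\sigma,\cdot)\|_{L^2_{1/k}(0,1)}$ then yields
\begin{equation*}
\int_0^1 \tfrac{v^2(t,0,x)}{k}\,dx \le C\int_0^1 \tfrac{v^2(t+\tau(t),\tau(t),x)}{k}\,dx,
\end{equation*}
so that the right-hand side is either already a $v_T$-trace (when $\tau(t)=T-t$, i.e.\ $t\ge T-\bar a$) or a value of $v$ at age $\bar a$ which, by a second characteristic step, is itself controlled from the observation plus $v_T$.

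Third, to go from the weighted Carleman bound to the pointwise-in-$t$ quantity $\int_0^A\!\!\int_0^1 v^2(T-\bar a,a,x)/k\,dx\,da$, I would choose a time interval $[t_1,t_2]\subset (0,T-\bar a)$ on which $\Theta$ is bounded above and below, so that the Carleman inequality yields $\int_{t_1}^{t_2}\!\!\int_0^A\!\!\int_0^1 v^2/k\,dx\,da\,dt \le \text{RHS of \eqref{OI}}$, and then propagate this to the single time $t=T-\bar a$ by the dissipation-type energy estimate obtained on multiplying \eqref{h=0} by $v/k$ and integrating on $[t_0,T-\bar a]\times (0,A)\times (0,1)$ (with $t_0\in[t_1,t_2]$), applying Hardy–Poincar\'e to absorb the $kv_{xx}$ term and Gronwall in $t_0$. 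The hypothesis $\delta>T$ guarantees that the backward characteristics from $\{T\}\times(0,\delta)\times(0,1)$ cover all of $\{T-\bar a\}\times(0,A)\times(0,1)$, so that any residual boundary term at age $a = A-(T-t)$ appearing in the energy estimate is absorbed into $\int_0^\delta\!\!\int_0^1 v_T^2/k\,dx\,da$.

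The main obstacle I expect is the closing of the loop in the second step: the Carleman right-hand side contains $v(t,0,x)$, which is a \emph{trace} and therefore not controlled by the left-hand side integrand. The whole point of Hypothesis \ref{conditionbeta} is to let us reduce this trace, via the no-newborn region $a\in[0,\bar a]$, to a purely local degenerate heat equation along characteristics, so that $v(t,0,x)$ is controllable by $v_T$ and the observation without invoking a fixed-point argument as in \cite{em}. Carefully matching the Carleman weights $\Phi^*$ with the characteristic transport (so that, for each $t$, $\Phi^*(t,a,x)$ is dominated by $\Phi^*(t+s,a+s,x)$ along characteristics) is the technical delicate point.
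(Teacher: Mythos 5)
Your first two steps broadly parallel the paper's route: the $\omega$-local Carleman estimates are obtained exactly as in Theorems \ref{Cor2}--\ref{Cor13} (cut-offs, Caccioppoli inequalities, the non-degenerate estimate, reflection in the interior case), and the nonlocal trace $\beta(a,x)v(t,0,x)$ is handled, as in the paper, through Hypothesis \ref{conditionbeta} and the characteristic representation \eqref{implicitformula}--\eqref{v(0)} rather than a fixed point (the paper also works first with smooth solutions and concludes by density, which you omit, but that is minor). The genuine gap is in your third step. The adjoint equation \eqref{h=0} is \emph{backward} parabolic (the diffusion enters as $+k v_{xx}$ with final datum at $t=T$), so the dissipative direction is decreasing $t$: multiplying by $v/k$ and integrating on $[t_0,T-\bar a]\times(0,A)\times(0,1)$ gives, up to the $\beta$-term, $\int\!\int \frac{1}{k}v^2(T-\bar a)\,dx\,da=\int\!\int \frac{1}{k}v^2(t_0)\,dx\,da+2\int\!\int\!\int v_x^2+(\text{nonnegative terms})$, i.e.\ the dissipation sits on the side of $v^2(T-\bar a)$, and Hardy--Poincar\'e goes the wrong way to absorb it; no Gronwall argument bounds $\|v(T-\bar a)\|_{L^2_{1/k}}$ by $\|v(t_0)\|_{L^2_{1/k}}$ with $t_0<T-\bar a$. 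The paper's energy estimate \eqref{3.56idriss} is precisely the reverse inequality, $\int\!\int\frac1k v^2(\tilde T,a,x)\le C\int\!\int\frac1k v^2(t,a,x)$ for $t\ge\tilde T=T-\bar a$ (after the scaling $w=e^{\varsigma t}v$ to absorb the $\beta$-term), and the Carleman/observation information is then integrated over a time interval lying to the \emph{right} of $T-\bar a$ (e.g.\ $[T-\bar a,\,T-\bar a/4]$), not in $(0,T-\bar a)$ as you propose.

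A second, related gap: you assert that the Carleman inequality yields $\int_{t_1}^{t_2}\!\int_0^A\!\int_0^1 v^2/k\le$ RHS of \eqref{OI} over the whole age range. It does not: since $\Theta\sim a^{-4}$ and the weights are negative, $e^{2s\varphi}$ (or $e^{2s\Gamma}$) vanishes as $a\to0^+$, so the Carleman left-hand side gives no control of $\int v^2/k$ near $a=0$. This is exactly why the paper splits the $a$-integral, uses the Carleman/Hardy--Poincar\'e argument only for the large-$a$ part, and controls the small-$a$ part purely through the representation formulas \eqref{implicitformula}--\eqref{implicitformula1}; it is there that the term $\int_0^\delta\!\int_0^1 v_T^2/k$ and the condition $\delta>T$ genuinely enter, via the case analysis leading to \eqref{t=06}. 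Your covering claim (backward characteristics from $\{T\}\times(0,\delta)\times(0,1)$ reaching all of $\{T-\bar a\}\times(0,A)\times(0,1)$) is false — they only reach ages in $(0,\delta-\bar a)$ — and cannot replace that analysis; likewise your ``second characteristic step'' for $t<T-\bar a$ risks circularity, since beyond age $\bar a$ the nonlocal source is active again, a point the paper settles by inserting \eqref{v(0)} into the Duhamel term and using $\delta>T$ to keep all $v_T$-arguments inside $(0,\delta)$.
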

\begin{Remark}
\begin{enumerate}
\item
If $T= \bar a$, the observability inequality given in the previous proposition is the corresponding of \cite[Proposition 3.1]{aem}, where the authors proved it for the divergence case under different assumptions and with $ T \ge A$.
\item
Moreover, observe that in \eqref{OI}  the presence   of the integral $\ds \int_0^\delta \int_0^1 \frac{v_T^2(a,x)}{k}dxda$ is related to the presence of the term $\beta(a,x) v(t,0,x)$ in the equation of \eqref{h=0}. In fact,  estimating such a term using the method of characteristic lines, we obtain the previous integral. Obviously, if $v_T(a,x)=0$ a.e. in $(0,\delta) \times (0,1)$, we obtain the classical observability inequality.
\end{enumerate}
\end{Remark}

\vspace{0.3cm}

Before proving Proposition \ref{obser.} we will give some
results that will be very helpful. As a first step we
introduce the following class of functions
\[
{\cal W}:=\Big\{ v\;\text{solution of \eqref{h=0}}\;\big|\;v_T \in
D(\mathcal A^2)\Big\}, \] where \[D(\mathcal A^2) = \Big\{u \in
D(\mathcal A) \;\big|\; \mathcal A u \in
D(\mathcal A)\;\Big\}.
\]

Observe that $D({\cal A}^2)$ is densely
defined in $D({\cal A})$ (see, for example, \cite[Lemma 7.2]{b}) and
hence in $L^2_{\frac{1}{k}}(Q_{A,1})$. 
Obviously,
\[\begin{aligned}
{\cal W}= C^1\big([0,T]\:;D(\mathcal A)\big)&\subset \mathcal{V}:=L^2\big(Q_{T,A}; \cH^2_{{\frac{1}{k}}}(0,1)\big) \cap H^1\big(0, T; H^1(0,A;H^1_{{\frac{1}{k}}}(0,1))\big)\big)
 \subset \cal{U}.
\end{aligned}\]

\begin{Proposition}[Caccioppoli's inequality]\label{caccio} Assume Hypothesis $\ref{BAss01}$ or $\ref{BAss02}$.
Let $\omega'$ and $\omega$ two open subintervals of $(0,1)$ such
that $\omega ' \subset\subset \omega\subset\subset (0,1)$. Let $\psi(t,x):=\Theta(t,a)\Psi(x)$, where $\Theta$ is defined in \eqref{571}
and $\Psi\in C^1(0,1)$  is a strictly negative function.
Then, there exist two  strictly  positive constants $C$ and $s_0$ such that, for all $s \ge s_0$,
\begin{equation}\label{caccioeq}
\begin{aligned}
\int_{0}^T\int_0^A \int _{\omega'}   v_x^2e^{2s\psi } dxdadt
\ &\leq \ C\left( \int_{0}^T\int_0^A \int _{\omega}   v^2  dxdadt + \int_Q f^2 e^{2s\psi } dxdadt\right)\\
& \leq \ C\left( \int_{0}^T\int_0^A \int _{\omega}  \frac{ v^2 }{k} dxdadt + \int_Q f^2 \frac{e^{2s\psi }}{k} dxdadt\right),
\end{aligned}
\end{equation}
for every
solution $v$ of \eqref{adjoint}.
\end{Proposition}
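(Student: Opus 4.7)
The plan is to follow the classical Caccioppoli-type argument: introduce a spatial cutoff, differentiate the weighted $L^2$-norm in $t$ and $a$, use the equation to exchange time derivatives for a second spatial derivative, and then integrate by parts to generate $\int \xi^2 e^{2s\psi} k v_x^2$. All transverse boundary contributions vanish for free.

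First I would pick $\xi \in C^\infty_c(\omega)$ with $0\le\xi\le 1$ and $\xi\equiv 1$ on $\omega'$. Then I would consider the identity
\[
\int_Q \bigl(\partial_t+\partial_a\bigr)\bigl(\xi^2 e^{2s\psi}v^2\bigr)\,dx\,da\,dt
= \int_0^A\!\!\int_0^1 \bigl[\xi^2 e^{2s\psi}v^2\bigr]_{t=0}^{t=T}dx\,da
+\int_0^T\!\!\int_0^1 \bigl[\xi^2 e^{2s\psi}v^2\bigr]_{a=0}^{a=A}dx\,dt.
\]
The right-hand side is zero: at $t=0,T$ and $a=0$ the factor $\Theta(t,a)\to+\infty$ while $\Psi<0$, so $e^{2s\psi}\to 0$, and at $a=A$ we have $v\equiv 0$ by \eqref{adjoint}. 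Expanding the derivatives on the left and using $v_t+v_a=-kv_{xx}+\mu v+f$ from the equation, this yields
\[
2\!\int_Q\! \xi^2 e^{2s\psi}v(-kv_{xx}+\mu v+f)\,dx\,da\,dt
=-2s\!\int_Q\!(\psi_t+\psi_a)\xi^2 e^{2s\psi}v^2\,dx\,da\,dt.
\]

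Next I would integrate by parts in $x$ the term with $v_{xx}$ (no boundary terms because $\supp\xi\subset\omega\subset\subset(0,1)$) to obtain
\[
-2\int_0^1 \xi^2 e^{2s\psi}vkv_{xx}\,dx
= 2\int_0^1 \xi^2 e^{2s\psi}kv_x^2\,dx
+2\int_0^1 \bigl[2\xi\xi_xk+2s\xi^2\psi_x k+\xi^2 k_x\bigr]e^{2s\psi}v v_x\,dx.
\]
All the cross terms in $vv_x$ can be handled with Young's inequality $|vv_x|\le \varepsilon k v_x^2+\frac{1}{4\varepsilon k}(\cdots)v^2$, absorbing $\frac{1}{2}\int \xi^2 e^{2s\psi}kv_x^2$ into the main term. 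The resulting weights are of the form $s^j\Theta^j e^{2s\psi}$ (with $j\le 2$) restricted to $\omega$, where $\Psi\le -c_0<0$ and $\Psi_x$ is bounded; since $\Theta^j e^{-2sc_0\Theta}$ is bounded on $(0,\infty)$, each such weight is bounded by a constant $C=C(s)$ on $\omega$. Similarly the $f$-term is estimated as $2\int\xi^2 e^{2s\psi}vf\le \int\xi^2 e^{2s\psi}v^2+\int\xi^2 e^{2s\psi}f^2$, and the $\mu v^2$ term is trivial. Collecting everything,
\[
\int_Q \xi^2 e^{2s\psi}k v_x^2\,dx\,da\,dt
\le C\!\int_0^T\!\!\int_0^A\!\!\int_\omega v^2\,dx\,da\,dt
+ C\!\int_Q f^2 e^{2s\psi}\,dx\,da\,dt.
\]

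Finally, since $\omega'\subset\subset\omega\subset\subset(0,1)$ and $k\in C([0,1])$ with $k>0$ on $(0,1)$, there exist $0<k_{\min}\le k_{\max}$ such that $k_{\min}\le k\le k_{\max}$ on $\overline{\omega}$. Using $\xi\equiv 1$ on $\omega'$ and $k\ge k_{\min}$ on $\omega'$ gives the first inequality in \eqref{caccioeq}; passing from $v^2$ to $v^2/k$ and from $f^2 e^{2s\psi}$ to $f^2 e^{2s\psi}/k$ on the right simply costs a factor $k_{\max}$, giving the second. The main delicate point is checking that no boundary term is missed—particularly at $a=A$ and at $t=0,T$, $a=0$—and that the $s$-dependent constants coming from $s^2\Theta^2 e^{2s\Theta\Psi}$ can indeed be made uniform in $(t,a)$ on $\omega$; both follow from $\Psi\le -c_0<0$ on $\overline{\omega}$ and the boundary/initial conditions built into \eqref{adjoint} and the weight $\Theta$.
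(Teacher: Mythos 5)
Your proposal is correct and follows essentially the same route as the paper's proof: a cutoff $\xi$ supported in $\omega$, the vanishing of the $(t,a)$-boundary terms of $\int\xi^2e^{2s\psi}v^2$ (via $\Theta\to\infty$, $\Psi<0$ and $v(t,A,x)=0$), substitution of the equation, integration by parts in $x$, Young's inequality to absorb the cross terms, and boundedness of the weights $s^j\Theta^je^{2s\Theta\Psi}$ on $\bar\omega$ because $\Psi\le -c_0<0$ there; the paper merely packages the derivative as $\frac{d}{dt}$ and integrates the $vv_a$ term by parts in $a$ instead of writing $\partial_t+\partial_a$ from the start. Only cosmetic remarks: for the $f$-term of the second inequality one uses $\sup_{[0,1]}k$ (finite since $k\in C([0,1])$) rather than $\sup_{\bar\omega}k$, and the constant is in fact uniform in $s\ge s_0$ by the very bound $\sup_{u>0}u^je^{-2c_0u}<\infty$ you invoke.
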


\begin{proof}
Let us consider a smooth function $\xi: [0,1] \to \Bbb R$ such that
  $$\begin{cases}
    0 \leq \xi (x)  \leq 1, &  \text{ for all } x \in [0,1], \\
    \xi (x) = 1 ,  &   x \in \omega', \\
    \xi (x)=0, &     x \in (0,1)\setminus\omega.
    \end{cases}$$
Then, integrating by parts one has
\begin{equation}\nonumber
\begin{aligned}
0
=&
\int_0^T\!\!\!\frac{d}{dt}\left(\int_0^A\int_0^1(\xi e^{s\psi})^2v^2dxda\right)dt
\\[3pt]
&=
\int_Q 2s\psi_t(\xi e^{s\psi})^2v^2 + 2(\xi e^{s\psi})^2v(-v_a-kv_{xx}+ \mu v+f) \:dxdadt
\\[3pt]
&=\; 2s\int_Q \psi_t(\xi e^{s\psi})^2v^2dxdadt+ 2s\int_Q \psi_a(\xi e^{s\psi})^2v^2dxdadt+ 2\int_Q
\left(\xi^2e^{2s\psi}k\right)_xvv_xdxdadt \\[3pt]
&+ 2\int_Q
(\xi^2e^{2s\psi}k)v_x^2dxdadt+2\int_Q
\xi^2e^{2s\psi}\mu v^2dxdadt +2\int_Q
\xi^2e^{2s\psi}fvdxdadt.
\end{aligned}
\end{equation}
Hence, using Young's inequality
\begin{equation}\nonumber
\begin{aligned}
2\int_Q (\xi^2e^{2s\psi}k)v_x^2dxdadt
&=
-2s\int_Q \psi_t\left(\xi e^{s\psi}\right)^2v^2dxdadt- 2s\int_Q \psi_a(\xi e^{s\psi})^2v^2dxdadt
\\[3pt]&-2\int_Q \left(\xi^2e^{2s\psi}k\right)_x\frac{\xi e^{s\psi}\sqrt{k}}{\xi e^{s\psi}\sqrt{k}}\:vv_x\:dxdadt-2\int_Q
\xi^2e^{2s\psi}\mu v^2dxdadt\\
&-2\int_Q
\xi^2e^{2s\psi}fvdxdadt
\\[3pt]
&\le -2s\int_Q \psi_t(\xi e^{s\psi})^2v^2dxdadt - 2s\int_Q \psi_a(\xi e^{s\psi})^2v^2dxdadt\\
&+ 4\int_Q \left(\xi
e^{s\psi}\sqrt{k}\:\right)_x^2v^2dxdadt+\int_Q
(\xi^2e^{2s\psi}k)v_x^2dxdadt
\\&+(2\|\mu\|_{L^\infty(Q)}+1)\int_Q
\xi^2v^2dxdadt +\int_Q
\xi^2e^{2s\psi}f^2dxdadt.
\end{aligned}
\end{equation}
Thus,
\begin{equation}\nonumber
\begin{aligned}
&\inf_{\omega'}\{k\}\int_0^T\int_0^A\int_{\omega'}e^{2s\psi}v_x^2dxdadt \\[3pt]&\le
\sup_{\omega\times(0,T)}\Big\{\left| 4\left(\xi
e^{s\psi}\sqrt{k}\:\right)_x^2-2s(\psi_t+\psi_a)(\xi
e^{s\psi})^2\right|\Big\} \int_0^T\int_0^A\int_{\omega}v^2dxdadt+\int_Q
f^2e^{2s\psi } dxdadt.
\end{aligned}
\end{equation}
\end{proof}
\begin{Proposition}[Caccioppoli's inequality]\label{caccio1} Assume Hypothesis $\ref{Ass021}$ and suppose that $\ds \frac{k'}{\sqrt k} \in L^\infty_{\text{loc}} ([0,1] \setminus\{x_0\})$ if Hypothesis $\ref{Ass0}$ holds.
Let $\omega'$ and $\omega$ two open subintervals of $(0,1)$ such
that $\omega ' \subset\subset \omega\subset\subset (0,1)$  and $x_0 \not
\in \overline{\omega}$. Let $\psi(t,x):=\Theta(t,a)\Psi(x)$, where $\Theta$ is defined in \eqref{571}
and $\Psi\in C^1(0,1)$  is a strictly negative function.
Then the thesis of Proposition $\ref{caccio}$ holds.
\end{Proposition}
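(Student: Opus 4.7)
The plan is to reproduce the proof of Proposition~\ref{caccio} essentially verbatim, exploiting the hypothesis $x_0 \notin \overline{\omega}$ to bypass the interior degeneracy entirely on the support of the cut-off. First I would choose a smooth function $\xi:[0,1] \to [0,1]$ with $\xi \equiv 1$ on $\omega'$ and $\supp \xi \subset \omega$. Since $\omega \subset\subset (0,1) \setminus \{x_0\}$, on the support of $\xi$ the coefficient $k$ is of class $C^2$ (by Hypothesis~\ref{Ass021}) and is bounded and bounded away from zero; moreover $k' \in L^\infty(\omega)$ automatically under Hypothesis~\ref{Ass01}, while the additional assumption $\frac{k'}{\sqrt{k}} \in L^\infty_{\text{loc}}([0,1]\setminus\{x_0\})$ provides the same local bound under Hypothesis~\ref{Ass0}. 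This is the whole content of the assumption $x_0 \notin \overline{\omega}$: all coefficients appearing below are uniformly controlled on $\omega$.

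Next, following the argument of Proposition~\ref{caccio}, I would compute
\[
0 = \int_0^T \frac{d}{dt}\left(\int_0^A \int_0^1 (\xi e^{s\psi})^2 v^2 \, dx\, da\right) dt,
\]
which vanishes because $\psi(t,a,x) \to -\infty$ as $t \to 0^+, T^-$, and $\xi(0)=\xi(1)=0$. Substituting $v_t = -v_a - k v_{xx} + \mu v + f$ from \eqref{adjoint} and integrating by parts in $x$ (with no boundary terms at $x=0,1$ since $\xi$ vanishes there) and in $a$ (the boundary contributions at $a=0$ and $a=A$ being absorbed as in the original proof, using $v(t,A,x)=0$ and the behaviour of $\psi$) yields an identity of the shape
\[
2\int_Q \xi^2 e^{2s\psi} k\, v_x^2 \, dx\, da\, dt = -2s\int_Q(\psi_t+\psi_a)(\xi e^{s\psi})^2 v^2 - 2\int_Q (\xi^2 e^{2s\psi} k)_x v v_x - 2\int_Q \xi^2 e^{2s\psi}(\mu v + f)v.
\]

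Then, applying Young's inequality to the cross term $(\xi^2 e^{2s\psi} k)_x v v_x$ exactly as in Proposition~\ref{caccio}, half of the gradient term is absorbed into the left-hand side, while the remainder produces a term of the form $4 (\xi e^{s\psi}\sqrt{k})_x^2\, v^2$. Expanding this square produces the factor $\frac{k'}{2\sqrt{k}}$, which is bounded on $\omega$ thanks to the observations above. All remaining coefficients ($s\psi_t$, $s\psi_a$, $\mu$, $\xi_x$) are smooth and uniformly bounded on $\omega \times (0,T) \times (0,A)$, so we arrive at
\[
\inf_{\omega'}\{k\}\int_0^T\int_0^A\int_{\omega'} e^{2s\psi} v_x^2 \, dx\, da\, dt \le C\int_0^T\int_0^A \int_\omega v^2 \, dx\, da\, dt + \int_Q f^2 e^{2s\psi}\, dx\, da\, dt.
\]
Dividing by $\inf_{\omega'} k > 0$ yields the first inequality in \eqref{caccioeq}, and a further multiplication by $\sup_\omega k < \infty$ on the right (so as to introduce $\frac{1}{k}$) gives the second.

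The only potentially delicate point, and the reason for the separate statement, is the control of $(\sqrt{k})_x = \frac{k'}{2\sqrt{k}}$ on $\supp\xi$: in the weakly degenerate setting one only has $k \in W^{1,1}(0,1)$, so globally this quantity is not in $L^\infty$. The locally $L^\infty$ hypothesis away from $x_0$, together with the geometric condition $x_0 \notin \overline{\omega}$, is precisely what prevents this term from blowing up when one expands $(\xi e^{s\psi}\sqrt{k})_x^2$; once this is in hand the proof is identical to the boundary-degenerate case.
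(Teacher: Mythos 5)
Your proposal is correct and follows essentially the same route as the paper: repeat the proof of Proposition \ref{caccio} with a cut-off supported away from $x_0$, the only delicate point being the term $\bigl(\xi e^{s\psi}\sqrt{k}\bigr)_x$, which is controlled because $(k')^2/k$ is bounded on $\omega$ (by $k\in W^{1,\infty}$ in the strongly degenerate case, and by the extra local assumption $k'/\sqrt{k}\in L^\infty_{\mathrm{loc}}([0,1]\setminus\{x_0\})$ in the weakly degenerate one). The only slip is the incidental claim that $k$ is $C^2$ on $\supp\xi$ (the hypotheses give only $W^{1,1}$ or $W^{1,\infty}$ regularity plus the condition on $(x-x_0)k'/k$), but this is not used anywhere in your argument, which relies correctly only on $k$ being bounded away from zero and $k'/\sqrt{k}$ being bounded on $\omega$.
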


The proof of the previous result follows the one of Proposition \ref{caccio}. We underline only that,
in this case, $\left(\xi
e^{s\psi}\sqrt{k}\right)_x$ can be estimated by
\[
C\left(e^{2s\psi}+s^2(\psi_x)^2e^{2s\psi}+ e^{2s\psi}\frac{(k')^2}{k}\right)
\]
and
$\displaystyle\frac{(k')^2}{k}$ exists and is bounded in $\omega$ thanks to the assumptions (recall that $x_0\not \in \bar \omega$).
\vspace{0.3cm}

With the aid of Theorems \ref{Cor1}, \ref{Cor1'}, \ref{Cor11} and Propositions \ref{caccio}, \ref{caccio1}, we can now show $\omega-$local Carleman estimates  for \eqref{adjoint}.

\begin{Theorem}\label{Cor2} Assume Hypothesis $\ref{BAss01}$ and suppose that $\omega$ satisfies \eqref{omega}. Then,
there exist two  strictly positive constants $C$ and $s_0$ such that every
solution $v$ of \eqref{adjoint} in
$
\mathcal {V}_1
$ 
satisfies, for all $s \ge s_0$,
\[
\begin{aligned}
\int_{Q}\left(s \Theta v_x^2
                + s^3\Theta^3\text{\small$\displaystyle\Big(\frac{x}{k}\Big)^2$\normalsize}
                  v^2\right)e^{2s\varphi}dxdadt
&\le
C\left(\int_{Q}f^{2}\text{\small$\frac{~e^{2s\Phi}}{k}$\normalsize}~dxdadt+ \int_0^T \int_0^A\int_ \omega \frac{v^2}{k} dx dadt\right).
\end{aligned}\]
\end{Theorem}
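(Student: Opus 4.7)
The plan is to combine the global degenerate Carleman estimate of Theorem \ref{Cor1} (which controls $v$ on all of $(0,1)$ but produces a boundary term at the non-degenerate endpoint $x=1$) with the non-degenerate Carleman estimate of Theorem \ref{nondegenere} applied on a subdomain of the form $(\alpha_1,1)$. Two smooth cutoff functions localized inside $\omega$ play the double role of making the bad trace at $x=1$ disappear and of confining the extra source terms produced by differentiating them to a compact region where Caccioppoli's inequality (Proposition \ref{caccio}) applies. Concretely, I would fix three points $\alpha<\alpha_1<\alpha_2<\alpha_3<\rho$ inside $\omega$ together with $\xi,\eta\in C^\infty([0,1])$ satisfying $\xi\equiv 1$ on $[0,\alpha_2]$, $\xi\equiv 0$ on $[\alpha_3,1]$, $\eta\equiv 0$ on $[0,\alpha_1]$, $\eta\equiv 1$ on $[\alpha_2,1]$.

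For the degenerate part, the function $v_1:=\xi v$ solves \eqref{adjoint} on $(0,1)$ with source $\tilde f_1:=\xi f+2k\xi' v_x+k\xi''v$ and homogeneous Dirichlet data at both endpoints; the condition at $x=1$ is automatic since $\xi(1)=\xi'(1)=0$, so the trace term at $x=1$ appearing in Theorem \ref{Cor1} vanishes when we apply that estimate to $v_1$. On $[0,\alpha_2]$ we have $\xi\equiv 1$, hence the LHS of the estimate bounds the desired integral on $(0,\alpha_2)$. The extra contributions from $\xi'$ and $\xi''$ are supported in $[\alpha_2,\alpha_3]\subset\omega$, where $k$ is bounded above and below and $e^{2s\varphi}$ is bounded; one factor reproduces $C\int_Q f^{2}e^{2s\varphi}/k$, while the remaining piece is bounded by $\int_{Q_{T,A}}\int_{\omega}(v^{2}+v_x^{2})\,dxdadt$, whose $v_x^2$--part is absorbed by Proposition \ref{caccio} (applied with a slightly larger subinterval of $\omega$) into $\int_{Q_{T,A}}\int_{\omega}v^{2}/k+\int_Q f^{2}e^{2s\varphi}/k$.

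For the non-degenerate part I would apply Theorem \ref{nondegenere} on the subdomain $(\alpha_1,1)$, where $k\in C^1$ and strictly positive, to $v_2:=\eta v$. By construction $v_2(t,a,\alpha_1)=0$ and $(v_2)_x(t,a,\alpha_1)=\eta'(\alpha_1)v(t,a,\alpha_1)=0$ by smoothness of $\eta$, while at $x=1$ the trace term equals $-s\kappa k(1)\phi(1)e^{2s\Phi(1)}v_x^{2}(t,a,1)\le 0$ and therefore strengthens rather than weakens the inequality. The cutoff source is again supported in $[\alpha_1,\alpha_2]\subset\omega$ and is absorbed as in the previous step. This delivers the desired estimate on $(\alpha_2,1)$ with weights $\phi$, $e^{2s\Phi}$; since on the compact region $[\alpha_2,1]$ the quantities $x/k$, $\Theta/\phi$ and $\varphi-\Phi$ are uniformly bounded, the LHS integrand of Theorem \ref{Cor2} restricted to $(\alpha_2,1)$ is pointwise dominated by $C(s\phi v_x^{2}+s^{3}\phi^{3}v^{2})e^{2s\Phi}$.

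Adding the two partial estimates along the decomposition $(0,1)=(0,\alpha_2)\cup(\alpha_2,1)$ then yields the full inequality. The main technical obstacle I anticipate is the bookkeeping of the two different weight systems $\varphi$ and $\Phi$ on the transition zone $(\alpha_2,1)$: one has to check that $\Theta e^{2s\varphi}$ and $\phi e^{2s\Phi}$, together with the corresponding cubic expressions, are uniformly comparable there, which relies on the fact that the exponents differ only by bounded multiples of $\Theta$ once the degenerate endpoint is excluded. A secondary difficulty is to organize the choice of the intermediate points $\alpha_1,\alpha_2,\alpha_3$ so that every invocation of Proposition \ref{caccio} uses a pair $\omega'\subset\subset\omega$ compatible with the supports of $\xi'$, $\xi''$, $\eta'$, $\eta''$.
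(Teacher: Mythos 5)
Your proposal is correct and follows essentially the same route as the paper: cut off near the degenerate endpoint and apply Theorem \ref{Cor1} to $\xi v$ (so the trace at $x=1$ disappears), apply the nondegenerate Theorem \ref{nondegenere} to $\eta v$ on a subinterval away from $x=0$, absorb the cutoff commutator terms with Proposition \ref{caccio}, and compare the two weight systems as in \eqref{stimaphi}; the only difference is that your overlapping cutoffs ($\xi\equiv 1$ on $[0,\alpha_2]$, $\eta\equiv 1$ on $[\alpha_2,1]$) let you skip the paper's third localization step (the cutoff $\tau$ handling the transition zone). One caution: the needed comparison is the one-sided, $s$-uniform estimate $e^{2s\varphi}\le C e^{2s\Phi}$ and $\left(\frac{x}{k}\right)^2 e^{2s\varphi}\le C e^{2s\Phi}$ on the compact region, exactly as in \eqref{stimaphi}; mere boundedness of $\varphi-\Phi$ by a multiple of $\Theta$ is not enough (the sign matters and is arranged through the constants in the weights, as in \cite{fm1}).
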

\begin{proof}
Let us consider a smooth function $\xi: [0,1] \to \Bbb R$ such that
  $$\begin{cases}
    0 \leq \xi (x)  \leq 1, &  \text{ for all } x \in [0,1], \\
    \xi (x) = 1 ,  &   x \in [0, (2\alpha +\rho)/3], \\
    \xi (x)=0, &     x \in [(\alpha +2\rho)/3,1].
    \end{cases}$$
    We define $w(t,a,x):= \xi(x)v(t,a, x)$ where $v\in\cal{V}_1$ satisfies \eqref{adjoint}.
    Then $w$   satisfies
    \[
    \begin{cases}
      w_t +w_a+k  w_{xx}- \mu w= \xi f + k (\xi_{xx} v + 2 \xi_xv_x)=:h,&
      (t,a, x) \in Q,
      \\[5pt]
  w(t,a,0)=  w(t,a,1)=0, & (t,a) \in Q_{T,A}.
    \end{cases}
    \]
    Thus, applying Theorem \ref{Cor1} and Proposition \ref{caccio},
    \begin{equation}\label{add1}
    \begin{aligned}
&\int_0^T\int_0^A \int_0^{\frac{2\alpha+ \rho}{3}}\left(s \Theta v_x^2
                + s^3\Theta^3\text{\small$\displaystyle\Big(\frac{x}{k}\Big)^2$\normalsize}
                  v^2\right)e^{2s\varphi}dxdadt \\
                  &=\int_0^T\int_0^A \int_0^{\frac{2\alpha+ \rho}{3}}\left(s \Theta w_x^2
                + s^3\Theta^3\text{\small$\displaystyle\Big(\frac{x}{k}\Big)^2$\normalsize}
                  w^2\right)e^{2s\varphi}dxdadt\\
&\le
                  \int_{Q}\left(s \Theta w_x^2
                + s^3\Theta^3\text{\small$\displaystyle\Big(\frac{x}{k}\Big)^2$\normalsize}
                  w^2\right)e^{2s\varphi}dxdadt
\le
C\int_{Q}h^{2}\text{\small$\frac{~e^{2s\varphi}}{k}$\normalsize}~dxdadt
\\&\le C\left( \int_{Q} f^2 \frac{e^{2s\varphi} }{k} dxdadt + \int_{0}^T\int_0^A \int _{\omega'}  \frac{ v^2 }{k} dxdadt + \int_{0}^T\int_0^A \int _{\omega'}  v_x^2 e^{2s\varphi} dxdadt \right)
\\&
\le C \left( \int_{Q} f^2 \frac{e^{2s\varphi}}{k} dxdadt+ \int_{0}^T\int_0^A \int _{\omega}  \frac{ v^2 }{k} dxdadt \right),
\end{aligned}
\end{equation}
where $\omega':= \ds\left(\frac{2\alpha +\rho}{3}, \frac{\alpha +2\rho}{3}\right)$.

Now, consider $z= \eta v$, where $\eta = 1-\xi$ and take $\bar \alpha \in (0, \alpha)$. Then $z$   satisfies
    \begin{equation}\label{problemz}
    \begin{cases}
      z_t +z_a+k z_{xx}- \mu z= \eta f + k (\eta_{xx} v+ 2 \eta_xv_x)=:h,&
      (t,a, x) \in Q_{T,A}\times (\bar \alpha, 1)=: \bar Q,
      \\[5pt]
  z(t,a,\bar\alpha)=  z(t,a,1)=0, & (t,a) \in Q_{T,A}.
    \end{cases}
    \end{equation}
    Clearly the equation satisfied by $z$ is not degenerate, thus applying Theorem \ref{nondegenere} and Proposition \ref{caccio}, one has
    \[
    \begin{aligned}
&\int_{\bar Q}(s^{3}\phi^{3}z^{2}+s\phi z_{x}^{2})e^{2s\Phi} dxdadt \leq C \int_{\bar Q}h^{2}e^{2s\Phi}dxdadt
\\
 &\le C\left( \int_{\bar Q}f^2 e^{2s\Phi}dxdadt + \int_0^T\int_0^A \int_{\omega'} (v^2 + v_x^2)e^{2s\Phi}dxdadt\right)\\
 &\le C \left( \int_Q\frac{f^{2}}{k}e^{2s\Phi}dxdadt+  \int_0^T\int_0^A \int_{\omega} \frac{v^2}{k}dxdadt\right).
\end{aligned}
    \]
    Hence
    \[ 
      \begin{aligned}
  &  \int_0^T\int_0^A \int_{\frac{\alpha+ 2\rho}{3}}^1   (s^{3}\phi^{3}v^{2}+s\phi v_{x}^{2})e^{2s\Phi} dxdadt=
 \int_0^T\int_0^A \int_{\frac{\alpha+ 2\rho}{3}}^1   (s^{3}\phi^{3}z^{2}+s\phi z_{x}^{2})e^{2s\Phi} dxdadt\\
 &\le C \left( \int_Q\frac{f^{2}}{k}e^{2s\Phi}dxdadt+  \int_0^T\int_0^A \int_{\omega} \frac{v^2}{k}dxdadt\right),
 \end{aligned}
    \]
    for a  strictly positive constant $C$.
Proceeding, for example, as in \cite{fm1}  one can prove the existence of $\varsigma>0$, such that,  for
all $(t,a,x)\in [0,T]\times[0,A]\times[\bar \alpha,1]$, we have
\begin{equation}\label{stimaphi}
e^{2s\varphi}\leq\varsigma e^{2s\Phi},
\left(\frac{x}{k(x)}\right)^2e^{2s\varphi}\leq\varsigma e^{2s\Phi}.
\end{equation}
Thus, for a  strictly  positive constant $C$,
   \begin{equation}\label{add2}
    \begin{aligned}
&\int_0^T\int_0^A \int_{\frac{\alpha+ 2\rho}{3}}^1 \left(s \Theta v_x^2
                + s^3\Theta^3\left(\frac{x}{k}\right)^2
                  v^2\right)e^{2s\varphi}dxdadt \\
                  &\le C\left( \int_0^T\int_0^A \int_{\frac{\alpha+ 2\rho}{3}}^1   (s^{3}\phi^{3}v^{2}+s\phi v_{x}^{2})e^{2s\Phi} dxdadt\right)\\
                 &
\le C \left( \int_{Q} f^2 \frac{e^{2s\Phi}}{k} dxdadt+ \int_{0}^T\int_0^A \int _{\omega}  \frac{ v^2 }{k} dxdadt \right).
\end{aligned}
\end{equation}
Now, consider $\tilde \alpha \in (\alpha, (2\alpha +\rho)/3)$, $\tilde \rho \in ((\alpha +2\rho)/3, \rho)$ and a smooth function $\tau: [0,1] \to \Bbb R$ such that
  $$\begin{cases}
    0 \leq \tau (x)  \leq 1, &  \text{ for all } x \in [0,1], \\
    \tau(x) = 1 ,  &   x \in [ (2\alpha +\rho)/3, (\alpha +2\rho)/3], \\
    \tau (x)=0, &     x \in [0, \tilde \alpha] \cup [\tilde \rho,1],
    \end{cases}$$
    and define $\zeta(t,a,x) := \tau(x)v(t,a,x)$. Clearly, $\zeta$ satisfies \eqref{problemz} with $h:= \tau f + k (\tau_{xx} v+ 2 \tau_xv_x)$. Observe that in this case $\tau_ x, \tau_{xx} \not \equiv 0$ in $\bar\omega:= \ds\left(\tilde\alpha, \frac{2\alpha +\rho}{3}\right) \cup \left( \frac{\alpha +2\rho}{3}, \tilde\rho\right)$. As before, by Theorem \ref{nondegenere}, Proposition \ref{caccio} and \eqref{stimaphi}, we have
      \begin{equation}\label{add3}
    \begin{aligned}
&\int_0^T\int_0^A \int_{\frac{2\alpha+ \rho}{3}}^{\frac{\alpha+ 2\rho}{3}}\left(s \Theta v_x^2
                + s^3\Theta^3\left(\frac{x}{k}\right)^2
                  v^2\right)e^{2s\varphi}dxdadt \\
                  &\le C\left( \int_0^T\int_0^A \int_{\frac{2\alpha+ \rho}{3}}^{\frac{\alpha+ 2\rho}{3}} (s^{3}\phi^{3}v^{2}+s\phi v_{x}^{2})e^{2s\Phi} dxdadt\right)\\
               &   = C\left(\int_0^T\int_0^A \int_{\frac{2\alpha+ \rho}{3}}^{\frac{\alpha+ 2\rho}{3}}   (s^{3}\phi^{3}\zeta^{2}+s\phi \zeta_{x}^{2})e^{2s\Phi} dxdadt\right)
                  \\
                 &
\le C \left( \int_{Q} f^2 \frac{e^{2s\Phi}}{k} dxdadt+ \int_{0}^T\int_0^A \int _{\omega}  \frac{ v^2 }{k} dxdadt \right).
\end{aligned}
\end{equation}
Adding \eqref{add1}, \eqref{add2} and \eqref{add3}, the thesis follows.
\end{proof}
Proceeding as before one can prove

\begin{Theorem}\label{Cor12} Assume Hypothesis $\ref{BAss02}$ and suppose that $\omega$ satisfies \eqref{omega}. Then,
there exist two  strictly positive constants $C$ and $s_0$ such that every
solution $v$ of \eqref{adjoint} in
$
\mathcal {V}_1
$
satisfies, for all $s \ge s_0$,
\begin{equation}\nonumber
\begin{aligned}
&\int_{Q}\left(s \Theta v_x^2
                + s^3\Theta^3\left(\frac{1-x}{k}\right)^2
                  v^2\right)e^{2s\varphi}dxdadt
\\&\le
C\left(\int_{Q}f^{2}\frac{~e^{2s\Phi}}{k}dxdadt+ \int_0^T \int_0^A\int_ \omega \frac{v^2}{k} dx dadt\right).
\end{aligned}\end{equation}
\end{Theorem}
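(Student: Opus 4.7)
My plan is to mirror almost verbatim the proof of Theorem \ref{Cor2}, exploiting the symmetry between the degeneracy at $x=0$ (Hypothesis \ref{BAss01}) and at $x=1$ (Hypothesis \ref{BAss02}). The only changes are that the degenerate Carleman estimate Theorem \ref{Cor1'} replaces Theorem \ref{Cor1}, the weight $\bar\varphi$ replaces $\varphi$, and the factor $\left(\frac{1-x}{k}\right)^2$ replaces $\left(\frac{x}{k}\right)^2$; accordingly, the cut-off functions will be reflected so that the piece on which the degenerate estimate is applied is a neighborhood of $x=1$ rather than of $x=0$.

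More precisely, I would pick a smooth $\xi:[0,1]\to\R$ with $0\le\xi\le 1$, equal to $1$ on $[(\alpha+2\rho)/3,1]$ and equal to $0$ on $[0,(2\alpha+\rho)/3]$, and set $w:=\xi v$. Then $w\in\V1$ satisfies
\[
w_t+w_a+k w_{xx}-\mu w = \xi f + k(\xi_{xx}v+2\xi_x v_x)=:h,
\]
with $w(t,a,0)=w(t,a,1)=0$, and $h$ is supported in $\omega':=\left(\tfrac{2\alpha+\rho}{3},\tfrac{\alpha+2\rho}{3}\right)\subset\subset\omega$. Applying Theorem \ref{Cor1'} to $w$ and then Caccioppoli's inequality (Proposition \ref{caccio}) with $\psi=\bar\varphi$ to absorb the $v_x$-contribution on $\omega'$, I would obtain the desired estimate for $v$ on $[(\alpha+2\rho)/3,1]$. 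On the complementary subinterval, I set $z:=(1-\xi)v$, which vanishes near $x=1$ and therefore solves a non-degenerate equation on a set $\bar Q$ of the form $Q_{T,A}\times(0,\bar\rho)$ with $\bar\rho\in(\rho,1)$; applying Theorem \ref{nondegenere} and Caccioppoli's inequality yields an analogous bound with the $(\phi,\Phi)$-weights, and a third cut-off $\tau$ supported in $[\tilde\alpha,\tilde\rho]\subset\omega$ handles the middle slice by the same non-degenerate Carleman estimate.

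The final step is to glue the three contributions. For this I need the pointwise comparison, analogous to \eqref{stimaphi}, between the degenerate weight $\bar\varphi$ and the non-degenerate weight $\Phi$: here it will take the form
\[
e^{2s\bar\varphi}\le\varsigma\, e^{2s\Phi},\qquad \left(\frac{1-x}{k(x)}\right)^2 e^{2s\bar\varphi}\le\varsigma\, e^{2s\Phi}
\]
uniformly on $[0,T]\times[0,A]\times[0,\bar\rho]$ for some $\varsigma>0$, which converts the non-degenerate Carleman bound into a bound in the $\bar\varphi$-weight. Summing the three estimates produces the claimed inequality. The main technical obstacle I foresee is precisely verifying this weight-comparison inequality away from $x=1$; in Theorem \ref{Cor2} this was routine because the singular factor $x/k$ is harmless away from $x=0$, and here one needs the symmetric fact that $(1-x)/k(x)$ is bounded away from $x=1$, combined with the boundedness of $\bar p$ on compact subsets of $[0,1)$, to control $\bar\varphi$ by $\Phi$. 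Once this comparison is in place, the rest of the argument reduces to the same chain of cut-off/Caccioppoli estimates used for Theorem \ref{Cor2}.
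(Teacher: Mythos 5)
Your proposal is correct and coincides with the paper's intended argument: the paper proves Theorem \ref{Cor12} simply by remarking that one proceeds as in the proof of Theorem \ref{Cor2}, i.e.\ exactly the mirrored scheme you describe (reflected cut-offs so the degenerate estimate of Theorem \ref{Cor1'} with weight $\bar\varphi$ is applied near $x=1$, the non-degenerate Theorem \ref{nondegenere} plus Caccioppoli on the pieces away from $x=1$, and the analogue of the weight comparison \eqref{stimaphi} on $[0,T]\times[0,A]\times[0,\bar\rho]$ to glue the estimates). No changes are needed.
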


The $\omega-$local Carleman estimates given in Theorems \ref{Cor2} and \ref{Cor12} hold also if $k$ degenerates in the interior of the space domain:
\begin{Theorem}\label{Cor13}Assume  Hypothesis $\ref{Ass021}$ and $\eqref{aggiuntivastrana}$ if Hypothesis $\ref{Ass0}$ holds. Suppose that $\omega$ satisfies \eqref{omega0} or \eqref{omega_new}. Then,
there exist two  strictly  positive constants $C$ and $s_0$ such that every
solution $v$ of \eqref{adjoint} in
$
\mathcal {V}_2
$ 
satisfies, for all $s \ge s_0$,
\begin{equation}\nonumber
\begin{aligned}
&\int_{Q} \left(s\Theta (v_x)^2 + s^3 \Theta^3
\left(\frac{x-x_0}{k} \right)^2v^2\right)e^{2s\Gamma}dxdadt
\\&\le
C\left(\int_{Q}\frac{f^2}{k}dxdadt+ \int_0^T \int_0^A\int_ \omega \frac{v^2}{k} dx dadt\right).
\end{aligned}\end{equation}
\end{Theorem}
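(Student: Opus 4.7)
The plan is to mimic the proofs of Theorems \ref{Cor2} and \ref{Cor12}. I would split $[0,1]$ via smooth cut-off functions into three regions: a central one containing $x_0$ (where the operator is genuinely degenerate) and two lateral ones bounded away from $x_0$ (where the operator is non-degenerate). On the central region I apply the degenerate Carleman inequality \eqref{car} of Theorem \ref{Cor11}; on the lateral ones the non-degenerate estimate \eqref{570} of Theorem \ref{nondegenere}. The $v_x^2$ source terms created by the cut-offs are absorbed through Proposition \ref{caccio1}, and a weight-comparison argument between $\Gamma$ and $\Phi$ on the lateral regions converts the three partial estimates into a single estimate with the weight $\Gamma$.

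By Remark \ref{beta1} it is enough to treat case \eqref{omega_new}, so assume $\omega = \omega_1 \cup \omega_2$ with $\omega_i = (\lambda_i, \rho_i)$, $\rho_1 < x_0 < \lambda_2$, and $x_0 \notin \overline{\omega_1 \cup \omega_2}$. Pick auxiliary points $\rho_1 < \rho_1' < \rho_1'' < x_0 < \lambda_2'' < \lambda_2' < \lambda_2$ and smooth cut-offs $\xi, \eta_L, \eta_R : [0,1] \to [0,1]$ with $\xi \equiv 1$ on $[\rho_1'', \lambda_2'']$ and $\supp \xi \subset [\rho_1', \lambda_2']$, $\eta_L \equiv 1$ on $[0, \rho_1']$ and $\supp \eta_L \subset [0, \rho_1'']$, $\eta_R \equiv 1$ on $[\lambda_2', 1]$ and $\supp \eta_R \subset [\lambda_2'', 1]$; an additional cut-off playing the role of the function $\tau$ in the proof of Theorem \ref{Cor2} handles the intermediate zones $[\rho_1', \rho_1''] \cup [\lambda_2'', \lambda_2']$. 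Every cut-off derivative is supported in a compact subset of $\omega \setminus \{x_0\}$. Setting $w := \xi v$, $z_L := \eta_L v$, $z_R := \eta_R v$, the function $w$ lies in $\mathcal V_2$ (its support is a neighborhood of $x_0$) and solves \eqref{adjoint} with source $\xi f + k(\xi_{xx} v + 2 \xi_x v_x)$; the functions $z_L, z_R$ solve Dirichlet problems on compact subintervals of $[0,1]$ on which $k$ is bounded below, hence governed by a non-degenerate operator.

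Apply Theorem \ref{Cor11} to $w$ and Theorem \ref{nondegenere} to $z_L$ and $z_R$, and use Proposition \ref{caccio1}, whose hypothesis $x_0 \notin \overline{\omega'}$ is satisfied because the cut-off derivatives avoid $x_0$, to absorb the $v_x^2$ contributions from the sources by $C \int_0^T \int_0^A \int_\omega v^2/k\, dxdadt$. This yields three partial inequalities: one in $\Gamma$-weighted form on $[\rho_1'', \lambda_2'']$ and two in $\Phi$-weighted form on $[0, \rho_1']$ and $[\lambda_2', 1]$. On the two lateral regions the weights $\gamma$ from \eqref{c_11} and $\Psi$ from \eqref{571} are both smooth, strictly negative, and bounded, and $(x-x_0)/k$ is bounded; arguing as for \eqref{stimaphi} and tuning the parameters $d_1, d_2, R$ in $\gamma$ jointly with $\kappa, \fd$ in $\Psi$, one obtains a constant $\varsigma > 0$ such that
\[
\left(s\Theta v_x^2 + s^3 \Theta^3 \Big(\frac{x-x_0}{k}\Big)^2 v^2\right) e^{2s\Gamma} \le \varsigma \left(s\phi v_x^2 + s^3 \phi^3 v^2\right) e^{2s\Phi}
\]
pointwise on these regions, uniformly in $s \ge s_0$. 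This converts the lateral estimates into the $\Gamma$-weighted form required by \eqref{car}, and summing the three contributions gives the claim.

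I expect the weight-comparison to be the main obstacle: the parameters $d_1, d_2, R$ in \eqref{c_11} and $\kappa, \fd$ in \eqref{571} must be chosen jointly so that $\gamma(x) \le \Psi(x)$ pointwise on the lateral compact sets, so that $e^{2s\Gamma} \le C e^{2s\Phi}$ holds uniformly in $s$ there. Once this comparison is secured, the three partial estimates combine as in \eqref{add1}--\eqref{add3} of the proof of Theorem \ref{Cor2} to yield the full estimate.
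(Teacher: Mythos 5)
Your decomposition idea is the right one, but two steps break down as written. First, the geometry of your cut-offs defeats the Caccioppoli absorption: you place all the transition zones at the auxiliary points $\rho_1<\rho_1'<\rho_1''<x_0<\lambda_2''<\lambda_2'<\lambda_2$, i.e. inside the gap $(\rho_1,\lambda_2)$ between $\omega_1=(\lambda_1,\rho_1)$ and $\omega_2=(\lambda_2,\rho_2)$. These sets are disjoint from $\omega$, so your claim that every cut-off derivative is supported in a compact subset of $\omega\setminus\{x_0\}$ is false, and Proposition \ref{caccio1} — which requires $\omega'\subset\subset\omega$, not merely $x_0\notin\overline{\omega'}$ — cannot turn the $v^2+v_x^2$ terms generated by $\xi_x,\xi_{xx}$, $\eta_{L,x}$, etc. into $C\int_0^T\int_0^A\int_\omega v^2/k\,dxdadt$; you would be left with local terms on the uncontrolled gap. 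In the paper the transition points $\bar\lambda_i<\bar\rho_i$ are chosen \emph{inside} $\omega_i$, so the region where $\xi\equiv1$ spans the whole gap (including $x_0$) plus a piece of each $\omega_i$, and every cut-off derivative lives in $\omega$, where Caccioppoli applies.

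Second, the left lateral region cannot be handled symmetrically to the right one. Your $z_L=\eta_L v$ equals $v$ near $x=0$, so $z_{L,x}(t,a,0)\neq0$ in general, and the boundary term of the nondegenerate Carleman estimate at the \emph{left} endpoint enters with the bad sign (in \eqref{570} or \eqref{570Newnondiv} the contribution $+s\kappa\big[ke^{2s\Phi}\phi(z_x)^2\big](x=0)$ sits on the right-hand side and can be neither dropped nor absorbed); only the term at the right endpoint is nonpositive, which is why the right region works. This is exactly why the paper treats $[0,\tilde\lambda_1]$ through the odd reflection across $x=0$ (the functions $W,\tilde f,\tilde\mu$ and $\tilde k$ on $(-1,1)$) together with a cut-off $\zeta$ supported in $(-\bar\rho_1,\bar\rho_1)$, so that $Z=\zeta W$ satisfies $Z_x(t,a,\pm\rho_1)=0$ and no boundary terms survive; your proposal contains no substitute for this step. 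Related to this, on the lateral regions Theorem \ref{nondegenere} is not the right tool: it requires $k\in C^1$ and strictly positive, while under Hypothesis \ref{Ass0} one only has $k\in W^{1,1}(0,1)$; the paper applies Theorem \ref{nondegenereDebole}, whose hypothesis $(a_1)$ is precisely the assumption \eqref{aggiuntivastrana} appearing in the statement and never used in your argument (in the strongly degenerate case $(a_2)$ suffices). The weight comparison you describe is indeed carried out as in \eqref{prima}--\eqref{seconda}, but it does not repair the misplaced cut-offs or the missing reflection.
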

\begin{proof} First of all assume that  $\omega=(\alpha, \rho) \subset (0,1)$ is such that $x_0 \in \omega$ and take $\omega_i$, $i=1,2$, as in Remark \ref{beta1}. Now, fix $\bar \lambda_i, \bar \rho_i  \in \omega_i=(\lambda_i, \rho_i)$, $i=1,2$, such that $\bar \lambda_ i <  \bar \rho_i$ and  consider a smooth
function $\xi:[0,1]\to[0,1]$ such that
\[
\xi(x)=\begin{cases} 0&x\in [0,\bar\lambda_1],\\
1 & x\in[\tilde \lambda_1,\tilde \lambda_2],\\
0&x\in [\bar\rho_2,1],
\end{cases}
\]
where $\tilde \lambda_i=(\bar \lambda_i+\bar \rho_i)/2$, $i=1,2$. 
Then, define $w:= \xi v$, where $v$ is any fixed solution of
\eqref{adjoint}. Hence, neglecting the final--time datum (of no
interest in this context), $w$ satisfies
\[
\begin{cases}
w_t + w_a + k  w_{xx}- \mu w =\xi f + k (\xi _{xx} v + 2\xi _x  v_x )=:F,&
(t,x) \in Q, \\
w(t,a, 0)= w(t,a, 1)=0, & t \in Q_{T,A}.
\end{cases}
\]
Applying Theorem \ref{Cor11} and using the fact that $w\equiv0$ in a
neighborhood of $x=0$  and $x=1$, we have
\begin{equation}\label{car91}
\begin{aligned}
\int_Q\Big( s \Theta   (w_x)^2 + s^3 \Theta^3
   \left(\frac{x-x_0}{k}\right)^2\ w^2 \Big)
    e^{2s \Gamma} \, dx da dt
       \le C\int_{Q}\frac{e^{2s \Gamma}}{k} F^2  dxdadt,
\end{aligned}
\end{equation}
for all $s \ge s_0$. Then, using the definition of $\xi$  and in
particular the fact that  $\xi_x$ and  $\xi_{xx}$ are supported in
$\hat \omega$, where  $\hat \omega:= (\bar \lambda_1, \tilde \lambda_1) \cup( \tilde \lambda_2, \bar \beta_2) $, we can write
\begin{equation}\label{fa}
\frac{F^2}{k}\le 2\frac{f^2}{k} + 2 k(\xi _{xx} v +2 \xi _x v_x)^2 \le  2\frac{f^2}{k}+ C( v^2+
(v_x)^2)\chi_{\hat \omega}.
\end{equation}
Hence, we find
\begin{equation}\label{stimacar1}
\begin{aligned}
&\int_0^T\int_0^A\int_{\tilde\lambda_1}^{\tilde \lambda_2}\left( s \Theta (v_x)^{2} + s^3
\Theta ^3 \left(\frac{x-x_0}{k}\right)^2 v^{2}\right)
e^{{2s\Gamma}} dxdadt\\
&=\int_0^T\int_0^A\int_{\tilde\lambda_1}^{\tilde \lambda_2}\Big( s \Theta   (w_x)^2 + s^3
\Theta^3 \left(\frac{x-x_0}{k}\right)^2 w^2 \Big)
e^{2s \Gamma} \, dxda dt\\
&\le C  \left( \int_0^T\int_0^A \int_{\hat \omega}e^{2s \Gamma}( v^2+
(v_x)^2)dxdadt+ \int_Q f^2 \frac{e^{2s\Gamma }}{k} dxdadt\right).\\
& \mbox{ (by Proposition \ref{caccio1}, since
$\hat\omega\subset \subset \omega$)}\\
&\le C \left( \int_{0}^T\int_0^A \int _{\omega}  \frac{ v^2 }{k} dxdadt + \int_Q\frac{f^2}{k} dxdadt\right).
\end{aligned}
\end{equation}
Now, consider a smooth function $\eta: [0,1] \to [0,1]$ such that
\[
\eta(x) =\begin{cases} 0& x\in [0,\bar\lambda_2],\\
1& x\in [\tilde\lambda_2, 1],
\end{cases}
\]
and define $z:= \eta v$. Then $z$ satisfies
\begin{equation}\label{eq-z*}
\begin{cases}
z_t + z_a+k  z_{xx}-\mu z= h,  &(t,x) \in Q_{T,A}\times (\lambda_2,1),\\
z(t,a, \lambda_2)= z(t,a, 1)=0, & t \in Q_{T,A},
\end{cases}
\end{equation}
with $h:=\eta f +k (\eta _{xx} v + 2\eta _x  v_x)\in L^2\big((0,T)\times
(\lambda_2, 1)\big)$. 
\\
Since the problem is {\em non degenerate} (observe that $x\in (\lambda_2,1)$) ,
we can apply Theorem \ref{nondegenereDebole}, with $(0,1)$ replaced by $(\lambda_2, 1)$ and Proposition
\ref{caccio1}, obtaining that there exist two  strictly positive constants $C$ and $s_0$ such that, for all $s \ge s_0$,
\begin{equation}\label{standa}
\begin{aligned}
&\int_0^T\int_0^A\int_{\lambda_2}^1 \left(s\Theta (z_x)^2 + s^3 \Theta^3
 z^2\right)e^{2s\Phi}dxdadt
\leq  C \!\int_0^T\!\int_0^A\!\int_{\lambda_2}^1 h^{2}e^{2s\Phi}dxdadt\\
&\le C \left(\!\int_0^T\!\int_0^A\!\int_{\tilde \omega}e^{2s\Phi}( v^2+ (v_x)^2)dxdadt+\!\! \int_Q f^2 \frac{e^{2s\Phi }}{k} dxdadt\right)\\
&\le C \left( \int_{0}^T\int_0^A \int _{\lambda_2}^{\beta_2}  \frac{ v^2 }{k} dxdadt + \int_Q f^2 \frac{e^{2s\Phi }}{k} dxdadt\right)\\
&\le C \left( \int_{0}^T\int_0^A \int _{\omega}  \frac{ v^2 }{k} dxdadt + \int_Q f^2 \frac{e^{2s\Phi }}{k} dxdadt\right),
\end{aligned}
\end{equation}
where $\tilde \omega =(\bar\lambda_2, \tilde\lambda_2)$ and  $\Phi$ is related to $  (\lambda_2,1)$. Observe that the boundary term which appears in the original estimate is nonpositive and thus is neglected.

Now, for a suitable choice of $d_1$ (see, for example, \cite{fm1}), there exists a  strictly positive constant $C$, 
such that
\begin{equation}\label{prima}
e^{2s\Gamma(t,x)} \le C e^{2s\Phi(t,x)}
\end{equation}
and
\begin{equation}\label{seconda}
\left(\frac{x-x_0}{k(x)}\right)^2e^{2s\Gamma(t,x)} \le C e^{2s
\Phi(t,x)}
\end{equation}
for all $(t,x) \in Q_{T,A}\times \left[\lambda_2, 1\right]$.
Thus, by \eqref{prima} and \eqref{seconda}, via \eqref{standa}, we
find
\[
\begin{aligned}
&\int_0^T\int_0^A\int_{\lambda_2}^1 \Big(s \Theta (z_x)^2
 + s^3 \Theta^3
     \left( \frac{x-x_0}{k}\right)^2z^2\Big) e^{2s\Gamma}dxdadt \\
& \le C \left( \int_{0}^T\int_0^A \int _{\omega}  \frac{ v^2 }{k} dxdadt + \int_Q f^2 \frac{e^{2s\Phi }}{k} dxdadt\right),
\end{aligned}
\]
for a  strictly positive constant $C$ and $s$ large enough. Hence, by
definition of $z$ and by the inequality above, we get
\begin{equation}\label{stimacar21}
\begin{aligned}
&\int_0^T\int_0^A\int_{\tilde\lambda_2}^1 \Big(s \Theta  (v_x)^2 + s^3 \Theta^3
\left(\frac{x-x_0}{k}\right)^2v^2\Big) e^{2s\Gamma} dxdadt\\
& \le \int_0^T\int_0^A\int_{\lambda_2}^1 \Big(s \Theta  (z_x)^2 + s^3 \Theta^3
\left( \frac{x-x_0}{k}\right)^2z^2\Big) e^{2s\Gamma}dxdadt
\\
&\le C \left( \int_{0}^T\int_0^A \int _{\omega}  \frac{ v^2 }{k} dxdadt + \int_Q f^2 \frac{e^{2s\Phi }}{k} dxdadt\right).
\end{aligned}
\end{equation}
Thus \eqref{stimacar1} and \eqref{stimacar21}  imply
\begin{equation}\label{carin01}
\begin{aligned}
&\int_0^T \int_0^A\int _{\tilde\lambda_1}^{1}  \Big( s \Theta  (v_x)^2 + s^3 \Theta^3
\left(\frac{x-x_0}{k}\right)^2  v^2 \Big) e^{2s \Gamma } \, dxda dt
\\
&\le C \left( \int_{0}^T\int_0^A \int _{\omega}  \frac{ v^2 }{k} dxdadt + \int_Q \frac{f^2}{k}dxdadt\right).
\end{aligned}
\end{equation}
To complete the
proof it is sufficient to prove a similar inequality for
$x\in[0,\tilde\lambda_1]$. To this aim, we use the reflection procedure
 of \cite{fm} or \cite{fm1}, considering the functions
\[
W(t,a,x):= \begin{cases} v(t,a,x), & x \in [0,1],\\
-v(t,a,-x), & x \in [-1,0],
\end{cases}
\]
\[
\tilde f(t,a,x):= \begin{cases} f(t,a,x), & x \in [0,1],\\
-f(t,a,-x), & x \in [-1,0],
\end{cases}
\]
and
\[
\tilde \mu(t,a,x):= \begin{cases} \mu(t,a,x), & x \in [0,1],\\
-\mu(t,a,-x), & x \in [-1,0],
\end{cases}
\]
so that $W$ satisfies the problem
\[
\begin{cases}
W_t +W_a +\tilde k W_{xx} - \tilde \mu W= \tilde f, &(t,x) \in  Q_{T,A}\times (-1,1),\\
W(t,a,-1)=W(t,a,1) =0, & t \in Q_{T,A}.
\end{cases}
\]
Here $\tilde k$ is as in \eqref{tildek}.

Now, consider a cut off function $\zeta: [-1,1] \to  [0,1]$ such that
\[\zeta (x) =\begin{cases} 0 & x\in[-1,-\bar \rho_1],\\
1& x\in [-\tilde\lambda_1, \tilde\lambda_1],\\
0&x\in [\bar \rho_1,1],
\end{cases}
\]
and define $Z:=\zeta W$. Then $Z$ satisfies
\begin{equation}\label{eq-Z*}
\begin{cases}
Z_t + Z_a+ \tilde kZ_{xx} -\tilde \mu Z=\tilde h,  &(t,x) \in Q_{T,A}\times (-\rho_1,\rho_1),\\
Z(t,a,-\rho_1)= Z(t,a,\rho_1)=0, & t \in Q_{T,A},
\end{cases}
\end{equation}
where $\tilde h=\zeta \tilde f+ \tilde k(\rho_{xx}W+2\tilde \rho_xW_x)$.
Now, applying 
 the analogue of Theorem \ref{nondegenereDebole} on $(- \rho_1,
\rho_1)$ in place of $(0,1)$, using the definition of $W$, the fact that $Z_x(t, a,-\rho_1)=Z_x(t,a,
\rho_1)=0$, analogous estimates of \eqref{prima} and \eqref{seconda} and since $\zeta$ is
supported in $\left[-\bar \rho_1, -\tilde \lambda_1\right] \cup\left[\tilde\lambda_1, \bar \rho_1\right]$, we get
\[
\begin{aligned}
& \int_0^T\int_0^A\int_{0}^{\tilde \lambda_1}  \left(s\Theta (W_x)^2 + s^3
\Theta^3
\left(\frac{x-x_0}{k} \right)^2W^2\right)e^{2s\Gamma}dxdadt\\
&= \int_0^T\int_0^A\int_{0}^{\tilde \lambda_1}  \left(s\Theta (Z_x)^2 + s^3
\Theta^3
\left(\frac{x-x_0}{k} \right)^2Z^2\right)e^{2s\Gamma}dxdadt\\
&\le C \int_0^T\int_0^A\int_{0}^{\rho_1}\left(s\Theta (Z_x)^2 + s^3 \Theta^3
Z^2\right)e^{2s\Phi}dxdadt\\
&\le C \int_0^T\int_0^A\int_{-\rho_1}^{\rho_1}\left(s\Theta (Z_x)^2 + s^3 \Theta^3
Z^2\right)e^{2s\Phi}dxdadt\\
& \le C
\int_0^T\int_0^A\int_{-\rho_1}^{\rho_1} \tilde h^{2}\frac{e^{2s\Phi}}{\tilde k}dxdadt\le C \int_0^T\int_0^A\int_{-\rho_1}^{\rho_1} \tilde f^{2}\frac{e^{2s\Phi}}{\tilde k}dxdadt \\
&+ C \int_0^T\int_0^A \int_{-\bar \rho_1}^{-\tilde\lambda_1}(
W^2+ (W_x)^2)e^{2s\Phi}dxdadt + C\int_0^T\int_0^A \int_{\tilde \lambda_1}^{ \bar \rho_1}(W^2+ (W_x)^2)e^{2s\Phi}dxdadt\\
&\le C \int_0^T\int_0^A\int_{-\rho_1}^{\rho_1} \frac{\tilde f^{2}}{\tilde k}dxdadt + C\int_0^T\int_0^A \int_{-\rho_1}^{- \lambda_1}W^2dxdadt +C\int_0^T\int_0^A\int_{\lambda_1}^{ \rho_1} W^2dxdadt \\
& \mbox{ (by Propositions \ref{caccio1} and since $\tilde f(t,a,x)= -f(t,a,-x)$, for $x <0$) }\\
& \le C \int_0^T\int_0^A\int_0^1 \frac{ f^2}{k}dxdadt +C\int_0^T\int_0^A\int_\omega v^2 dxdadt, 
\end{aligned}
\]
for some  strictly positive constants $C$ and $s$ large enough. Here  $\Phi$ is related to $  (-\rho_1,\rho_1)$.

 Hence, by definitions of $Z$, $W$ and $\zeta$, and using the previous inequality one has
\begin{equation}\label{car101}
\begin{aligned}
&\int_0^T\int_0^A\int_{0}^{\tilde\lambda_1} \left(s\Theta (v_x)^2 + s^3 \Theta^3
\left(\frac{x-x_0}{k} \right)^2v^2\right)e^{2s\Gamma}dxdadt\\
&= \int_0^T\int_0^A\int_{0}^{\tilde\lambda_1} \left(s\Theta (W_x)^2 + s^3 \Theta^3
\left(\frac{x-x_0}{k} \right)^2W^2\right)e^{2s\Gamma}dxdadt\\
&\le C\left( \int_Q \frac{f^{2}}{ k}dxdadt + \int_0^T\int_0^A \int_{\omega} \frac{v^2}{k}dxdadt\right).
\end{aligned}
\end{equation}
Moreover, by
\eqref{carin01} and \eqref{car101}, the conclusion follows.

Nothing changes in the proof if $\omega= \omega_1\cup \omega_2$ and each of these intervals lye on different sides of $x_0$, as the assumption implies.
\end{proof}
\begin{Remark}\label{remarkultimo}
Observe that the results of Theorems \ref{Cor2}, \ref{Cor12}, \ref{Cor13} still hold true if we substitute the interval $(0,T)$ with a general interval $(T_1,T_2)$, provided that $\mu$ and $\beta$ satisfy the required assumptions. In this case, in place of the function $\Theta$ defined in \eqref{571}, we have to consider the weight function
\[
\tilde \Theta(t,a):= \frac{1}{(t-T_1)^4 (T_2-t)^4a^4}.
\]
\end{Remark}
Using the previous local Carleman estimates one can prove the next observability inequalities.
\begin{Theorem}\label{Theorem4.4} Assume Hypothesis $\ref{BAss01}$ or $\ref{BAss02}$ and Hypothesis $\ref{conditionbeta}$ with $T > \bar a$. Suppose that $\omega$ satisfies \eqref{omega}. Then, for every $\delta \in (0,A)$, 
there exists a  strictly positive constant $C =C(\delta)$  such that every
solution $v$ of \eqref{h=0} in
$\mathcal V_1$
satisfies
\begin{equation}\nonumber
\begin{aligned}
 \int_0^A\int_0^1  \frac{1}{k}v^2( T-\bar a,a,x) dxda  &\le C\int_0^{T} \int_0^\delta \int_0^1 \frac{1}{k}v^2(t,a,x) dxdadt\\
 &+ C\left( \int_0^{T}\int_0^1 \frac{v_T^2(a,x)}{k}dxda+ \int_0^T \int_0^A\int_ \omega \frac{v^2}{k} dx dadt\right).
\end{aligned}\end{equation}
Moreover, if $v_T(a,x)=0$ for all $(a,x) \in (0,  T) \times (0,1)$, one has
\[
\begin{aligned}
 \int_0^A\!\!\int_0^1  \frac{1}{k}v^2(T -\bar a,a,x) dxda  &\le C\left(\int_0^{T} \int_0^\delta\!\! \int_0^1 \frac{1}{k}v^2(t,a,x) dxdadt +\!\! \int_0^T\!\! \int_0^A\!\!\int_ \omega \frac{v^2}{k} dx dadt\right).
\end{aligned}
\]
\end{Theorem}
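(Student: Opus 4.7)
The plan is to combine the $\omega$-local Carleman estimate (Theorem \ref{Cor2} under Hypothesis \ref{BAss01}, Theorem \ref{Cor12} under Hypothesis \ref{BAss02}) with a characteristic-based bound for the trace $v(t,0,x)$ and a Gronwall energy argument in time. By monotonicity of the right-hand side in $\delta$, I may assume throughout that $\delta\le\bar a$; the case $\delta>\bar a$ follows immediately from enlarging the integration domain on the RHS.

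First I would apply Theorem \ref{Cor2} (or \ref{Cor12}) to $v$ with forcing $f := -\beta(a,x)v(t,0,x)$. This produces
\[
\int_{Q}\Bigl(s\Theta v_x^2+s^3\Theta^3(x/k)^2 v^2\Bigr)e^{2s\varphi}\,dxdadt \le C\int_Q \beta^2 v^2(t,0,x)\frac{e^{2s\Phi}}{k}\,dxdadt + C\int_0^T\!\int_0^A\!\int_\omega\frac{v^2}{k}\,dxdadt.
\]
Since $\Phi<0$ implies $e^{2s\Phi}\le 1$, the source term is dominated by $C\int_0^T\!\int_0^1 v^2(t,0,x)/k\,dxdt$ thanks to $\|\beta\|_\infty<\infty$.

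Next I would bound this trace via characteristics. Setting $E(t,a):=\int_0^1 v^2(t,a,x)/k\,dx$, I follow the characteristic $s\mapsto(t_0+s,s,x)$ starting from $(t_0,0,x)$. While $s\in[0,\bar a]$, Hypothesis \ref{conditionbeta} forces $\beta(s,x)\equiv 0$, so multiplying \eqref{h=0} by $v/k$ and integrating in $x$ (using $\mu\ge 0$ and the Dirichlet conditions at $x=0,1$) yields $\frac{d}{ds}E(t_0+s,s)=2\int_0^1 v_x^2\,dx+2\int_0^1\mu v^2/k\,dx\ge 0$. Hence $E(t_0,0)\le E(t_0+s,s)$ for $0\le s\le\min(\delta,T-t_0)$. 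Integrating this inequality in $s\in(0,\delta)$ and $t_0\in(0,T-\delta)$, and treating the remaining range $t_0\in(T-\delta,T)$ by letting the characteristic exit through $t=T$ where $v=v_T(T-t_0,x)$, gives
\[
\int_0^T\!\int_0^1\!\frac{v^2(t,0,x)}{k}\,dxdt \le C\!\int_0^T\!\int_0^\delta\!\int_0^1\!\frac{v^2}{k}\,dxdadt + C\!\int_0^T\!\int_0^1\!\frac{v_T^2(a,x)}{k}\,dxda.
\]
Inserted into the Carleman estimate, this produces a master bound whose right-hand side is exactly the sum of the three terms appearing in the theorem.

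Finally I would transfer the Carleman LHS to a pointwise-in-time bound at $t=T-\bar a$. On the compact box $R:=[T-\bar a+\eta_1,T-\bar a+\eta_2]\times[\delta,A]\times[0,1]$ the weights $\Theta$ and $e^{2s\varphi}$ are bounded between positive constants; combined with the Hardy--Poincar\'e inequality $\int_0^1 v^2/k\,dx\le C\int_0^1 v_x^2\,dx$ (available under the degeneracy hypotheses on $k$ and the Dirichlet conditions at $x=0,1$), the Carleman LHS dominates $\int_R v^2/k\,dxdadt$. An energy estimate for $\mathcal{E}(t):=\int_0^A\int_0^1 v^2/k\,dxda$ is obtained by multiplying \eqref{h=0} by $v/k$ and integrating in $x$ and $a$: the boundary term at $a=A$ vanishes, the one at $a=0$ is nonnegative, and Young's inequality together with $\|\beta\|_\infty<\infty$ absorbs the nonlocal coupling, yielding $\mathcal{E}'(t)\ge -C\mathcal{E}(t)$. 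Gronwall then gives $\mathcal{E}(T-\bar a)\le C\mathcal{E}(t)$ for $t\in[T-\bar a,T]$; averaging over $t\in[T-\bar a+\eta_1,T-\bar a+\eta_2]$ and splitting the $a$-integral at $\delta$, the small-age piece is controlled by the $\int_0^T\int_0^\delta v^2/k$ term on the RHS and the large-age piece by the Carleman-based bound on $R$, closing the argument. The simpler case $v_T\equiv 0$ on $(0,T)\times(0,1)$ is identical with the $v_T$ contribution in the trace step vanishing.

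The main obstacle is the trace $v(t,0,x)$ appearing as the Carleman source at the degenerate axis $a=0$, where the weight $\Theta$ diverges so no direct bound is possible; the characteristic argument closes only because Hypothesis \ref{conditionbeta} removes the nonlocal coupling on the segment $s\in[0,\bar a]$. Careful bookkeeping is also required in splitting characteristics that exit through $t=T$ (producing the $v_T$ term) and in choosing the averaging window so that Gronwall, the Hardy--Poincar\'e inequality, and the boundedness of the Carleman weights can all be applied simultaneously on a single compact subregion.
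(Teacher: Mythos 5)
Your proposal is correct, and its skeleton coincides with the paper's: both apply the $\omega$-local Carleman estimates (Theorem \ref{Cor2} or \ref{Cor12}) to \eqref{h=0} viewed as \eqref{adjoint} with source $f=-\beta(a,x)v(t,0,x)$, both use the Hardy--Poincar\'e bound \eqref{terminenuovo11} together with positive lower bounds for $\Theta e^{2s\varphi}$ on a compact $(t,a)$-box with $a\ge\delta$ to dominate $\int\frac{v^2}{k}$ there by the Carleman left-hand side, and both reach the time level $t=T-\bar a$ through an energy inequality for $\int_0^A\int_0^1\frac{v^2}{k}\,dxda$ (the paper's multiplication of \eqref{h=0'} by $w/k$ with the weight $e^{\varsigma t}$ is exactly your Gronwall step: in both cases the good sign of the trace term at $a=0$ and Young's inequality absorb the nonlocal coupling). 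The genuine difference is the treatment of the trace term $\int\frac{v^2(t,0,x)}{k}$: the paper invokes the characteristic representation \eqref{v(0)}, $v(t,0,\cdot)=S(T-t)v_T(T-t,\cdot)$, and the boundedness of the semigroup to bound it by the $v_T$-term alone (see \eqref{t=02}), whereas you exploit the monotonicity of $\int_0^1\frac{v^2}{k}\,dx$ along characteristics on the age strip where $\beta\equiv0$ and then average, obtaining the weaker bound $C\delta^{-1}\int_0^T\int_0^\delta\int_0^1\frac{v^2}{k}+C\int_0^\delta\int_0^1\frac{v_T^2}{k}$. This is sufficient, since the extra bulk term already appears on the right-hand side of the theorem, and it has two advantages: it is more elementary (no mild-solution formula is needed), and it is valid for every $t_0\in(0,T)$, while \eqref{v(0)} is only justified by \eqref{implicitformula} for $t\ge T-\bar a$ (for smaller $t$ the nonlocal integral in \eqref{implicitformula1} does not drop out), a restriction the paper does not address when integrating over all of $Q$ in \eqref{t=02}; so your route actually sidesteps a delicate point. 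The price is a constant of order $\delta^{-1}$, which is harmless because the paper's own constant is already $\delta$-dependent through the lower bound of $e^{2s\varphi}$ on $a\in[\delta,A]$; your reduction to $\delta\le\bar a$ by monotonicity of the right-hand side, your choice of the averaging window inside $(T-\bar a,T)$, and your remark that the case $v_T\equiv0$ on $(0,T)\times(0,1)$ follows by dropping the exit-through-$t=T$ contribution are all sound.
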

\begin{proof}
Set
\begin{equation}\label{tildeT}
\tilde T:= T-\bar a.
\end{equation}
Using the method of characteristic lines, the assumption on $\beta$ and the fact that $v(t,A,x)=0$ for all $(t,x) \in Q_{T,1}$, one can compute
the following implicit formula for $v$ solution of \eqref{h=0}:
\begin{equation}\label{implicitformula}
S(T-t) v_T(T+a-t, \cdot),
\end{equation}
if $t \ge  \tilde T + a$ (observe that in this case $T+a-t \le \bar a$) and
\begin{equation}\label{implicitformula1}
v(t,a, \cdot)=\begin{cases}
S(T-t) v_T(T+a-t, \cdot)\!+\int_a^{T+a-t}S(s-a)\beta(s, \cdot)v(s+t-a, 0, \cdot) ds, &\Gamma\!= \!\bar a \\
\int_a^AS(s-a)\beta(s, \cdot)v(s+t-a, 0, \cdot) ds, & \Gamma\!= \!\Gamma_{A,T},
\end{cases}
\end{equation}
otherwise. Here  $(S(t))_{t \ge0}$ is the semigroup generated by the operator $\mathcal A_0 -\mu Id$ for all $u \in D(\mathcal A_0)$  ($Id$ is the identity operator), $\Gamma_{A,T}:= A -a +t-\tilde T$ and
\begin{equation}\label{Gamma}
\Gamma:= \min \{\bar a, \Gamma_{A,T}\}.
\end{equation}
In particular, it results
\begin{equation}\label{v(0)}
v(t,0, \cdot):= S(T-t) v_T(T-t, \cdot).
\end{equation}

Now, define, for $\varsigma >0$, the function $w= e^{\varsigma t }v$, where $v$ solves \eqref{h=0}.
Then $w$ satisfies
\begin{equation}\label{h=0'}
\begin{cases}
\ds \frac{\partial w}{\partial t} + \frac{\partial w}{\partial a}
+k(x)w_{xx}-(\mu(t, a, x)+ \varsigma) w =-\beta(a,x)w(t,0,x),& (t,x,a) \in  \tilde Q,
\\[5pt]
w(t,a,0)=w(t,a,1) =0, &(t,a) \in \tilde Q_{T,A},\\
  w(T,a,x) = e^{\varsigma T}v_T(a,x), &(a,x) \in Q_{A,1}, \\
  w(t,A,x)=0, & (t,x) \in \tilde Q_{T,1},
\end{cases}
\end{equation}
where $\tilde Q:= (\tilde T, T) \times Q_{A,1}$, $\tilde Q_{T,A}:= (\tilde T, T) \times (0,A)$ and $\tilde Q_{T,1}:= (\tilde T,T)\times (0,1)$.
Multiplying the equation of \eqref{h=0'} by $-\ds\frac{w}{k}$ and integrating by parts on $Q_t:= (\tilde T,t) \times(0,A) \times(0,1)$, it results
\begin{equation}\label{3.56idriss}
\begin{aligned}
&-\frac{1}{2}\int_{Q_{A,1}} \frac{1}{k}w^2(t,a,x) dxda + \frac{e^{\varsigma  \tilde T}}{2} \int_{Q_{A,1}}  \frac{1}{k}v^2(\tilde T,a,x) dxda + \frac{1}{2} \int_{\tilde T}^t\int_0^1  \frac{1}{k}w^2(\tau,0,x) dx d\tau\\
&+ \varsigma \int_{Q_t}  \frac{1}{k} w^2(\tau,a,x) dxdad\tau \le \int_{Q_t} \frac{1}{k}\beta w(\tau,0,x)wdxdad\tau
\\
& \le \|\beta\|_{L^\infty(Q)}\frac{1}{\epsilon}\int_{Q_t}\frac{1}{k}w^2dxdad\tau+ \epsilon A\|\beta\|_{L^\infty(Q)} \int_{\tilde T}^t\int_0^1 \frac{1}{k}w^2(\tau,0,x)dxd\tau,
\end{aligned}
\end{equation}
for $\epsilon >0$. Choosing $\ds\epsilon= \frac{1}{2\|\beta\|_{L^\infty(Q)}A}$ and $\ds \varsigma =\frac{ \|\beta\|_{L^\infty(Q)}}{\epsilon}$, we have
\[
\begin{aligned}
 \int_{Q_{A,1}}  \frac{1}{k}v^2(\tilde T,a,x) dxda  \le C\int_{Q_{A,1}} \frac{1}{k}w^2(t,a,x) dxda  \le C \int_{Q_{A,1}} \frac{1}{k}v^2(t,a,x) dxda.
\end{aligned}
\]
Now, take $\delta \in (0, A)$. Then, integrating over $\ds \left[\frac{T}{4}, \frac{3T}{4} \right]$,
\begin{equation}\label{t=0}
\begin{aligned}
 \int_{Q_{A,1}}  \frac{1}{k}v^2(\tilde T,a,x) dxda  &\le C\int_{\frac{T}{4}}^{\frac{3T}{4}} \int_{Q_{A,1}} \frac{1}{k}v^2(t,a,x) dxdadt\\
 &=  C \int_{\frac{T}{4}}^{\frac{3T}{4}} \left(\int_0^\delta + \int_\delta^A \right)\int_0^1 \frac{1}{k}v^2(t,a,x) dxdadt.
\end{aligned}
\end{equation}
Consider the term $\ds\int_{\frac{T}{4}}^{\frac{3T}{4}}  \int_\delta^A\int_0^1 \frac{1}{k}v^2(t,a,x) dxdadt$. By the Hardy - Poincar\'e inequality one has 
\begin{equation}\label{terminenuovo11}
\begin{aligned}
 \int_0^1\frac{v^2}{k}dx
& \le
C \int_0^1\frac{v^2}{x^2} dx
\le C \int_0^1 v_x^2 dx,
\end{aligned}\end{equation}
for a  strictly positive constant $C.$  Hence,
\[
\begin{aligned}
&\int_{\frac{T}{4}}^{\frac{3T}{4}}  \int_\delta^A\int_0^1 \frac{1}{k}v^2(t,a,x) dxdadt \le C \int_{\frac{T}{4}}^{\frac{3T}{4}}  \int_\delta^A\int_0^1 \Theta v_x^2e^{2s\varphi} dxdadt
\end{aligned}
\]
and, by Theorem \ref{Cor2} or \ref{Cor12},
\[
\int_{\frac{T}{4}}^{\frac{3T}{4}}  \int_\delta^A\int_0^1 \Theta v_x^2e^{2s\varphi} dxdadt\le
C\left(\int_{Q}\text{$\frac{f^2}{k}$\normalsize}~dxdadt+ \int_0^T \int_0^A\int_ \omega \frac{v^2}{k} dx dadt\right),
\]
where, in this case, $f(t,a,x):=-\beta(a,x)v(t,0,x)$.
Thus
\begin{equation}\label{t=01}
\int_{\frac{T}{4}}^{\frac{3T}{4}} \!\! \int_\delta^A\!\!\int_0^1 \frac{1}{k}v^2(t,a,x) dxdadt \le C\|\beta\|^2_{L^\infty(Q)}\!\left(\int_{Q}\frac{v^2(t,0,x)}{k}dxdadt+ \int_0^T \int_0^A\!\!\int_ \omega \frac{v^2}{k} dx dadt\right),
\end{equation}
for a  strictly positive constant $C$.
Now, using the fact that the semigroup generated by $\mathcal A_0$ is a contraction semigroup and the hypothesis on $\mu$, we have that also the semigroup generated by $\mathcal A_0 -\mu Id$ is bounded. Hence, by \eqref{v(0)},
\begin{equation}\label{t=02}
\int_{Q}\frac{v^2(t,0,x)}{k}dxdadt\le C\int_{Q_{T,1}} \frac{v_T^2(T-t,x)}{k}dxdt \le C \int_{Q_{T,1}} \frac{v_T^2(a,x)}{k}dxda.
\end{equation}
Hence, by \eqref{t=01} and \eqref{t=02}, one has
\begin{equation}\label{t=01'}
\int_{\frac{T}{4}}^{\frac{3T}{4}}\!\!  \int_\delta^A\!\!\int_0^1 \frac{1}{k}v^2(t,a,x) dxdadt \le C\|\beta\|^2_{L^\infty(Q)}\left(\int_{Q_{T,1}}\!\! \frac{v_T^2(a,x)}{k}dxda+ \int_0^T \!\!\int_0^A\!\!\int_ \omega \frac{v^2}{k} dx dadt\right),
\end{equation}
for a  strictly positive constant $C$.
From \eqref{t=0} and \eqref{t=01'}, it results
\begin{equation}\label{t=03}
\begin{aligned}
 \int_{Q_{A,1}}  \frac{1}{k}v^2(\tilde T,a,x) dxda  &\le C\int_0^T \int_0^\delta \int_0^1 \frac{1}{k}v^2(t,a,x) dxdadt\\
 &+ C\left( \int_{Q_{T,1}} \frac{v_T^2(a,x)}{k}dxda+ \int_0^T \int_0^A\int_ \omega \frac{v^2}{k} dx dadt\right).
\end{aligned}
\end{equation}

\end{proof} 
The observability inequality proved in the previous theorem still holds when $k$ degenerate at $x_0$:

\begin{Theorem}\label{CorOb1} Assume  Hypothesis $\ref{Ass021}$, Hypothesis $\ref{conditionbeta}$ and suppose that $\ds \frac{k'}{\sqrt k} \in L^\infty_{\text{loc}} ([0,1] \setminus\{x_0\})$ if Hypothesis $\ref{Ass0}$ holds with $T > \bar a$. Suppose that $\omega$ satisfies \eqref{omega0} or \eqref{omega_new}. Then, for every $\delta \in (0,A)$, 
there exists a  strictly positive constant $C =C(\delta)$ such that every
solution $v$ of \eqref{h=0} in
$\mathcal V_2$
satisfies
\begin{equation}\nonumber
\begin{aligned}
 \int_0^A\int_0^1  \frac{1}{k}v^2( T -\bar a,a,x) dxda  &\le C\int_0^{T} \int_0^\delta \int_0^1 \frac{1}{k}v^2(t,a,x) dxdadt\\
 &+ C\left( \int_0^{T}\int_0^1\frac{ v_T^2(a,x)}{k}dxda+ \int_0^T \int_0^A\int_ \omega \frac{v^2}{k} dx dadt\right).
\end{aligned}\end{equation}
Moreover, if $v_T(a,x)=0$ for all $(a,x) \in (0, T) \times (0,1)$, one has
\[
\begin{aligned}
 \int_0^A\!\!\int_0^1\!\!\  \frac{1}{k}v^2( T-\bar a,a,x) dxda  &\le C\left(\int_0^{T} \!\int_0^\delta\! \int_0^1 \!\frac{1}{k}v^2(t,a,x) dxdadt +\! \int_0^T\! \int_0^A\!\!\!\int_ \omega \!\frac{v^2}{k} dx dadt\right).
\end{aligned}
\]
\end{Theorem}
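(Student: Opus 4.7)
The strategy is to mirror the proof of Theorem \ref{Theorem4.4}, substituting the boundary Carleman estimate by the interior one (Theorem \ref{Cor13}) and adapting the Hardy--Poincar\'e step to an interior degeneracy. Set $\tilde T := T-\bar a$ as in \eqref{tildeT}. The implicit representation of $v$ via characteristic lines (formulas \eqref{implicitformula}--\eqref{v(0)}) is obtained exactly as before, since Hypothesis \ref{conditionbeta} on $\beta$ and the condition $v(t,A,x)=0$ do not depend on where $k$ degenerates.

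Next I would introduce the exponential shift $w:=e^{\varsigma t}v$, so that $w$ satisfies \eqref{h=0'}, multiply the equation by $w/k$ and integrate over $Q_t:=(\tilde T,t)\times(0,A)\times(0,1)$. All integrations by parts are justified because $v\in\mathcal V_2$, so $w(\cdot,a,\cdot)\in H^2_{\frac{1}{k},x_0}(0,1)$ and the boundary terms at $x=0,1$ vanish. Choosing $\varsigma$ and $\epsilon$ as in \eqref{3.56idriss} and then averaging in $t\in[T/4,3T/4]$ gives
\[
\int_{Q_{A,1}}\frac{v^2(\tilde T,a,x)}{k}\,dxda \le C\int_{T/4}^{3T/4}\!\!\Big(\int_0^\delta+\int_\delta^A\Big)\!\int_0^1 \frac{v^2(t,a,x)}{k}\,dxdadt,
\]
exactly as in \eqref{t=0}; only the first summand is kept as is.

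For the second summand I need the analogue of \eqref{terminenuovo11} in the interior setting: by the Hardy--Poincar\'e type inequality available for $H^1_{\frac{1}{k}}(0,1)$ when $k$ degenerates at $x_0$ (used already in the course of Theorem \ref{Cor13}, via the weight $\frac{x-x_0}{k}$), one has $\int_0^1 v^2/k\,dx\le C\int_0^1 v_x^2\,dx$ for $v\in H^1_{\frac{1}{k}}(0,1)$. Combining this with the bound $1\le C\,\Theta e^{2s\Gamma}$ on the compact time interval $[T/4,3T/4]\times[\delta,A]$ (thanks to $\Gamma<0$ and the choice of $\Theta$), I can absorb the integrand into the left-hand side of the Carleman estimate of Theorem \ref{Cor13} applied to the solution $v$ of \eqref{h=0} with source $f(t,a,x)=-\beta(a,x)v(t,0,x)$. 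This yields
\[
\int_{T/4}^{3T/4}\!\!\int_\delta^A\!\!\int_0^1 \frac{v^2}{k}\,dxdadt\le C\|\beta\|_{L^\infty(Q)}^2\int_Q \frac{v^2(t,0,x)}{k}\,dxdadt+C\int_0^T\!\!\int_0^A\!\!\int_\omega \frac{v^2}{k}\,dxdadt,
\]
where the localisation of $\omega$ around (and/or on both sides of) $x_0$ required by \eqref{omega0}--\eqref{omega_new} is exactly what Theorem \ref{Cor13} needs. Finally, \eqref{v(0)} and the fact that the semigroup generated by $\mathcal A_0-\mu\,\mathrm{Id}$ is bounded on $L^2_{\frac{1}{k}}(0,1)$ transform the $v(t,0,\cdot)$ term into $\int_0^T\int_0^1 v_T^2(a,x)/k\,dxda$, as in \eqref{t=02}. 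Collecting the estimates gives the claim, and the second statement follows immediately by taking $v_T\equiv 0$ on $(0,T)\times(0,1)$.

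The main technical point, and the only genuine deviation from the proof of Theorem \ref{Theorem4.4}, is verifying the Hardy--Poincar\'e bound $\int_0^1 v^2/k \le C\int_0^1 v_x^2$ under the interior degeneracy hypothesis: one has to split the integral at $x_0$ and use the additional assumption \eqref{dainfinito} (when $M\ge 1$) together with $\frac{k'}{\sqrt k}\in L^\infty_{\text{loc}}([0,1]\setminus\{x_0\})$ in the weakly degenerate subcase to bound $1/k$ by $C/(x-x_0)^2$ locally and to propagate the inequality across the degeneracy point. Once this is in place, every remaining step is a verbatim transcription of the argument for Theorem \ref{Theorem4.4}, with Theorem \ref{Cor13} playing the role of Theorems \ref{Cor2}/\ref{Cor12}.
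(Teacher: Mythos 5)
Your overall architecture is exactly the paper's: the paper itself proves this theorem by saying that the argument of Theorem \ref{Theorem4.4} carries over verbatim (with Theorem \ref{Cor13} in place of Theorems \ref{Cor2}/\ref{Cor12}), the only point needing a new argument being the Hardy--Poincar\'e bound \eqref{terminenuovo11} when the degeneracy is at $x_0\in(0,1)$. You correctly isolate that step, but your proposed justification of it is where the argument breaks. Bounding $1/k\le C|x-x_0|^{-2}$ near $x_0$ (which does follow from $(x-x_0)k'\le Mk$, $M<2$, not from \eqref{dainfinito}, which gives an \emph{upper} bound $k\le C|x-x_0|^{\vartheta}$, i.e.\ the wrong direction) and then invoking a Hardy inequality centered at the interior point cannot work as stated: the classical inequality $\int_0^1 v^2/(x-x_0)^2\,dx\le C\int_0^1 v_x^2\,dx$ requires $v(x_0)=0$, and elements of $H^1_{\frac{1}{k}}(0,1)$ --- in particular $v(t,a,\cdot)$ --- need not vanish at $x_0$; in the weakly degenerate case $M<1$ one even has $1/k\in L^1$ near $x_0$, so $L^2_{\frac1k}$ imposes no vanishing at all (take $v\equiv 1$ near $x_0$, vanishing only at $0,1$: the right-hand side is finite while $\int v^2/(x-x_0)^2=\infty$). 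Likewise, the assumption $k'/\sqrt{k}\in L^\infty_{\rm loc}([0,1]\setminus\{x_0\})$ plays no role here; in the paper it is used only for the Caccioppoli estimate (Proposition \ref{caccio1}) away from $x_0$.

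What the paper does instead is to split according to $M<1$ and $M\ge 1$. For $M<1$ it applies the \emph{weighted} Hardy--Poincar\'e inequality of \cite[Proposition 2.3]{fm} with the degenerate weight $p(x)=|x-x_0|^2/k$ and exponent $q=2-M>1$: the hypothesis $(x-x_0)k'\le Mk$ makes $p(x)/|x-x_0|^{q}=|x-x_0|^{M}/k$ monotone in the required sense, and this inequality is valid precisely for functions that do \emph{not} vanish at $x_0$, yielding $\int_0^1 v^2/k\,dx=\int_0^1 \frac{p}{(x-x_0)^2}v^2\,dx\le C_{HP}\int_0^1 p\,v_x^2\,dx\le C\int_0^1 v_x^2\,dx$. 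For $M\ge 1$ it invokes \cite[Lemma 3.7]{fggr} to get the same bound. So to complete your plan you must replace your ``$1/k\lesssim (x-x_0)^{-2}$ plus Hardy across $x_0$'' step by one of these weighted inequalities (or an equivalent argument handling the non-vanishing of $v$ at $x_0$); once \eqref{terminenuovo11} is secured, the rest of your transcription of the proof of Theorem \ref{Theorem4.4} is indeed the paper's proof.
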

The proof of the previous inequalities follows the one of Theorem \ref{CorOb} so we omit it. But we underline the fact that, in order to obtain \eqref{terminenuovo11} in this situation, we distinguish the cases $M<1$ and $M\geq 1$. In the former case,
define
\[
p(x) =\frac{|x-x_0|^2}{k} \quad \text {and} \quad q=2-M.
\]
Clearly, by \cite[Lemma 2.1]{fm}, $p(x) \to 0$ as $x\to x_0$ and 
$
\ds x\mapsto \frac{p(x)}{|x-x_0|^q}=\frac{|x-x_0|^{M}}{k}
$ is nonincreasing on the left of $x=x_0$ and nondecreasing on the right of $x=x_0$. Moreover,
$q>1$ since $M<1$. Hence, by the Hardy-Poincar\'{e} inequality given in \cite[Proposition 2.3]{fm}, one has
\[
\begin{aligned}
\int_0^1 \frac{v^2}{k}dx &= \int_0^1
\frac{p(x)}{(x-x_0)^2}v^2dx
\le C_{HP}\int_0^1 p (v_x)^2 dx \\
&\le  C \int_0^1 (v_x)^2 dx,
\end{aligned}
\]
for a  strictly positive constant $C$. 

If $M \ge 1$, we can apply \cite[Lemma 3.7]{fggr} obtaining again
\[
\int_0^1
\frac{v^2}{k} dx \leq C\int_0^1 (v_x)^2dx,
\]
for a  strictly positive constant $C$.

Hence, in both cases \eqref{terminenuovo11} holds also if the degeneracy is in the interior of the domain. So, proceeding as before, we obtain the  thesis.

\begin{Corollary}\label{CorOb} Assume $\bar a = T$, Hypotheses $\ref{iposezione4}$ and $\ref{conditionbeta}$. Suppose that $\omega$ satisfies \eqref{omega}, \eqref{omega0} or \eqref{omega_new}. Then, , for every $\delta \in (0,A)$, 
there exists a  strictly positive constant $C =C(\delta)$
 such that every
solution $v$ of \eqref{h=0} in
$\mathcal V_i$, $i=1,2$,
satisfies
\begin{equation}\nonumber
\begin{aligned}
 \int_0^A\int_0^1  \frac{1}{k}v^2( 0,a,x) dxda  &\le C\int_0^T \int_0^\delta \int_0^1 \frac{1}{k}v^2(t,a,x) dxdadt\\
 &+ C\left( \int_0^T\int_0^1 \frac{v_T^2(a,x)}{k}dxda+ \int_0^T \int_0^A\int_ \omega \frac{v^2}{k} dx dadt\right).
\end{aligned}\end{equation}
Moreover, if $v_T(a,x)=0$ for all $(a,x) \in (0, T) \times (0,1)$, one has
\[
\begin{aligned}
 \int_0^A\!\!\int_0^1  \frac{1}{k}v^2(0,a,x) dxda  &\le C\left(\int_0^T \int_0^\delta\!\! \int_0^1 \frac{1}{k}v^2(t,a,x) dxdadt +\!\! \int_0^T\!\! \int_0^A\!\!\int_ \omega \frac{v^2}{k} dx dadt\right).
\end{aligned}
\]
\end{Corollary}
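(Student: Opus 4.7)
\bigskip

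\textbf{Proof plan for Corollary \ref{CorOb}.}

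The statement is a borderline case of Theorems \ref{Theorem4.4} and \ref{CorOb1}: with $\bar a = T$ we have $\tilde T := T - \bar a = 0$, so the time level $T - \bar a$ at which observability is stated becomes $t=0$. The plan is therefore to mimic the proof of Theorem \ref{Theorem4.4} on the full interval $(0,T)$ instead of $(\tilde T, T)$, using whichever of Theorems \ref{Cor2}, \ref{Cor12}, \ref{Cor13} matches the degeneracy of $k$. Since $\beta \equiv 0$ on $[0,\bar a]\times[0,1] = [0,T]\times[0,1]$, the method of characteristics gives the clean identity $v(t,0,x) = S(T-t)v_T(T-t,x)$, which is \eqref{v(0)}, without any integral contribution from $\beta$.

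First, I introduce $w := e^{\varsigma t}v$, obtaining a problem of the type \eqref{h=0'} posed on $(0,T)\times(0,A)\times(0,1)$. Multiplying by $w/k$ and integrating by parts on $(0,t)\times(0,A)\times(0,1)$, using the boundary conditions $v(t,a,0)=v(t,a,1)=0$, $v(t,A,x)=0$, together with Young's inequality on the term $\int \beta\, w(t,0,x)\,w/k$ exactly as in \eqref{3.56idriss}, and choosing $\epsilon=1/(2A\|\beta\|_{L^\infty(Q)})$ and $\varsigma = \|\beta\|_{L^\infty(Q)}/\epsilon$, I get
\[
\int_{Q_{A,1}} \frac{v^2(0,a,x)}{k}\,dxda \le C\int_{Q_{A,1}} \frac{v^2(t,a,x)}{k}\,dxda
\]
for every $t\in[0,T]$. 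Averaging in $t$ over $[T/4,3T/4]$ and decomposing the $a$-integral as $\int_0^\delta + \int_\delta^A$ produces the $\int_0^\delta$-contribution appearing on the right-hand side of the claim, while reducing the problem to controlling the piece with $a\in[\delta,A]$.

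On $[T/4,3T/4]\times[\delta,A]\times(0,1)$ the weight $\Theta$ is bounded away from zero and infinity, and the Hardy--Poincaré inequality yields $\int_0^1 v^2/k\,dx \le C\int_0^1 v_x^2\,dx$ (in the interior case I split $M<1$ and $M\ge 1$ as in the proof of Theorem \ref{CorOb1}, invoking \cite[Proposition 2.3]{fm} or \cite[Lemma 3.7]{fggr} respectively). This lets me dominate the remaining piece by $\int_{T/4}^{3T/4}\int_\delta^A\int_0^1 \Theta\, v_x^2 e^{2s\varphi}\,dxdadt$ (or with $\bar\varphi$, resp.\ $\Gamma$), to which I apply Theorem \ref{Cor2}, \ref{Cor12} or \ref{Cor13} with source $f(t,a,x) = -\beta(a,x)v(t,0,x)$.

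The right-hand side of the Carleman estimate contains $\int_Q f^2/k \le \|\beta\|_{L^\infty(Q)}^2 \int_Q v(t,0,x)^2/k\,dxdadt$, and here the contraction property of the semigroup $(S(t))_{t\ge 0}$ generated by $\mathcal A_0 - \mu\,\mathrm{Id}$ combined with \eqref{v(0)} gives
\[
\int_Q \frac{v^2(t,0,x)}{k}\,dxdadt \le C\int_0^T\!\!\int_0^1 \frac{v_T^2(a,x)}{k}\,dxda,
\]
plus the observation term $\int_0^T\!\int_0^A\!\int_\omega v^2/k$. Collecting all estimates yields the desired observability inequality; the second assertion follows by taking $v_T\equiv 0$ on $(0,T)\times(0,1)$, which kills the $v_T$-term. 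The main technical obstacle is ensuring the Hardy--Poincaré constant is uniform across the three degeneracy regimes, which is precisely why \cite[Proposition 2.3]{fm} and \cite[Lemma 3.7]{fggr} are invoked separately for $M<1$ and $M\ge 1$; once this is in hand, the rest is bookkeeping already laid out in Theorem \ref{Theorem4.4}.
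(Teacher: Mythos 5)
Your plan is correct and coincides with the paper's own route: Corollary \ref{CorOb} is obtained exactly by running the proof of Theorems \ref{Theorem4.4} and \ref{CorOb1} with $\bar a=T$, i.e.\ $\tilde T=0$, on the full interval $(0,T)$ — the energy estimate \eqref{3.56idriss}, the averaging over $[T/4,3T/4]$, the split $\int_0^\delta+\int_\delta^A$, Hardy--Poincar\'e, the $\omega$-local Carleman estimates with $f=-\beta v(t,0,x)$, and \eqref{v(0)} with the boundedness of $(S(t))_{t\ge0}$ to produce the $v_T$-term. Nothing essential is missing.
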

Actually, we can improve the previous results in the following way:
\begin{Theorem}\label{CorOb1'}Assume Hypotheses $\ref{iposezione4}$ and $\ref{conditionbeta}$. Suppose that $\omega$ satisfies \eqref{omega}, \eqref{omega0} or \eqref{omega_new}.  Then, for every $\delta \in (T,A)$, 
there exists a  strictly positive constant $C= C(\delta)$  such that every
solution $v$ of \eqref{h=0} in
$\mathcal V_i$, $i=1,2$,
satisfies
\begin{equation}\nonumber
 \int_0^A\int_0^1  \frac{1}{k}v^2(T-\bar a,a,x) dxda \le 
 C\left( \int_0^\delta \int_0^1  \frac{v_T^2(a,x)}{k}dxda+ \int_0^T \int_0^A\int_ \omega \frac{v^2}{k} dx dadt\right).
\end{equation}
\end{Theorem}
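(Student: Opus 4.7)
The plan is to bootstrap from Theorem \ref{Theorem4.4} (in the boundary-degenerate case) or Theorem \ref{CorOb1} (in the interior-degenerate case, with Corollary \ref{CorOb} when $\bar a=T$) in order to absorb the volume $L^2_{1/k}$-integral of $v$ into the data and observation terms. Each of these, applied with the given $\delta\in(T,A)\subset(0,A)$, yields
\begin{align*}
\int_0^A\!\!\int_0^1\!\frac{v^2(T-\bar a,a,x)}{k}\,dxda
&\le C\!\int_0^T\!\!\int_0^\delta\!\!\int_0^1\!\frac{v^2(t,a,x)}{k}\,dxdadt \\
&\quad+ C\!\int_0^T\!\!\int_0^1\!\frac{v_T^2(a,x)}{k}\,dxda + C\!\int_0^T\!\!\int_0^A\!\!\int_\omega\!\frac{v^2}{k}\,dxdadt.
\end{align*}
Since $T<\delta$, the middle term is already dominated by $C\!\int_0^\delta\!\int_0^1 v_T^2/k\,dxda$; hence the whole task is to estimate the volume term $I_\delta:=\int_0^T\!\int_0^\delta\!\int_0^1 v^2/k\,dxdadt$ by $C\!\int_0^\delta\!\int_0^1 v_T^2/k\,dxda + C\!\int_0^T\!\int_0^A\!\int_\omega v^2/k\,dxdadt$.

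To this end I would decompose $v=v_1+v_2$, where $v_1$ and $v_2$ solve \eqref{h=0} with final data $v_T\chi_{[0,\delta]}$ and $v_T\chi_{(\delta,A]}$, respectively. For $v_1$, the implicit representations \eqref{implicitformula}--\eqref{implicitformula1} combined with \eqref{v(0)} and the boundedness of the semigroup $(S(\tau))_{\tau\ge 0}$ generated by $\mathcal A_0-\mu\,\mathrm{Id}$ on $L^2_{1/k}(0,1)$ express $v_1(t,a,\cdot)$ as a linear combination of $S(T-t)(v_T\chi_{[0,\delta]})(a+T-t,\cdot)$ and integrals of $\beta(s,\cdot)\,S(T-s-t+a)(v_T\chi_{[0,\delta]})(T+a-t-s,\cdot)$ for $s$ in a sub-interval of $(0,T+a-t)$. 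Since $v_T\chi_{[0,\delta]}$ is supported in $[0,\delta]$, every age at which $v_T$ is evaluated lies in $[0,\delta]$, and a Fubini argument gives
\[\int_0^T\!\!\int_0^\delta\!\!\int_0^1\!\frac{v_1^2}{k}\,dxdadt \le C\!\int_0^\delta\!\!\int_0^1\!\frac{v_T^2(a,x)}{k}\,dxda.\]

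For $v_2$ the crucial observation is that \eqref{v(0)} forces $v_2(t,0,\cdot)=S(T-t)(v_T\chi_{(\delta,A]})(T-t,\cdot)=0$ for every $t\in(0,T)$, because $T-t<T<\delta$ while $v_T\chi_{(\delta,A]}$ vanishes on $[0,\delta]$. Hence the nonlocal coupling $\beta(a,x)v_2(t,0,x)$ is identically zero, so $v_2$ solves \eqref{adjoint} with $f\equiv 0$ and, by its characteristic representation, $\supp v_2\subset\{a\ge\delta-T\}$. Applying the $\omega$-local Carleman estimate of Theorem \ref{Cor2}, \ref{Cor12} or \ref{Cor13} to $v_2$ with $f\equiv 0$ yields a weighted bound of $v_2$ in terms of $\int_0^T\!\int_0^A\!\int_\omega v_2^2/k$. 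Combining this with a Hardy--Poincar\'e inequality (to go from the weighted $v_{2,x}^2$ term to $v_2^2/k$), the monotonicity
\[\tfrac{1}{2}\tfrac{d}{dt}\!\int_0^A\!\!\int_0^1\!\frac{v_2^2}{k}\,dxda = \int_0^A\!\!\int_0^1\! v_{2,x}^2\,dxda+\int_0^A\!\!\int_0^1\!\frac{\mu v_2^2}{k}\,dxda\ge 0\]
(which follows from $v_2(t,0,\cdot)=v_2(t,A,\cdot)=0$ and $\mu\ge 0$, and permits bootstrapping a bound from a central compact sub-interval of $(0,T)$, where the Carleman weights are bounded below, to the full $(0,T)$), together with the identity $v_2=v-v_1$ inside the $\omega$-integral and the already-established bound on $v_1$, should give
\[\int_0^T\!\!\int_0^\delta\!\!\int_0^1\!\frac{v_2^2}{k}\,dxdadt \le C\!\int_0^T\!\!\int_0^A\!\!\int_\omega\!\frac{v^2}{k}\,dxdadt + C\!\int_0^\delta\!\!\int_0^1\!\frac{v_T^2}{k}\,dxda.\]
Adding the bounds for $v_1$ and $v_2$ via $v^2\le 2 v_1^2+2v_2^2$ then closes the estimate on $I_\delta$ and completes the proof.

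The main obstacle lies in the very last step for $v_2$: the Carleman weights $s\Theta e^{2s\psi}$ and $s^3\Theta^3 e^{2s\psi}$ degenerate at the time endpoints $t=0,T$, so passing from the weighted Carleman estimate to an unweighted $L^2_{1/k}$-bound of $v_2$ over $(0,T)\times(0,\delta)$ requires both the support property $\{a\ge\delta-T\}$ (which removes the weight degeneracy at $a=0$) and the energy monotonicity above to extend pointwise-in-$t$ bounds from a compact sub-interval to the full time interval. This is reminiscent of the scheme used in the proof of Theorem \ref{Theorem4.4}, but has to be performed carefully here for $v_2$ separately.
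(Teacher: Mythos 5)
Your reduction to estimating $I_\delta=\int_0^T\!\int_0^\delta\!\int_0^1 v^2/k$, the splitting $v=v_1+v_2$, the Fubini bound for $v_1$, and the observation that $v_2(t,0,\cdot)=0$ (so $v_2$ decouples and $\supp v_2\subset\{a\ge \delta-T+t\}$) are all sound. The genuine gap is in the last step for $v_2$: neither of your two tools controls the region $\{t \text{ near } T,\ a \text{ slightly below } \delta\}$. The Carleman weight $\Theta$ blows up as $t\to T^-$, so $e^{2s\varphi}\to 0$ there and the weighted estimate gives nothing near $t=T$; and the energy monotonicity goes the \emph{wrong} way: $\frac{d}{dt}\int\!\int v_2^2/k\ge 0$ bounds earlier times by later ones, i.e.\ ultimately by $\int_\delta^A\!\int_0^1 v_T^2/k$, which is exactly the term you are not allowed to use. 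On that corner one has explicitly $v_2(t,a,\cdot)=S(T-t)v_T(T+a-t,\cdot)$ with age argument in $(\delta,\delta+T-t)$, so any bound must involve $v_T$ on ages just above $\delta$; and the inequality you would need, $\int_0^T\!\int_0^\delta\!\int_0^1 v_2^2/k\le C\int_0^T\!\int_0^A\!\int_\omega v_2^2/k$, has no reason to hold (a high-frequency wave packet supported in ages slightly above $\delta$ and localized away from $\omega$ makes the left side of order $\|v_T\|^2/N^2$ while the right side is far smaller). So the argument as written does not close.

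The fix — and this is what the paper actually does — is never to generate the volume term over the full time interval with the \emph{same} $\delta$. The paper derives the energy estimate only over a central time window ($(T-\bar a,\,T-\bar a/4)$, or $(T/4,3T/4)$ when $T=\bar a$) and splits the age integral at $\delta-\bar a$ (resp.\ $\delta-3T/4$), not at $\delta$: the large-age piece is handled exactly as in Theorem \ref{Theorem4.4} (local Carleman estimate, Hardy--Poincar\'e, and \eqref{v(0)}), while the small-age piece is estimated \emph{directly} from the characteristics formulas \eqref{implicitformula}, \eqref{implicitformula1}, \eqref{v(0)} — the split point is tuned to the time window so that every age at which $v_T$ is evaluated stays $\le\delta$ (here $T<\delta$ is used), giving the bound $C\int_0^\delta\!\int_0^1 v_T^2/k$ with no Carleman estimate and no decomposition. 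Your scheme can be repaired in the same spirit: apply Theorem \ref{Theorem4.4} (or \ref{CorOb1}, \ref{CorOb}) with a smaller parameter $\delta'\le\delta-T$ instead of $\delta$; then on $(0,T)\times(0,\delta')$ the representation of $v$ only involves $v_T$ at ages $\le T+\delta'\le\delta$ (direct term) and $\le T<\delta$ (coupling term via \eqref{v(0)}), so $\int_0^T\!\int_0^{\delta'}\!\int_0^1 v^2/k\le C\int_0^\delta\!\int_0^1 v_T^2/k$, and both the $v_1/v_2$ decomposition and the extra Carleman step become unnecessary.
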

\begin{proof}
We distinguish between the two cases
$T=\bar a$ and $T>\bar a$.

\underline{\it If $T=\bar a$:} Taking $\delta \in (T,A)$, one has, as in \eqref{t=0},
\begin{equation}\label{t=04}
 \int_{Q_{A,1}}  \frac{1}{k}v^2(0,a,x) dxda  \le C\int_{\frac{T}{4}}^{\frac{3T}{4}} \left(\int_0^{\delta-\frac{3T}{4}} + \int_{\delta-\frac{3T}{4}}^A \right)\int_0^1 \frac{1}{k}v^2(t,a,x) dxdadt.
\end{equation}
As for \eqref{t=01'},
\begin{equation}\label{t=05}
\begin{aligned}
\int_{\frac{T}{4}}^{\frac{3T}{4}}  \int_{\delta-\frac{3T}{4}}^A\int_0^1 \frac{1}{k}v^2(t,a,x) dxdadt \le C\left( \int_{Q_{T,1}} \frac{v_T^2(a,x)}{k}dxda+ \int_0^T \int_0^A\int_ \omega \frac{v^2}{k} dx dadt\right).
\end{aligned}
\end{equation}
Now, consider the term $\ds \int_{\frac{T}{4}}^{\frac{3T}{4}}  \int_0^{\delta-\frac{3T}{4}}\int_0^1 \frac{1}{k}v^2(t,a,x) dxdadt $ and let us prove that there exists $C>0$ such that
\begin{equation}\label{t=06}
\ds \int_{\frac{T}{4}}^{\frac{3T}{4}}  \int_0^{\delta-\frac{3T}{4}}\int_0^1 \frac{1}{k}v^2(t,a,x) dxdadt  \le C \int_0^\delta \int_0^1  \frac{v_T^2(a,x)}{k}dxda.
\end{equation}
In order to prove \eqref{t=06}, we use \eqref{implicitformula} or \eqref{implicitformula1}. 
Observe, first of all, that $\ds \delta -  \frac{3T}{4} >  \frac{T}{4}$, but we do not know if $\ds \delta -  \frac{3T}{4} \ge  \frac{3T}{4}$ or $\ds \delta -  \frac{3T}{4} <  \frac{3T}{4}$.
In the last case, if 
$t \in \left[\delta - \ds \frac{3T}{4},  \frac{3T}{4}\right)$ and $ a \in \left(0, \delta - \ds \frac{3T}{4}\right)$, we have easily that  $t \ge \tilde T+a = a$ (recall that we are in the case $\tilde T = 0$); hence \eqref{implicitformula} holds.
On the other hand, if $t <\ds \delta -  \frac{3T}{4}$ we do not know if $t \ge \tilde T+a = a$ or $t < \tilde T+a = a$. Hence, we have to consider  \eqref{implicitformula} or \eqref{implicitformula1}.
Taking into account these considerations, using the assumption on $\beta$ and the boundedness of $(S(t))_{t \ge0}$, one has:

{\it If $\delta - \ds \frac{3T}{4}<\frac{3T}{4}$:}
\begin{equation}\label{t=07}
\begin{aligned}
&\ds \int_{\frac{T}{4}}^{\frac{3T}{4}}  \int_0^{\delta-\frac{3T}{4}}\int_0^1 \frac{1}{k}v^2(t,a,x) dxdadt =
&\ds \int_{\frac{T}{4}}^{\delta - \frac{3T}{4}}  \int_0^{\delta-\frac{3T}{4}}\int_0^1 \frac{1}{k}v^2(t,a,x) dxdadt \\
&+ \ds \int_{\delta - \frac{3T}{4}}^{\frac{3T}{4}}  \int_0^{\delta-\frac{3T}{4}}\int_0^1 \frac{1}{k}v^2(t,a,x) dxdadt.
\end{aligned}
\end{equation}
By \eqref{implicitformula},
\begin{equation}\label{t=071}
\begin{aligned}
 &\ds \int_{\delta - \frac{3T}{4}}^{\frac{3T}{4}}  \int_0^{\delta-\frac{3T}{4}}\int_0^1 \frac{1}{k}v^2(t,a,x) dxdadt\le C  \int_{\delta - \frac{3T}{4}}^{\frac{3T}{4}}  \int_0^{\delta-\frac{3T}{4}}\int_0^1  \frac{v_T^2(T+a-t,x)}{k}dxdadt\\
 &\le C  \int_{\frac{T}{4}}^{\frac{3T}{4}}  \int_0^{\delta-\frac{3T}{4}}\int_0^1  \frac{v_T^2(T+a-t,x)}{k}dxdadt \\
&\le C  \int_{\frac{T}{4}}^{\frac{3T}{4}}  \int_0^{\delta-\frac{3T}{4}}\int_0^1   \frac{v_T^2(a+z,x)}{k}dxdadz  \le  C \int_0^{\delta-\frac{3T}{4}}\int_{a+\frac{T}{4}}^{a+\frac{3T}{4}} \int_0^1   \frac{v_T^2(\sigma,x)}{k}dxd\sigma da
\\
&\le C \int_0^\delta \int_0^1  \frac{v_T^2(\sigma,x)}{k}dxd\sigma.
\end{aligned}
\end{equation}
Consider now the integral
\[
\ds \int_{\frac{T}{4}}^{\delta - \frac{3T}{4}}  \int_0^{\delta-\frac{3T}{4}}\int_0^1 \frac{1}{k}v^2(t,a,x) dxdadt \]
and divide it in  the following way:
\begin{equation}\label{t=08}
\begin{aligned}
\ds \int_{\frac{T}{4}}^{\delta - \frac{3T}{4}}  \int_0^{\delta-\frac{3T}{4}}\int_0^1 \frac{1}{k}v^2(t,a,x) dxdadt
&=  \int_{\frac{T}{4}}^{\delta - \frac{3T}{4}} \int_0^{t}\int_0^1 \frac{1}{k}v^2(t,a,x) dxdadt\\
&+  \int_{\frac{T}{4}}^{\delta - \frac{3T}{4}} \int_t^{\delta-\frac{3T}{4}}\int_0^1 \frac{1}{k}v^2(t,a,x) dxdadt.
\end{aligned}
\end{equation}
Proceeding as before, one can prove 
\begin{equation}\label{t=081}
\int_{\frac{T}{4}}^{\delta - \frac{3T}{4}} \int_0^t\int_0^1 \frac{1}{k}v^2(t,a,x) dxdadt\le C  \int_0^\delta \int_0^1  \frac{v_T^2(\sigma,x)}{k}dxd\sigma.
\end{equation}
Indeed, since $\tilde T=0$ and $a\le t$, \eqref{implicitformula} holds, hence
\[\begin{aligned}
&\ds \int_{\frac{T}{4}}^{\delta - \frac{3T}{4}} \int_0^t\int_0^1 \frac{1}{k}v^2(t,a,x) dxdadt\le C  \int_{\frac{T}{4}}^{\frac{3T}{4}}  \int_0^t\int_0^1  \frac{v_T^2(T+a-t,x)}{k}dxdadt \\
&\le C  \int_{\frac{T}{4}}^{\frac{3T}{4}}  \int_0^{T-z}\int_0^1   \frac{v_T^2(a+z,x)}{k}dxdadz  \le  C \int_{\frac{T}{4}}^{\frac{3T}{4}} \int_0^T \int_0^1   \frac{v_T^2(\sigma,x)}{k}dxd\sigma dz
\\
&\le C \int_0^\delta \int_0^1  \frac{v_T^2(\sigma,x)}{k}dxd\sigma.
\end{aligned}\]
Now, we estimate the second term in the right hand side  of \eqref{t=08}. First of all, assume that $\Gamma=\bar a$ (we recall that $\Gamma$ is defined in \eqref{Gamma}).  By \eqref{implicitformula1} and \eqref{v(0)}, using the assumption on $\beta$ and the boundedness of $(S(t))_{t \ge0}$, one has:
\begin{equation}\label{t=082}
\begin{aligned}
&\ds \int_{\frac{T}{4}}^{\delta - \frac{3T}{4}}  \int_t^{\delta-\frac{3T}{4}}\int_0^1 \frac{1}{k}v^2(t,a,x) dxdadt\le C\int_{\frac{T}{4}}^{\delta - \frac{3T}{4}}  \int_t^{\delta-\frac{3T}{4}}\int_0^1   \frac{v_T^2(T+a-t,x)}{k}dxdadt \\
&+  C  \int_{\frac{T}{4}}^{\delta - \frac{3T}{4}}  \int_t^{\delta-\frac{3T}{4}}\int_0^1    \left(\int_a^{T+a-t} \frac{v_T^2(T-s-t+a,x)}{k}ds \right) dxdadt \\
& \le C\int_{\frac{T}{4}}^{\frac{3T}{4}}  \int_0^{\delta-\frac{3T}{4}}\int_0^1   \frac{v_T^2(T+a-t,x)}{k}dxdadt \\
& +  C  \int_{\frac{T}{4}}^{\frac{3T}{4}}  \int_t^{\delta-\frac{3T}{4}}\int_0^1    \left(\int_{-a}^{T-t-a} \frac{v_T^2(a+z,x)}{k}dz\right) dxdadt\\
& (\text{proceeding as in \eqref{t=07}  for the first integral})\\
& \le   C\int_{0}^{\delta} \int_0^1   \frac{v_T^2(\sigma,x)}{k}dxd\sigma +  C  \int_{\frac{T}{4}}^{\frac{3T}{4}}  \int_t^{\delta-\frac{3T}{4}}\int_0^1    \left(\int_0^{T-t}\frac{v_T^2(\sigma,x)}{k}d\sigma\right) dxdadt\\
& (\text{since $T < \delta$})\\
& \le  C\int_{0}^{\delta} \int_0^1   \frac{v_T^2(\sigma,x)}{k}dxd\sigma +  C  \int_{\frac{T}{4}}^{\frac{3T}{4}}  \int_0^{\delta-\frac{3T}{4}}\int_0^1    \left(\int_0^\delta \frac{v_T^2(\sigma,x)}{k}d\sigma\right) dxdadt\\
& \le  C\int_{0}^{\delta} \int_0^1   \frac{v_T^2(\sigma,x)}{k}dxd\sigma.
\end{aligned}
\end{equation}
Now, assume that $ \Gamma= A+\bar a -a +t-T$ (this implies that $A-a \le T-t$). By \eqref{implicitformula1} and \eqref{v(0)},  one has, as before:
\begin{equation}\label{t=09}
\begin{aligned}
&\ds \int_{\frac{T}{4}}^{\delta - \frac{3T}{4}}  \int_t^{\delta-\frac{3T}{4}}\!\!\int_0^1 \frac{1}{k}v^2(t,a,x) dxdadt\\
&\le  C  \!\!\int_{\frac{T}{4}}^{\delta - \frac{3T}{4}}  \int_t^{\delta-\frac{3T}{4}}\!\!\int_0^1    \left(\int_a^A \frac{v_T^2(T-s-t+a,x)}{k}ds \right) dxdadt \\
& \le  C  \int_{\frac{T}{4}}^{\frac{3T}{4}}  \int_t^{\delta-\frac{3T}{4}}\int_0^1    \left(\int_{T-A-t}^{T-a-t} \frac{v_T^2(a+z,x)}{k}dz\right) dxdadt\\
& \le   C  \int_{\frac{T}{4}}^{\frac{3T}{4}}  \int_t^{\delta-\frac{3T}{4}}\int_0^1    \left(\int_{T-t-(A-a)}^{T-t}\frac{v_T^2(\sigma,x)}{k}d\sigma\right) dxdadt\\
& (\text{since $T < \delta$})\\
& \le C  \int_{\frac{T}{4}}^{\frac{3T}{4}}  \int_0^{\delta-\frac{3T}{4}}\int_0^1    \left(\int_0^\delta \frac{v_T^2(\sigma,x)}{k}d\sigma\right) dxdadt\\
& \le  C\int_{0}^{\delta} \int_0^1   \frac{v_T^2(\sigma,x)}{k}dxd\sigma.
\end{aligned}
\end{equation}
Hence, in every case \eqref{t=06} holds.

By \eqref{t=04}, \eqref{t=05} and \eqref{t=06}, it follows that
\[
\begin{aligned}
 &\int_{Q_{A,1}}  \frac{1}{k}v^2(0,a,x) dxda  \le C\int_{\frac{T}{4}}^{\frac{3T}{4}} \left(\int_0^{\delta-\frac{3T}{4}} + \int_{\delta-\frac{3T}{4}}^A \right)\int_0^1 \frac{1}{k}v^2(t,a,x) dxdadt\\
 &\le C\left( \int_0^\delta \int_0^1  \frac{v_T^2(a,x)}{k}dxda+ \int_0^T \int_0^A\int_ \omega \frac{v^2}{k} dx dadt\right).
\end{aligned}
\]

{\it If $\delta - \ds \frac{3T}{4}\ge\frac{3T}{4}$:} In order to obtain \eqref{t=06}, we divide the integral
\[
\ds \int_{\frac{T}{4}}^{\frac{3T}{4}}  \int_0^{\delta-\frac{3T}{4}}\int_0^1 \frac{1}{k}v^2(t,a,x) dxdadt \]
in  the following way:
\begin{equation}\label{t=08new}
\begin{aligned}
\ds \int_{\frac{T}{4}}^{\frac{3T}{4}}  \int_0^{\delta-\frac{3T}{4}}\int_0^1 \frac{1}{k}v^2(t,a,x) dxdadt
&=  \int_{\frac{T}{4}}^{\frac{3T}{4}} \int_0^{t}\int_0^1 \frac{1}{k}v^2(t,a,x) dxdadt\\
&+  \int_{\frac{T}{4}}^{ \frac{3T}{4}} \int_t^{\delta-\frac{3T}{4}}\int_0^1 \frac{1}{k}v^2(t,a,x) dxdadt.
\end{aligned}
\end{equation}

Then, proceeding as before, the thesis follows.

\underline{\it If $T>\bar a$:} We proceed as before substituting, for simplicity, $\ds \frac{T}{4}$ and $\ds \frac{3T}{4}$ with $T-\bar a$ and $T- \ds \frac{\bar a}{4}$, respectively. 
In particular, taking $\delta \in (T,A)$,  we will consider, in place of \eqref{t=04}, the following inequality:
\begin{equation}\label{t=041}
 \int_{Q_{A,1}}  \frac{1}{k}v^2(T-\bar a,a,x) dxda  \le C\int_{T-\bar a}^{T- \frac{\bar a}{4}} \left(\int_0^{\delta - \bar a} + \int_{\delta - \bar a}^A \right)\int_0^1 \frac{1}{k}v^2(t,a,x) dxdadt.
\end{equation}
Also in this case, since $t \in \ds \left( T-\bar a, T- \frac{\bar a}{4}\right)$ and $a \in (0, \delta - \bar a)$, we do not know  if $t \ge \tilde T+ a$ or $t < \tilde T + a$.  Hence, 
to prove an estimate like \eqref{t=06}, 
we have to consider different cases as before.

\end{proof}

\vspace{0.4cm}
By Theorem \ref{CorOb1'} and using a density argument, one can prove
 Proposition \ref{obser.}. As a consequence one has the following null controllability results:

\begin{Theorem}\label{NCintermediate} Assume Hypotheses $\ref{iposezione4}$ and $\ref{conditionbeta}$. Then, given $T>0$ and $y_0 \in L^2_{\frac{1}{k}}(Q_{A,1})$,  for every $\delta \in (T,A)$  there exists a control $f\in L^2_{\frac{1}{k}}(\tilde Q)$ such that the solution $y\in \mathcal U$ of 
\begin{equation} \label{1new}
\begin{cases}
\ds \frac{\partial y}{\partial t}  +\frac{\partial y}{\partial a}
-k(x)y_{xx}+\mu(t, a, x)y =f(t,x,a)\chi_{\omega}  &\quad \text{in } \tilde Q,\\
  y(t, a, 1)=y(t, a, 0)=0  &\quad \text{on } \tilde Q_{T,A},\\
 y(\tilde T , a, x)=y_0(a, x) &\quad \text{in }Q_{A,1},\\
 y(t, 0, x)=\int_0^A \beta (a, x)y (t, a, x) da  &\quad  \text{in } \tilde Q_{T,1},
\end{cases}
\end{equation}
satisfies
\[
y(T,a,x) =0 \quad \text{a.e. } (a,x) \in (\delta, A) \times (0,1).
\]
Moreover, there exists $C= C(\delta)>0$ such that
\begin{equation}\label{stimaf1}
\|f\|_{L^2_{\frac{1}{k}}(\tilde Q)} \le C \|y_0\|_{L^2_{\frac{1}{k}}(Q_{A,1})}.
\end{equation}
Here, we recall, $\tilde Q=(\tilde T,T)\times(0,A)\times(0,1)$, $\tilde Q_{T,A} = (\tilde T,T)\times (0,A)$
and $\tilde Q_{T,1}=(\tilde T,T)\times(0,1)$.
\end{Theorem}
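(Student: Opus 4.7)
The strategy is a classical HUM/penalization argument, deducing the partial null controllability from the observability inequality stated in Proposition \ref{obser.}. The first step is to specialize that inequality to final data $v_T$ supported in $(\delta, A) \times (0,1)$ (extended by zero on $(0,\delta) \times (0,1)$): for such $v_T$ the integral $\int_0^\delta \int_0^1 v_T^2/k\, dxda$ vanishes, so Proposition \ref{obser.} reduces to the sharp direct estimate
\[
\int_0^A \int_0^1 \frac{v^2(T-\bar a, a, x)}{k}\, dxda \;\le\; C \int_0^T \int_0^A \int_\omega \frac{v^2}{k}\, dxdadt,
\]
which by duality corresponds precisely to null controllability of $y(T,\cdot,\cdot)$ on $(\delta,A)\times(0,1)$.

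Next, for each $\epsilon>0$, I would introduce the penalized functional
\[
J_\epsilon(v_T) := \frac{1}{2} \int_{\tilde T}^T \!\int_0^A \!\int_\omega \frac{v^2}{k}\, dxdadt + \frac{\epsilon}{2} \|v_T\|_{L^2_{\frac{1}{k}}((\delta,A)\times(0,1))}^2 + \int_0^A \!\int_0^1 \frac{y_0(a,x)\, v(\tilde T, a, x)}{k}\, dxda,
\]
defined on $L^2_{\frac{1}{k}}((\delta,A)\times(0,1))$, where $v$ is the solution of \eqref{h=0} on $\tilde Q$ with final datum $v_T$. Thanks to the specialized observability, $|(y_0, v(\tilde T))_{L^2_{\frac{1}{k}}}| \le C\|y_0\|\cdot \|\chi_\omega v\|_{L^2_{\frac{1}{k}}}$, so $J_\epsilon$ is continuous, strictly convex and coercive; it therefore admits a unique minimizer $v_T^\epsilon$ with associated adjoint trajectory $v^\epsilon$. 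The candidate control is $f_\epsilon := v^\epsilon \chi_\omega \in L^2_{\frac{1}{k}}(\tilde Q)$.

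The key algebraic ingredient is the duality identity obtained by multiplying the equation of \eqref{1new} by $\hat v/k$ (with $\hat v$ the adjoint trajectory associated to any test datum $\hat v_T$) and integrating by parts on $\tilde Q$. Using the Dirichlet conditions in $x$, the terminal condition $\hat v(t,A,x)=0$ and, crucially, the exact cancellation between the renewal condition $y(t,0,x)=\int_0^A \beta y\, da$ of \eqref{1new} and the nonlocal term $\beta(a,x)\hat v(t,0,x)$ in \eqref{h=0}, all internal terms vanish and one obtains
\[
\int_0^A\!\!\int_0^1 \frac{y_\epsilon(T)\, \hat v_T}{k}\, dxda - \int_0^A\!\!\int_0^1 \frac{y_0\, \hat v(\tilde T)}{k}\, dxda = \int_{\tilde T}^T\!\!\int_0^A\!\!\int_\omega \frac{f_\epsilon\, \hat v}{k}\, dxdadt.
\]
Combining this with the Euler--Lagrange identity for $J_\epsilon$ at $v_T^\epsilon$ yields $\int_\delta^A\int_0^1 y_\epsilon(T)\hat v_T/k\, dxda = -\epsilon(v_T^\epsilon, \hat v_T)_{L^2_{\frac{1}{k}}}$ for every admissible $\hat v_T$, whence $\|y_\epsilon(T)\|_{L^2_{\frac{1}{k}}((\delta,A)\times(0,1))} \le \epsilon \|v_T^\epsilon\|$. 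The minimization inequality $J_\epsilon(v_T^\epsilon) \le J_\epsilon(0)=0$ together with the specialized observability provides the $\epsilon$-uniform bounds $\|f_\epsilon\|_{L^2_{\frac{1}{k}}(\tilde Q)} \le C\|y_0\|$ and $\sqrt{\epsilon}\|v_T^\epsilon\| \le C\|y_0\|$, so that $\|y_\epsilon(T)\|_{L^2_{\frac{1}{k}}((\delta,A)\times(0,1))} \le C\sqrt{\epsilon}\|y_0\|\to 0$. A weak limit $f_\epsilon \rightharpoonup f$ and continuous dependence of $y$ on $f$ deliver the desired control with estimate \eqref{stimaf1}.

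The main obstacle will be the rigorous justification of the duality identity, because the weight $1/k$ is singular at the degenerate point(s), the renewal condition in age is nonlocal, and the natural functional framework $y\in\mathcal U$, $v\in\mathcal V_i$ has limited regularity. As in the proof of Proposition \ref{obser.}, I would carry out every integration by parts first on the dense subspace $\mathcal W = C^1([0,T]; D(\mathcal A))$ introduced before Proposition \ref{caccio}, and then pass to the general case by density, exploiting the continuity of the solution map for both \eqref{1new} and \eqref{h=0}.
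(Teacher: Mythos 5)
Your proposal is correct and rests on exactly the same two pillars as the paper's proof: the observability inequality of Proposition \ref{obser.} specialized to final data vanishing on $(0,\delta)\times(0,1)$ (so the extra $v_T$-term drops), and the duality identity obtained by multiplying the equation by the adjoint trajectory weighted by $1/k$, in which the renewal condition $y(t,0,x)=\int_0^A\beta y\,da$ cancels the nonlocal term $\beta(a,x)\hat v(t,0,x)$. The only genuine difference is the variational implementation. The paper minimizes the \emph{unpenalized} HUM functional $J(g)=\frac12\int_{\tilde T}^T\int_0^A\int_\omega \frac{v^2}{k}+\int_{Q_{A,1}}\frac{1}{k}v(\tilde T)y_0$ over the completion $\mathcal H$ of the space of admissible final data with respect to the observation seminorm, reads off the control $f=\hat v\chi_\omega$ directly from the minimizer, and gets $y(T,\cdot,\cdot)=0$ on $(\delta,A)\times(0,1)$ in one stroke from the Euler--Lagrange identity; the estimate \eqref{stimaf1} comes from taking $g=\hat g$ in that identity plus observability. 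You instead add the penalization $\frac{\epsilon}{2}\|v_T\|^2_{L^2_{1/k}}$, derive the uniform bounds $\|f_\epsilon\|\le C\|y_0\|$ and $\sqrt{\epsilon}\,\|v_T^\epsilon\|\le C\|y_0\|$, conclude $\|y_\epsilon(T)\|_{L^2_{1/k}((\delta,A)\times(0,1))}\le C\sqrt{\epsilon}\,\|y_0\|$, and pass to a weak limit. What your route buys is that you never have to work in the somewhat delicate completion space $\mathcal H$ (where one must accept that elements are only identified through the observation seminorm); what it costs is the extra limiting step $\epsilon\to 0$ and the need for weak continuity of $f\mapsto y(T)$, which the paper avoids. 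Your closing remark about justifying the integrations by parts on the dense class $\mathcal W$ and concluding by density matches the paper's own treatment (it is how Proposition \ref{obser.} itself is obtained from Theorem \ref{CorOb1'}), and, like the paper, you should invoke the observability with observation window $(\tilde T,T)$ (Remark \ref{remarkultimo} and the case $T=\bar a$) when working on $\tilde Q$.
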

\begin{proof}
Take $g \in L^2_{\frac{1}{k}}(Q_{A,1})$ such that $g(A,x)=0$ in $(0,1)$ and fix $\delta \in (T,A)$. Let $v$ be the solution of 
\begin{equation}\label{ad}
\begin{cases}
\ds \frac{\partial v}{\partial t} + \frac{\partial v}{\partial a}
+k(x)v_{xx}-\mu(t, a, x)v =-\beta(t,x,a)v(t,0,x),& (t,x,a) \in \tilde  Q,
\\[5pt]
v(t,a,0)=v(t,a,1) =0, &(t,a) \in \tilde Q_{T,A},\\
  v(T,a,x) = v_T(a,x):= \begin{cases}g(a,x), & (a,x) \in (\delta, A) \times (0,1),\\
  0, & (a,x) \in (0, \delta) \times (0,1),
  \end{cases} \\
  v(t,A,x)=0, & (t,x) \in \tilde Q_{T,1}.
\end{cases}\end{equation}
Now, fixed  $y_0 \in L^2_{\frac{1}{k}}(Q_{A,1})$, define as in \cite{fcm},
\[
J(g) = \frac{1}{2}\int_{\tilde T}^T \int_0^A\int_\omega \frac{v^2}{k}  dxda dt+ \int_0^A\int_0^1 \frac{1}{k}v(\tilde T,a, x) y_0(a,x) dxda.
\]
The functional $J$ is strictly convex, continuous and coercive over the Hilbert space $\mathcal H$ defined by the completion of $L^2((\delta, A) \times (0,1))$ with respect to the norm $\|v\|_{L^2(\tilde Q_{T,A}\times \omega)}$. Thus, there exists a unique minimum, $\hat g$, of $J$  and $\hat g(A,x)=0$ in $(0,1)$. Let $\hat v$ be the solution of \eqref{ad} associated to $\hat g$. Define 
$
f:=\hat v\chi_\omega
$
and let $y$ be the solution of \eqref{1new} in $\tilde Q$ associated to $f$. Since $\hat g$ is the minimum of $J$, it results
\begin{equation}\label{*2}
\begin{aligned}
0=\left[ \frac{d}{dt} J(\hat g + t g)\right]_{t=0} = \int_{\tilde T}^T\int_0^A \int_\omega \frac{1}{k} v \hat v  dxdadt  + \int_0^A\int_0^1 \frac{1}{k}v({\tilde T},a, x) y_0(a, x)  dxda,
\end{aligned}
\end{equation}
for all $g \in L^2(Q_{A,1})$  such that $g(A,x)=0$ in $(0,1)$. In particular, for $g = \hat g$, one has
\[
0 = \int_{\tilde T}^T \int_0^A\int_\omega \frac{\hat v^2}{k} dxda dt +  \int_0^A\int_0^1 \frac{1}{k} \hat v({\tilde T},a, x) y_0(a, x) dxda.
\]
Hence
\begin{equation}\label{a2}
\int_{\tilde T}^T \int_0^A\int_\omega  \frac{\hat v^2}{k} dxda dt =- \int_0^A\int_0^1\frac{1}{k} \hat v({\tilde T},a,x) y_0(a,x)  dxda,
\end{equation}
and, by H\"older's inequality, by Proposition \ref{obser.} applied to $\hat v$ in $\tilde Q$ and using the fact that $v_T(a,x) =0$ for all $(a,x) \in (0, \delta)\times (0,1)$, one has
\begin{equation}\label{a3}
\begin{aligned}
&\left| \int_0^A\int_0^1\frac{1}{k} \hat v({\tilde T},a, x) y_0(a, x)  dxda \right| \le \left(\int_0^A\int_0^1\frac{1}{k} \hat v^2({\tilde T},a, x)  dxda\right)^{\frac{1}{2}} \left(\int_0^A\int_0^1 \frac{1}{k}y_0^2(a,x) dxda\right)^{\frac{1}{2}}\\
& \le C\left(\int_{\tilde T}^T \int_0^A\int_ \omega \frac{\hat v^2}{k} dx dadt\right)^{\frac{1}{2}} \|y_0\|_{L^2_{\frac{1}{k}}(Q_{A,1})}.
\end{aligned}
\end{equation}
Thus, by \eqref{a2} and \eqref{a3},
\begin{equation}\label{a4}
\begin{aligned}
 \int_{\tilde T}^T\int_0^A\io \frac{1}{k}\hat v^2(t,a, x) dxda dt\le  C \left(\int_{\tilde T}^T \int_0^A\int_ \omega \frac{\hat v^2}{k} dx dadt\right)^{\frac{1}{2}}\|y_0\|_{L^2_{\frac{1}{k}}(Q_{A,1})}.
\end{aligned}
\end{equation}
Hence
\[
\|f\|_{L^2_{\frac{1}{k}}(\tilde Q)} = \left( \int_{\tilde T}^T\int_0^A\io \frac{\hat v^2}{k}dxda dt\right)^{\frac{1}{2}} \le C\|y_0\|_{L^2_{\frac{1}{k}}(Q_{A,1})}.
\]
Now, let $y$ be the solution of \eqref{1new} asssociated to $f$ and $y_0$.
\\
Multiplying the equation of \eqref{ad} by $\ds\frac{y}{k}$ and integrating over $\tilde Q$, one has:
\[
\begin{aligned}
0& = \int_{\tilde Q} \left(\frac{\partial v}{\partial t} + \frac{\partial v}{\partial a}
+k(x)v_{xx}-\mu(t, a, x)v +\beta(x,a)v(t,0,x)\right)  \frac{y}{k}dxda dt \Longleftrightarrow \\
0& = \int_\delta^A \int_0^1\frac{1}{k}y(T,a,x)g(a,x)\  dxda -\int_{Q_{A,1}}\frac{1}{k} y_0 v({\tilde T},a,x)  dxda - \int_{\tilde Q_{T,1}}\frac{1}{k} y(t,0,x) v(t,0,x)  dxdt\\
& +\int_{\tilde Q} \frac{1}{k}  \beta(a,x) v(t,0,x) y(t,a,x) dxdadt- \int_{\tilde Q}\frac{v}{k}\left(\frac{\partial y}{\partial t} + \frac{\partial y}{\partial a}
-k(x)y_{xx}+\mu(t, a, x)y\right)   dxda dt
\end{aligned}
\]
(recall that $y(t, 0, x)=\int_0^A \beta (a, x)y (t, a, x) da$).
But $\ds\frac{\partial y}{\partial t} + \frac{\partial y}{\partial a}
-k(x)y_{xx}+\mu(t, a, x)y= f\chi_\omega$; hence
\[
\begin{aligned}
0&=\int_\delta^A \int_0^1\frac{1}{k}y(T,a,x)g(a,x)\  dxda -\int_{Q_{A,1}}\frac{1}{k} y_0 v({\tilde T},a,x)  dxda- \int_{\tilde T}^T\int_0^A \int_\omega\frac{v\hat v}{k}  dxda dt.
\end{aligned}
\]
Thus, being by \eqref{*2}
\[
\int_{\tilde T}^T\int_0^A \int_\omega \frac{1}{k} v \hat v  dxdadt  =- \int_0^A\int_0^1 \frac{1}{k}v({\tilde T},a, x) y_0(a, x)  dxda,
\]
it follows
\[
0=\int_\delta^A \int_0^1\frac{1}{k}y(T,a,x)g(a,x)\  dxda
\]
for all $g \in L^2_{\frac{1}{k}}(Q_{A,1})$  with $g(A,x)=0$ in $(0,1)$. Hence  $y(T,a,x)=0$ a.e. $(a,x) \in (\delta, A) \times(0,1)$.

\end{proof}

Observe that if $T= \bar a$, Theorem  \ref{NCintermediate} is exactly the null controllability result that we expect. Indeed, in this case  \eqref{1new} coincide with \eqref{1}. On the other hand, if $T>\bar a$, the null controllability for \eqref{1} is given in the next theorem and it is based on the previous result:

\begin{Theorem}\label{ultimo} Assume Hypotheses $\ref{iposezione4}$ and $\ref{conditionbeta}$. Suppose that $\omega$ satisfies \eqref{omega}, \eqref{omega0} or \eqref{omega_new}. Then, given $T>0$ and $y_0 \in L^2_{\frac{1}{k}}(Q_{A,1})$,  for every $\delta \in (T,A)$,   there exists a control $f\in L^2_{\frac{1}{k}}(Q)$ such that the solution $y$ of  \eqref{1}
satisfies
\[
y(T,a,x) =0 \quad \text{a.e. } (a,x) \in (\delta, A) \times (0,1).
\]
Moreover, there exists $C>0$ such that
\begin{equation}\label{stimaf}
\|f\|_{L^2_{\frac{1}{k}}( Q)} \le C \|y_0\|_{L^2_{\frac{1}{k}}(Q_{A,1})}.
\end{equation}
\end{Theorem}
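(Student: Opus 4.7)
\medskip

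\noindent\textbf{Proof plan.} The strategy is to split the time interval $[0,T]$ at $\tilde T := T - \bar a$ and reduce the problem to Theorem \ref{NCintermediate}. If $T = \bar a$, then $\tilde T = 0$, $\tilde Q = Q$, and \eqref{1new} coincides with \eqref{1}; hence the conclusion is already contained in Theorem \ref{NCintermediate} and there is nothing further to prove. Thus I focus on the case $T > \bar a$, i.e.\ $\tilde T \in (0,T)$.

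\medskip

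\noindent On $[0,\tilde T]$ I choose $f \equiv 0$ and let the system evolve freely. By Hypotheses \ref{ratesAss} and \ref{iposezione4}, and by the well posedness Theorem \ref{theorem_nondivergence}, the Cauchy problem obtained from \eqref{1} by setting $f\equiv 0$ and posing the initial datum $y_0$ at $t=0$ admits a unique solution $\tilde y \in C([0,\tilde T]; L^2_{\frac{1}{k}}(Q_{A,1}))$. Multiplying the equation by $\tilde y/k$, integrating over $(0,\tilde T)\times Q_{A,1}$ and using the nonnegativity of $\mu$, the renewal condition together with the $L^\infty$ bound on $\beta$, and Young's inequality to absorb the boundary contribution at $a=0$, one obtains the energy estimate
\[
\|\tilde y(\tilde T,\cdot,\cdot)\|_{L^2_{\frac{1}{k}}(Q_{A,1})} \le C\,\|y_0\|_{L^2_{\frac{1}{k}}(Q_{A,1})},
\]
where $C$ depends only on $T$, $\|\mu\|_{L^\infty}$ and $\|\beta\|_{L^\infty}$.

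\medskip

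\noindent Next I apply Theorem \ref{NCintermediate} with initial datum $\tilde y(\tilde T,\cdot,\cdot) \in L^2_{\frac{1}{k}}(Q_{A,1})$ at time $\tilde T$. For any fixed $\delta \in (T,A)$ this yields a control $f_1 \in L^2_{\frac{1}{k}}(\tilde Q)$ and a solution $y_1 \in \mathcal U$ of \eqref{1new} (posed on $\tilde Q$ with initial time $\tilde T$) such that $y_1(T,a,x)=0$ a.e.\ on $(\delta,A)\times(0,1)$ and
\[
\|f_1\|_{L^2_{\frac{1}{k}}(\tilde Q)} \le C\,\|\tilde y(\tilde T,\cdot,\cdot)\|_{L^2_{\frac{1}{k}}(Q_{A,1})}.
\]
I then define the control $f$ on $Q$ by extending $f_1$ by zero on $(0,\tilde T)\times(0,A)\times(0,1)$ and set
\[
y(t,a,x) := \begin{cases} \tilde y(t,a,x), & t \in [0,\tilde T], \\ y_1(t,a,x), & t \in [\tilde T, T]. \end{cases}
\]
By construction $y$ satisfies the state equation, the Dirichlet conditions at $x=0,1$, and the nonlocal renewal condition at $a=0$ separately on each subinterval. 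The matching of the initial datum of $y_1$ with $\tilde y(\tilde T,\cdot,\cdot)$ gives continuity at $t=\tilde T$, so by uniqueness in Theorem \ref{theorem_nondivergence}, $y\in\mathcal U$ is the unique solution of \eqref{1} associated with the control $f$. In particular
\[
y(T,a,x) = y_1(T,a,x) = 0 \quad \text{a.e. } (a,x) \in (\delta,A)\times(0,1),
\]
and combining the two estimates above gives \eqref{stimaf}.

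\medskip

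\noindent The main technical point is the energy estimate on $[0,\tilde T]$: one has to control the boundary term at $a=0$ arising from $\int_0^A \partial_a(y^2)/(2k)\,da$, which involves $y(t,0,\cdot) = \int_0^A \beta(a,\cdot) y(t,a,\cdot)\,da$. Using $\beta\in L^\infty$ and the renewal identity, this term is dominated by $\|\beta\|_{L^\infty}^2 \int_0^A y^2/k\,da$ (plus an absorbable part), after which Gronwall's inequality yields the bound. The rest of the argument is just concatenation of solutions, which is justified by the uniqueness clause of Theorem \ref{theorem_nondivergence}.
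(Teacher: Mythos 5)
Your proposal is correct and follows the same two--step scheme as the paper: split the time interval at $\tilde T=T-\bar a$, let the system evolve with $f\equiv 0$ on $[0,\tilde T]$, bound the state at $t=\tilde T$ in $L^2_{\frac{1}{k}}(Q_{A,1})$ by $\|y_0\|_{L^2_{\frac{1}{k}}(Q_{A,1})}$ via an energy estimate, then invoke Theorem \ref{NCintermediate} on $[\tilde T,T]$ with initial datum $y(\tilde T,\cdot,\cdot)$, concatenate the two trajectories and combine the estimates to get \eqref{stimaf}; your remark that the case $T=\bar a$ is exactly Theorem \ref{NCintermediate} is also the paper's observation. The one genuine difference lies in the first phase: the paper does not evolve \eqref{1} itself but the auxiliary problem \eqref{1new2}, in which the renewal condition is replaced by $u(t,0,x)=0$, so that multiplying by $u/k$ gives $\frac{d}{dt}\int_0^A\int_0^1 \frac{u^2}{k}\,dx\,da\le 0$ and the bound $\|u(\tilde T)\|_{L^2_{\frac{1}{k}}}\le\|y_0\|_{L^2_{\frac{1}{k}}}$ with constant $1$, no Gronwall needed; you instead keep the nonlocal condition $y(t,0,x)=\int_0^A\beta y\,da$ during the free evolution and absorb the resulting boundary term at $a=0$ via Cauchy--Schwarz (which gives $\int_0^1 \frac{y^2(t,0,x)}{k}dx\le A\|\beta\|_\infty^2\int_{Q_{A,1}}\frac{y^2}{k}\,dx\,da$ directly, so in fact no absorption is even needed) and Gronwall, at the price of a constant of the form $e^{CA\|\beta\|_\infty^2 T}$. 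What your variant buys is that the concatenated function satisfies \emph{all} the conditions of \eqref{1} on the whole of $(0,T)$, including the renewal condition on $(0,\tilde T)$, so the appeal to uniqueness in Theorem \ref{theorem_nondivergence} is immediate; with the paper's choice the glued function has $y(t,0,x)=0$ rather than $\int_0^A\beta y\,da$ on the first interval, which matches \eqref{1} only up to an extra argument, so your slightly longer energy estimate is a reasonable trade--off. Both routes yield the same null controllability conclusion and the same form of the control estimate.
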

\begin{proof} Set $\tilde T := T-\bar a \in (0,T)$.
By Theorem \ref{theorem_nondivergence}, there exists a unique solution $u$  of
\begin{equation} \label{1new2}
\begin{cases}
\ds \frac{\partial u}{\partial t}  +\frac{\partial u}{\partial a}
-k(x)u_{xx}+\mu(t, a, x)u =0  &\quad \text{in } (0, \tilde T) \times(0,A)\times (0,1),\\
  u(t, a, 1)=u(t, a, 0)=0  &\quad \text{on }  (0, \tilde T) \times(0,A),\\
 u(0 , a, x)=y_0(a, x) &\quad \text{in } (0,A)\times (0,1),\\
 u(t, 0, x)=\int_0^A \beta (a, x)u (t, a, x) da &\quad  \text{in }(0, \tilde T) \times(0,1).
\end{cases}
\end{equation}
Set $\tilde y_0(a,x) :=u(\tilde T, a, x)$; clearly $\tilde y_0 \in L^2_{\frac{1}{k}}(Q_{A,1})$.
Now, consider 
\begin{equation} \label{1new1}
\begin{cases}
\ds \frac{\partial w}{\partial t}  +\frac{\partial w}{\partial a}
-k(x)w_{xx}+\mu(t, a, x)w =h(t,x,a)\chi_{\omega} & \quad \text{in } \tilde Q,\\
  w(t, a, 1)=w(t, a, 0)=0&  \quad \text{on } \tilde Q_{T,A},\\
 w(\tilde T , a, x)=\tilde y_0(a, x) &\quad \text{in }Q_{A,1},\\
 w(t, 0, x)=\int_0^A \beta (a, x)w (t, a, x) da & \quad  \text{in } \tilde Q_{T,1}.
\end{cases}
\end{equation}
Again, by Theorem \ref{theorem_nondivergence}, there exists a unique solution $w $  of \eqref{1new1} and, by the previous Theorem, there exists a control $h \in L^2_{\frac{1}{k}}(\tilde Q)$ such that
\[
w(T,a,x)=0 \text{ a.e. } (a,x) \in (\delta, A) \times(0,1).
\]
Now, define $y$ and $f$ by
\[
y:= \begin{cases} u, & \text{in}\; [0, \tilde T],\\
w, & \text{in} \; [\tilde T, T] \end{cases} \quad \text{and}\quad f:=\begin{cases}0, & \text{in}\; [0, \tilde T], \\ h, & \text{in}\; [\tilde T, T]. \end{cases}
\]
Then $y$ satisfies \eqref{1} and $f \in L^2_{\frac{1}{k}}(Q)$ is such that
\[
y(T,a,x)=0 \text{ a.e. } (a,x) \in (\delta, A) \times(0,1).
\]
Indeed $y(T,a,x) = w(T,a,x) =0$ a.e. $(a,x) \in (\delta, A) \times (0,1)$.\\
It remains to prove \eqref{stimaf}. To this aim, observe that, by \eqref{stimaf1},
\begin{equation}\label{ultima1}
\|f\|_{L^2_{\frac{1}{k}}(Q)}^2 = \int_{\tilde T}^T\int_0^A \int_0^1 \frac{h^2}{k} dxdadt  \le C \|\tilde y_0\|_{L^2_{\frac{1}{k}}(Q_{A,1})}^2 =C \int_0^A \int_0^1 \frac{u^2}{k}(\tilde T, a,x)dxda
\end{equation}
for a  strictly positive constant $C$. Thus, it is sufficient to estimate the last integral. To do this,  we multiply  the equation of \eqref{1new2} by $\ds \frac{u}{k}$  and we integrate over $Q_{A,1}$, obtaining:
\[
\frac{1}{2}\frac{d}{dt} \int_0^A \int_0^1 \frac{u^2}{k} dxda + \frac{1}{2}\int_0^1 \frac{u^2(t,A,x)}{k}dx + \int_0^A\int_0^1 u_x^2 dxda + \int_0^A \int_0^1 \mu \frac{u^2}{k} dxda=  \frac{1}{2}\int_0^1 \frac{u^2(t,0,x)}{k}dx.
\]
Hence, using the fact that $u(t,0,x)= \int_0^A \beta(a,x) u(t,a,x)da$,
\[
\frac{1}{2}\frac{d}{dt} \int_0^A \int_0^1 \frac{u^2}{k} dxda \le \frac{1}{2}\int_0^1 \frac{1}{k}\left(\int_0^A \beta(a,x) u(t,a,x)da\right)^2dx\le  \frac{C}{2}\int_0^A\int_0^1 \frac{u^2}{k}dxda.
\]
Setting $F(t):= \|u(t)\|^2_{L^2_{\frac{1}{k}}(Q_{A,1})}$ and multiplying the previous inequality by $e^{-Ct}$, it results
\[
\frac{d}{dt} \left(e^{-Ct}F(t)\right) \le 0.
\]
For all $t \in (0,T),$ integrating over $(0,t)$, we have
\[
\int_0^A \int_0^1 \frac{u^2(t,a,x)}{k} dxda \le C \int_0^A \int_0^1 \frac{u^2(0,a,x)}{k} dxda=C \int_0^A \int_0^1 \frac{y_0^2(a,x)}{k} dxda.
\]
In particular,
\begin{equation}\label{ultima2}
\int_0^A \int_0^1 \frac{u^2(\tilde T,a,x)}{k} dxda\le C\int_0^A \int_0^1 \frac{y_0^2(a,x)}{k} dxda.
\end{equation}
By \eqref{ultima1} and \eqref{ultima2}, \eqref{stimaf} follows.
\end{proof}

Actually, in the (ISD) case, this result can be deduced directly by Theorem \ref{ultimo} in the (BD) case. Indeed, it holds also if we substitute the space interval $(0,1)$ with a general interval $(\mathcal A, \mathcal B)$ provided that $k$ satisfies the required assumptions in this interval.
Now,  if we are in the (ISD) case, by \cite[Proposition 3.6]{fggr}, $y(t,a,x_0)=0$ a.e. $(t,a) \in Q_{T,A}$; hence, we can divide  \eqref{1} into two problems stated in $Q_{T,A} \times (0,x_0)$ and in $Q_{T,A}\times(x_0,1)$, respectively, and we can apply Theorem \ref{ultimo} in the (BD) case, obtaining the thesis.
This technique does not work in the weakly degenerate case since we are not able to divide the problem into two disjoint systems due the lack of the
characterization of $H^1_{\frac{1}{k}}(0,1)$. However, using observability inequalities and Carleman estimates, we are able to prove a null controllability result also in this case.

\section{Appendix}
\paragraph{Proof of Theorem \ref{nondegenerediv}}
Let us proceed with the proof of Theorem \ref{nondegenerediv}. It is similar to the one of Theorem \ref{Cor1} (see also \cite[Theorem 3.1]{fm1}), so we sketch it.

As a first step assume that $\mu \equiv 0$ and define, for
$s > 0$, the function
\[
w(t,a,x) := e^{s \Phi (t,a,x)}v(t,a,x)
\]
where $v$ is the solution of \eqref{adjoint} in $\mathcal{V}$; thus,
since $\Phi <0$, $w\in\mathcal{V}$. Of course, $w$ satisfies 
\begin{equation}\label{ND1'}
\begin{cases}
(e^{-s\Phi}w)_t + (e^{-s\Phi}w)_a +(k(x)(e^{-s\Phi}w)_{x})_x  =f(t,x,a), & (t,x) \in
Q,
\\[5pt]
w(0, a, x)= w(T,a, x)= 0, & (a,x) \in Q_{A,1},
\\[5pt]
w(t,A,x)=w(t,0,x)=0, & (t,x) \in  Q_{T,1},
\\[5pt]
w(t, a,0)= w(t, a, 1)= 0, & (t,a) \in  Q_{T,A}.
\end{cases}
\end{equation}
Defining
$
Pw:= w_t + w_a + (kw_x)_x$ and
$P_sw= e^{s\Phi}P(e^{-s\Phi}w),
$
the equation of \eqref{ND1'} becomes
\[
P_sw= P^+_sw + P^-_sw= e^{s\Phi}f,\\
\]
where
\[
P^+_sw := (kw_x)_x
 - s (\Phi_t + \Phi_a) w + s^2k (\Phi_x)^2 w,
\]
and
\[
P^-_sw := w_t+ w_a -2sk\Phi_x w_x -
 s(k\Phi_x)_xw.
\]
Moreover, setting $\langle u, v \rangle_2:= \int_Q uv dxdadt$ and $\| u\|_2:= \int_Q u^2 dxdadt$, one has
\begin{equation}\label{stimettand}
\begin{aligned}
2\langle P^+_sw, P^-_sw\rangle_2 &\le 2\langle P^+_sw, P^-_sw\rangle_2+
\|P^+_sw \|_2^2 + \|P^-_sw\|_2^2\\
& =\| P_sw\|_2^2= \|fe^{s\Phi}\|_2^2.
\end{aligned}
\end{equation}
Proceeding as in the proof of Lemma \ref{lemma1}, one can compute the scalar product $\langle P^+_sw, P^-_sw\rangle_2$, which takes, in this case, the following form
\begin{Lemma}\label{lemma1nd}
The following identity holds:
\begin{equation}\label{D&BTnd}
\left.
\begin{aligned}
&\langle P^+_sw,P^-_sw\rangle_2 \\
&= \frac{s}{2} \int_Q (\Phi_{tt} + \Phi_{aa})w^2dxdadt+ s^3
\int_Q\big(2k \Phi_{xx} + k'\Phi_x\big)k(\Phi_x)^2w^2dxdadt\\
&+s
\int_Q(2k \Phi_{xx} + k'\Phi_x)k(w_x)^2 dxdadt+ s\int_Q k(k\Phi_x)_{xx} w w_xdxdadt\\
& -2s^2 \int_Qk \Phi_x \Phi_{tx}w^2dxdadt   -2s^2 \int_Qk \Phi_x \Phi_{ax}w^2dxdadt   +s \int_Q \Phi_{ta}w^2dxdadt 
\end{aligned}\right\}\;\text{\{D.T.\}$_2$}
\end{equation}
\begin{equation}\nonumber
 \text{\{B.T.\}$_2$}\;\left\{
\begin{aligned}
& -\frac{1}{2}\int_0^A\int_0^{1}\Big[kw_x^2\Big]_0^Tdxda
+\int_0^T\int_0^A\Big[kw_x(w_t+ w_a)\Big]_0^{1}dadt
\\[3pt]&
- s \int_0^T\int_0^A\Big[ k ^2\Phi_xw_x^2\Big]_0^{1}dadt\\[3pt]&
- s \int_0^T\int_0^A\!\!\Big[ k(k\Phi_{x})_xww_x\Big]_0^{1}dadt
+\frac{1}{2}\int_0^A \int_{0}^{1}\!\!\Big[ (s^2k \Phi_x^2 - s \Phi_t-s\Phi_a )w^2\Big]_0^Tdxda
\\[3pt]
&
- s \int_0^T\int_0^A\Big[  (s^2 k^2\Phi_x^3- sk\Phi_x\Phi_t- sk\Phi_x\Phi_a)w^2\Big]_0^{1}dadt\\
&
 -\frac{1}{2}\int_0^T\int_0^{1} \big[kw_x^2\big]_0^Adxdt
 +\frac{1}{2}\int_0^T\int_0^{1}\big[ \big(s^2 k\Phi_x^2 - s (\Phi_t+\Phi_a) \big)w^2\big]_0^Adxdt.
\end{aligned}\right.
\end{equation}
\end{Lemma}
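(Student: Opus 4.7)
The plan is to follow the same strategy as in Lemma \ref{lemma1}: expand the scalar product into the contributions of the four summands of $P^-_s w$ and perform repeated integrations by parts in $x$, $t$ and $a$ to move all derivatives onto the weight functions $k$ and $\Phi$. The only structural difference from the nondivergence case is that the spatial operator is $(kw_x)_x$ rather than $kw_{xx}$, so each spatial integration by parts produces an extra $k'$ term that must be correctly absorbed. Concretely, I would write
$$\langle P^+_s w,P^-_s w\rangle_2 = J_1+J_2+J_3+J_4,$$
where $J_i$ is obtained by pairing $P^+_s w$ with the $i$th summand of $P^-_s w$ (i.e.\ with $w_t$, $w_a$, $-2sk\Phi_x w_x$ and $-s(k\Phi_x)_x w$, respectively).

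For $J_1$ I would integrate $(kw_x)_x\,w_t$ by parts in $x$ and then in $t$ (yielding the temporal boundary $-\tfrac12\int_0^A\int_0^1[kw_x^2]_0^T\,dx\,da$), rewrite $(\Phi_t+\Phi_a)w w_t=\tfrac12(\Phi_t+\Phi_a)(w^2)_t$ and integrate by parts in $t$ (yielding the distributed term $\tfrac{s}{2}\int_Q(\Phi_{tt}+\Phi_{ta})w^2$), and similarly treat $k\Phi_x^2 w w_t=\tfrac12 k\Phi_x^2(w^2)_t$ to get $-s^2\int_Q k\Phi_x\Phi_{xt}w^2$. Integral $J_2$ is handled by an identical sequence of manipulations with $a$ in place of $t$; summing $J_1$ and $J_2$ produces the $\tfrac{s}{2}\int_Q(\Phi_{tt}+\Phi_{aa})w^2$ and $s\int_Q\Phi_{ta}w^2$ terms of $\{D.T.\}_2$ (the coefficient $s$ on the mixed derivative arising as $\tfrac{s}{2}+\tfrac{s}{2}$), together with the $-2s^2\int_Q k\Phi_x(\Phi_{xt}+\Phi_{xa})w^2$ contributions and all of the $T$-, $A$-boundaries in $\{B.T.\}_2$.

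The interesting cancellations occur in $J_3$ and $J_4$. In $J_3$ the piece $-2s\int(kw_x)_x\,k\Phi_x w_x$ becomes, after two integrations by parts in $x$ and writing $2k^2\Phi_x w_x w_{xx}=k^2\Phi_x(w_x^2)_x$, $s\int k^2\Phi_{xx}w_x^2$ plus the $x=0,1$ boundary; the first piece of $J_4$ is a double IBP yielding $-s\int(kw_x)_x(k\Phi_x)_x w=s\int k(k\Phi_x)_{xx}w w_x+s\int k(k\Phi_x)_x w_x^2$ up to a boundary. Summing these and using $(k\Phi_x)_x=k'\Phi_x+k\Phi_{xx}$ gives exactly $s\int k(2k\Phi_{xx}+k'\Phi_x)w_x^2$, while the leftover $s\int k(k\Phi_x)_{xx}w w_x$ is the mixed distributed term that appears literally in $\{D.T.\}_2$. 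An analogous combination of the $w w_x$ pieces of $J_3$ (namely $-2s^3\int k^2\Phi_x^3 w w_x$, which after IBP contributes $s^3\int(2kk'\Phi_x^3+3k^2\Phi_x^2\Phi_{xx})w^2$) with the $w^2$ piece of $J_4$ (namely $-s^3\int k\Phi_x^2(k\Phi_x)_x w^2=-s^3\int(kk'\Phi_x^3+k^2\Phi_x^2\Phi_{xx})w^2$) yields the symmetric coefficient $s^3\int k\Phi_x^2(2k\Phi_{xx}+k'\Phi_x)w^2$.

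\textbf{Main obstacle.} The computation is entirely mechanical but extremely bookkeeping-heavy: I would have to keep exact track of the twelve interior integrals produced by the decomposition and of more than a dozen boundary contributions at $x=0,1$, $t=0,T$ and $a=0,A$, and I must recognise the identity $k\Phi_{xx}+(k\Phi_x)_x=2k\Phi_{xx}+k'\Phi_x$ in order to match the two $s$- and $s^3$-coefficients of $\{D.T.\}_2$. The remaining $J_3$- and $J_4$-boundary terms (the $-s[k^2\Phi_x w_x^2]$, $-s[k(k\Phi_x)_x w w_x]$ and $-s[(s^2 k^2\Phi_x^3-sk\Phi_x(\Phi_t+\Phi_a))w^2]$ at $x=0,1$) then combine with the temporal and age boundaries produced in Steps 2--3 to give the block $\{B.T.\}_2$, completing the identity.
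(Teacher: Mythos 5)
Your strategy (expand the inner product and integrate by parts until all derivatives land on $k$ and $\Phi$) is sound and, carried out carefully, does yield the identity; but your organization differs from the paper's. You decompose by the four summands of $P^-_sw$ and redo the whole computation from scratch, whereas the paper groups the product as $I_1+I_2+I_3+I_4$ with $I_1=\int_Q(kw_x)_x\,(w_t-2sk\Phi_xw_x-s(k\Phi_x)_xw)$ and $I_2=\int_Q(-s\Phi_tw+s^2k\Phi_x^2w)(w_t-2sk\Phi_xw_x-s(k\Phi_x)_xw)$, i.e.\ exactly the age-independent block, for which it simply invokes the known identity of \cite[Lemma 3.1]{fm1}; only the genuinely new terms $I_3=\int_Q(-s\Phi_aw)(w_t-2sk\Phi_xw_x-s(k\Phi_x)_xw)$ and $I_4=\int_Qw_a\,P^+_sw$ are computed by hand. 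The paper's grouping buys a much shorter proof and fewer chances for sign errors; your route is self-contained and makes the cancellations (such as $2k(k\Phi_x)_x-(k^2\Phi_x)_x=k^2\Phi_{xx}$ and the matching $s$- and $s^3$-coefficients $k(2k\Phi_{xx}+k'\Phi_x)$) explicit, which you verify correctly.

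One bookkeeping slip to fix: the terms $-2s^2\int_Qk\Phi_x\Phi_{tx}w^2$ and $-2s^2\int_Qk\Phi_x\Phi_{ax}w^2$ do \emph{not} come entirely from $J_1+J_2$. Writing $s^2k\Phi_x^2ww_t=\tfrac{s^2}{2}k\Phi_x^2(w^2)_t$ and integrating in $t$ (and likewise in $a$) produces only $-s^2\int_Qk\Phi_x\Phi_{tx}w^2-s^2\int_Qk\Phi_x\Phi_{ax}w^2$. The missing half arises from the pairings you do not discuss, namely $-s(\Phi_t+\Phi_a)w$ against $-2sk\Phi_xw_x$ in $J_3$ and against $-s(k\Phi_x)_xw$ in $J_4$: after writing $2s^2\int_Qk\Phi_x(\Phi_t+\Phi_a)ww_x=s^2\int_Qk\Phi_x(\Phi_t+\Phi_a)(w^2)_x$ and integrating in $x$, the $(k\Phi_x)_x(\Phi_t+\Phi_a)$ contributions cancel against the $J_4$ piece, leaving precisely $-s^2\int_Qk\Phi_x(\Phi_{tx}+\Phi_{ax})w^2$ plus the boundary $s^2\int_0^T\int_0^A\big[k\Phi_x(\Phi_t+\Phi_a)w^2\big]_0^1\,dadt$, which is the $-s\big[(-sk\Phi_x\Phi_t-sk\Phi_x\Phi_a)w^2\big]_0^1$ part of $\{B.T.\}_2$. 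Since you do list that boundary block at the end, this is a misattribution rather than a fatal gap, but as written your accounting of $\{D.T.\}_2$ would come out short by $s^2\int_Qk\Phi_x(\Phi_{tx}+\Phi_{ax})w^2$ unless these cross terms are included.
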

\begin{proof}
Integrating by parts, one has
\[
\langle P^+_sw,P^-_sw\rangle_2= I_1+I_2+ I_3+ I_4,
\]
where
\[
I_1 = \int_Q (kw_x)_x ( w_t -2sk\Phi_x w_x -
 s(k\Phi_x)_xw)dxdadt,
\]
\[
I_2 = \int_Q( - s \Phi_t w + s^2k (\Phi_x)^2 w) ( w_t -2sk\Phi_x w_x -
 s(k\Phi_x)_xw)dxdadt,
\]
\[
I_3= \int_Q  - s \Phi_a w ( w_t -2sk\Phi_x w_x -
 s(k\Phi_x)_xw)dxdadt
\]
and
\[
I_4= \int_Q w_a((kw_x)_x
 - s (\Phi_t + \Phi_a) w + s^2k (\Phi_x)^2 w)dxdadt.
\]
By, \cite[Lemma 3.1]{fm1},
\begin{equation}\label{D&BTndfm}
\left.
\begin{aligned}
&I_1+I_2 \\
&= \frac{s}{2} \int_Q \Phi_{tt} w^2dxdadt+ s^3
\int_Q\big(2k \Phi_{xx} +k'\Phi_x\big)k(\Phi_x)^2w^2dxdadt\\
&- 2s^2 \int_Qk \Phi_x \Phi_{tx}w^2dxdadt  +s
\int_Q(2k \Phi_{xx} + k'\Phi_x)k(w_x)^2 dxdadt\\
&+ s\int_Q k(k\Phi_x)_{xx} w w_xdxdadt
\end{aligned}\right\}\;\text{\{D.T.\}$_3$}
\end{equation}
\begin{equation}\label{D&BTndfm1}
 \text{\{B.T.\}$_3$}\;\left\{
\begin{aligned}
& + \int_0^T\int_0^A[kw_xw_t]_{x=0}^{x=1}da dt- \frac{s}{2}
\int_0^A\int_0^1[w^2\Phi_t]_{t=0}^{t=T}dxda+ \frac{s^2}{2}\int_0^A\int_0^1
[k(\Phi_x)^2 w^2]_{t=0}^{t=T}dxda\\
&-\frac{1}{2}\int_0^A \int_0^1 [k(w_x)^2]_{t=0}^{t=T}dxda
+\int_0^T\int_0^A[-sk(k\Phi_x)_xw w_x]_{x=0}^{x=1}dadt\\
&+ \int_0^T\int_0^A[-s\Phi_x k^2(w_x)^2 +s^2k\Phi_t \Phi_x w^2 - s^3
k^2(\Phi_x)^3w^2 ]_{x=0}^{x=1}dadt.
\end{aligned}\right.
\end{equation}
Next, we compute $I_3$ and $I_4$:
\begin{equation}\label{I3new}
\begin{aligned}
I_3&=\frac{s}{2}\int_Q\Phi_{at}w^2dxdadt -\frac{s}{2} \int_0^A\int_0^1 [\Phi_aw^2]_{t=0}^{t=T}dxda\\
&  -s^2 \int_Q (k\Phi_x)_x \Phi_a w^2dxdadt -s^2 \int_Q k \Phi_x\Phi_{ax}w^2  dxdadt+ s^2\int_0^T\int_0^A[k\Phi_a\Phi_xw^2 ]_{x=0}^{x=1}dadt\\
&+ s^2\int_Q(k\Phi_x)_x \Phi_aw^2  dxdadt\\
&=\frac{s}{2}\int_Q\Phi_{at}w^2dxdadt -\frac{s}{2} \int_0^A\int_0^1 [\Phi_aw^2]_{t=0}^{t=T}dxda\\
& -s^2 \int_Q k \Phi_x\Phi_{ax}w^2  dxdadt+ s^2\int_0^T\int_0^A[k\Phi_a\Phi_xw^2 ]_{x=0}^{x=1}dadt.
\end{aligned}
\end{equation}
On the other hand
\begin{equation}\label{I4new}
\begin{aligned}
I_4&= \int_0^T\int_0^A[kw_xw_a]_{x=0}^{x=1}dadt  -\frac{1}{2}\int_0^T\int_0^1[k(w_x)^2]_{a=0}^{a=A}dxdt\\
&+\frac{1}{2}\int_0^T\int_0^{1}\big[ \big(s^2 k\Phi_x^2 - s (\Phi_t+\Phi_a) \big)w^2\big]_{a=0}^{a=A}dxdt
\\[3pt]
&
+
\frac{s}{2} \int_{Q} \Phi_{aa}w^2dxdadt+
\frac{s}{2} \int_{Q}\Phi_{ta}w^2dxdadt
- s^2\int_{Q} k\Phi_x\Phi_{xa}w^2dxdadt.
\end{aligned}
\end{equation}
Adding \eqref{D&BTndfm}, \eqref{I3new} and \eqref{I4new}, we have the thesis.
\end{proof}

The crucial step is to prove
now the following estimate.

\begin{Lemma}\label{lemma2?}
There
exist two  strictly positive constants $s_0$  and $C$ such that for all $s \ge
s_{0}$ the distributed terms of \eqref{D&BTnd} satisfy the estimate
\[
Cs\int_Q \Theta e^{\kappa \sigma}(w_x)^2 dxdadt + Cs^3
\int_Q\Theta^3 e^{3\kappa \sigma}w^2 dxdadt\le \text{\{D.T.\}$_2$}.
\]
\end{Lemma}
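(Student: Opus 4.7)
The plan is to explicitly evaluate the derivatives of $\Phi$ using \eqref{571} and then split the distributed terms \{D.T.\}$_2$ into a few ``principal'' positive contributions (the ones carrying the highest powers of $s$ with the correct weights) versus lower-order ``error'' contributions that can be absorbed for $s$ large and $\kappa$ large. Since $k \in C^1([0,1])$ with $k > 0$, both $k$ and $1/k$ are bounded, which is what makes the non-degenerate case substantially easier than the degenerate estimate of Lemma \ref{lemma2}.

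First I would compute
\[
\Phi_x = -\Theta \kappa \fd\, \frac{e^{\kappa\sigma}}{k}, \qquad
k\Phi_x = -\Theta \kappa \fd\, e^{\kappa\sigma},
\]
and therefore, after differentiating once more,
\[
2k\Phi_{xx} + k'\Phi_x \;=\; \Theta\, e^{\kappa\sigma}\,\kappa\fd\, \frac{k' + 2\kappa \fd}{k}.
\]
Because $\fd = \|k'\|_{L^\infty(0,1)}$, the choice $\kappa \geq 1$ gives $k'+2\kappa\fd \geq (2\kappa-1)\fd > 0$, so
\[
(2k\Phi_{xx}+k'\Phi_x)\,k \;\geq\; \kappa \fd^{\,2}(2\kappa-1)\,\Theta\, e^{\kappa\sigma}.
\]
Inserting this into the second and third terms of \{D.T.\}$_2$ (using also $k\Phi_x^2 = \Theta^2 \kappa^2\fd^{\,2} e^{2\kappa\sigma}/k$ and the lower bound $1/k \geq c_0>0$), I obtain the two principal terms
\[
s\int_Q (2k\Phi_{xx}+k'\Phi_x)\,k\,(w_x)^2\,dxdadt \;\geq\; C_1 s \int_Q \Theta e^{\kappa\sigma}(w_x)^2\,dxdadt,
\]
\[
s^3\!\int_Q (2k\Phi_{xx}+k'\Phi_x)\,k\Phi_x^2 w^2\,dxdadt \;\geq\; C_2 s^3 \!\int_Q \Theta^3 e^{3\kappa\sigma} w^2\,dxdadt,
\]
with $C_1, C_2$ depending on $\kappa$, $\fd$, $c_0$.

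Next I absorb the remaining five ``error'' terms of \{D.T.\}$_2$. The key inputs are the pointwise inequalities already collected in \eqref{magtheta}, namely $|\Theta \Theta_t|, |\Theta \Theta_a| \leq c\Theta^3$ and $|\Theta_{tt}|, |\Theta_{aa}|, |\Theta_{ta}| \leq c\Theta^{3/2}$, together with the observations that $\Theta$ is bounded below by a strictly positive constant on $Q$ and $e^{\kappa\sigma}\geq 1$, so $\Theta^{3/2} \leq c\,\Theta^3 e^{3\kappa\sigma}$. Using these, together with $|\Psi| \leq c$, I estimate:
\begin{itemize}
\item $\left|\dfrac{s}{2}\int_Q(\Phi_{tt}+\Phi_{aa})w^2\,dxdadt\right| + \left|s\int_Q \Phi_{ta} w^2 \,dxdadt\right| \leq c s\int_Q \Theta^{3/2} w^2\,dxdadt \leq c s\int_Q \Theta^3 e^{3\kappa\sigma} w^2\,dxdadt$;
\item $\left|2s^2\int_Q k\Phi_x(\Phi_{tx}+\Phi_{ax})w^2\,dxdadt\right| \leq c s^2\int_Q (|\Theta\Theta_t|+|\Theta\Theta_a|)\dfrac{e^{2\kappa\sigma}}{k}w^2\,dxdadt \leq c s^2 \int_Q \Theta^3 e^{3\kappa\sigma} w^2\,dxdadt$.
\end{itemize}
For $s\geq s_0$ large, the factor $s$ or $s^2$ is dominated by $\varepsilon s^3$ with any prescribed $\varepsilon$, so these are absorbed by the principal $s^3$ term.

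The one genuinely cross term is $s\int_Q k(k\Phi_x)_{xx} w w_x\,dxdadt$. Here I compute
\[
(k\Phi_x)_{xx} = -\Theta \kappa^2 \fd^{\,2} e^{\kappa\sigma}\frac{\kappa \fd + k'}{k^2},
\]
so $|k (k\Phi_x)_{xx}| \leq C \Theta e^{\kappa\sigma}$. Then Young's inequality with a small parameter $\varepsilon>0$ yields
\[
\left|s\int_Q k(k\Phi_x)_{xx}\,ww_x\,dxdadt \right| \leq \varepsilon s\int_Q \Theta e^{\kappa\sigma}(w_x)^2\,dxdadt + \frac{C}{\varepsilon}\,s\int_Q \Theta e^{\kappa\sigma} w^2\,dxdadt,
\]
and the last integral is bounded by $C_\varepsilon s\int_Q \Theta^3 e^{3\kappa\sigma} w^2\,dxdadt$ (since $\Theta$ is bounded below and $e^{-2\kappa\sigma}\leq 1$), which for $s$ large is dominated by a small fraction of the principal $s^3$ term. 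The obstacle I expect to watch most carefully is precisely this balancing of the cross term against the principal terms, i.e.\ choosing $\varepsilon$ small enough to swallow the $(w_x)^2$ piece into $C_1 s\int_Q\Theta e^{\kappa\sigma}(w_x)^2\,dxdadt$ while keeping $s_0$ finite, as well as verifying that the constants $C_1, C_2$ remain uniform after increasing $\kappa$. Once these absorptions are performed, the inequality
\[
Cs\int_Q \Theta e^{\kappa\sigma}(w_x)^2\,dxdadt + Cs^3\int_Q \Theta^3 e^{3\kappa\sigma} w^2\,dxdadt \leq \{\mathrm{D.T.}\}_2
\]
follows for all $s \geq s_0$ with suitable $C, s_0 > 0$, which is the claim.
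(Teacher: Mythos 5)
Your proposal is correct, and all the explicit computations check out: with $\sigma'=-\fd/k$ one indeed gets $\Phi_x=-\Theta\kappa\fd e^{\kappa\sigma}/k$, $(2k\Phi_{xx}+k'\Phi_x)k=\Theta e^{\kappa\sigma}\kappa\fd\,(k'+2\kappa\fd)\ge (2\kappa-1)\kappa\fd^2\,\Theta e^{\kappa\sigma}$ for $\kappa\ge 1$, and the remaining terms are absorbed exactly as you indicate, using that $\Theta$ is bounded below on $Q$, $e^{\kappa\sigma}\ge 1$, $1/k$ and $k$ are bounded, and \eqref{magtheta}. The route differs from the paper's mainly in how much is done by hand: the paper splits $\{D.T.\}_2$ into the five ``parabolic'' terms $\{D.T.\}_3$ of \eqref{D&BTndfm}, which it bounds from below in one stroke by citing \cite[Lemma 3.2]{fm1} (that citation is precisely where the positivity of $2k\Phi_{xx}+k'\Phi_x$ and the Young-type absorption of the cross term $s\int_Q k(k\Phi_x)_{xx}ww_x\,dxdadt$ are hidden), and then estimates only the three extra age-dependent terms \eqref{02''} via \eqref{magtheta}, exactly as you do in your second bullet (compare \eqref{ziand}--\eqref{ziond}). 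Your version is self-contained: it re-derives the content of the cited lemma, makes transparent how the choice of $\kappa$ and the constant $\fd=\|k'\|_{L^\infty(0,1)}$ produce the principal positive terms (note that, as in the paper's own weight construction, this tacitly requires $\fd>0$, i.e. $k$ non-constant), and shows explicitly the $\varepsilon$--absorption of the mixed term into the gradient term. What the paper's argument buys is brevity and consistency of constants with \cite{fm1}; what yours buys is independence from the external reference at the price of repeating its computation.
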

\begin{proof}
By \cite[Lemma 3.2]{fm1}, there
exist two  strictly  positive constants $s_0$  and $C$ such that for all $s \ge
s_{0}$, the distributed terms given in \eqref{D&BTndfm}
\begin{equation}\label{02'}
\begin{aligned}
\frac{C}{2}s\int_Q \Theta e^{\kappa \sigma} (w_x)^2 dxdadt + Cs^3
\int_Q\Theta^3 e^{3\kappa \sigma}w^2 dxdadt\le \text{\{D.T.\}$_3$}.
\end{aligned}
\end{equation}
Moreover, using the definition of $\Phi$, 
the other distributed terms of
\:$<P^+_sw, P^-_sw>_2$ take the form
\begin{equation}\label{02''}
-2 s^2 \int_{Q}k\Theta{\Theta_a}\Psi_x^2 w^2dxdadt
+\frac{s}{2} \int_{Q}\Theta_{aa}\Psi w^2dxdadt
+ s\int_Q\Theta_{ta}\Psi w^2dxdadt.
\end{equation}
Now, the first term in \eqref{02''} can be estimated in the following way:
\begin{equation}\label{ziand}
\begin{aligned}
&\left|2 s^2\mathfrak{d}^2\kappa^2 \int_Q\Theta\Theta_a \frac{e^{2\kappa \sigma}}{k}w^2 dxdadt \right|\le
\frac{2 \mathfrak{d}^2\kappa^2 c}{\min_{[0,1]} k \min_{[0,1]} e^{\kappa \sigma}} s^2\int_Q\Theta^3
e^{3\kappa \sigma}w^2 dxdadt
\\
&\le \frac{C}{6}s^3\int_Q\Theta^3 e^{3\kappa \sigma}w^2 dxdadt, 
\end{aligned}
\end{equation}
for some $C>0$ and $s\geq \displaystyle \frac{12 \mathfrak{d}^2\kappa^2 c}{C\min_{[0,1]} k \min_{[0,1]} e^{2\kappa \sigma}}$. 
Using again \eqref{magtheta}, we have
\begin{equation}\label{ziond}
\begin{aligned}
\left|\frac{s}{2} \int_Q \Theta_{aa}\Psi w^2 dxdadt\right| &\leq s c \max_{[0,1]} |\Psi|\int_Q\Theta^3w^2 dxdadt\leq\frac{cs\max_{[0,1]} |\Psi|}{\min e^{3\kappa \sigma}}\int_Q\Theta^3e^{3\kappa \sigma}w^2dxdadt\\
& \leq \frac{C}{6}s^3\int_Q\Theta^3e^{3\kappa \sigma}w^2dxdadt
\end{aligned}
\end{equation}
and
\[
\left|s\int_Q {\Theta_{ta}}\Psi w^2dxdadt \right| \leq \frac{C}{6}s^3\int_Q\Theta^3e^{3\kappa \sigma}w^2dxdadt,
\]
for $s\geq \displaystyle \sqrt{\frac{6c\max_{[0.1]} |\Psi|}{C\min
e^{3\kappa \sigma}}}$ . 
In conclusion, by the previous inequalities, we obtain
\[
\begin{aligned}
\{D.T.\}_2& \ge  \frac{C}{2}s\int_Q \Theta e^{\kappa \sigma} (w_x)^2 dxdadt + Cs^3
\int_Q\Theta^3 e^{3\kappa \sigma}w^2 dxdadt - \frac{C}{2}s^3
\int_Q\Theta^3 e^{3\kappa \sigma}w^2 dxdadt.
\end{aligned}
\]
Hence, the thesis follows.
\end{proof}
The next lemma holds.
\begin{Lemma}\label{NDBT}
The boundary terms in \eqref{D&BTnd} become
\begin{equation}\label{NDBT1}
\begin{aligned}
\{B.T.\}_2=s\kappa\|k'\|_{L^\infty(0,1)}\int_0^T\int_0^A[k\Theta e^{\kappa\sigma}
(w_x)^2]^{x=1}_{x=0}dadt.
\end{aligned}
\end{equation}
\end{Lemma}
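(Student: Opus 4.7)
The plan is to show, working term by term through the list of boundary contributions recorded in Lemma \ref{lemma1nd}, that every piece of $\{B.T.\}_2$ either vanishes identically or collapses, after an explicit computation of $\Phi_x$, to the right-hand side of \eqref{NDBT1}.

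First I would kill all the terms carrying the markers $[\cdot]_0^T$ at $t=0,T$ and $[\cdot]_0^A$ at $a=0,A$. Since $\Psi(x)<0$ on $[0,1]$ and $\Theta(t,a)\to+\infty$ as $t\to 0^+$, $t\to T^-$ or $a\to 0^+$, the weight $e^{s\Phi}$ and all of its spatial derivatives vanish in those limits, so $w=e^{s\Phi}v$ and $w_x$ are zero there; at $a=A$, the final-time condition $v(t,A,x)=0$ from \eqref{adjoint} and an argument analogous to the one used in Lemma \ref{BT} (Hardy-type control of traces together with $w\in\mathcal{V}$) force both $w(t,A,\cdot)$ and $w_x(t,A,\cdot)$ to vanish. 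This eliminates every time-boundary and age-boundary integral.

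Next I would handle the spatial traces $[\cdot]_{x=0}^{x=1}$. The homogeneous Dirichlet conditions $w(t,a,0)=w(t,a,1)=0$ immediately kill any integrand proportional to $w$ or $w^2$, which takes care of the lines involving $[k(k\Phi_x)_x\,w\,w_x]_0^1$ and $[(s^2k^2\Phi_x^3-sk\Phi_x(\Phi_t+\Phi_a))w^2]_0^1$. Differentiating the identities $w(\cdot,\cdot,0)\equiv 0$ and $w(\cdot,\cdot,1)\equiv 0$ in $t$ and in $a$ gives $w_t=w_a=0$ at $x=0,1$, which makes the term $\int_0^T\!\int_0^A[kw_x(w_t+w_a)]_0^1\,dadt$ vanish as well. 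The only survivor is
\[
-s\int_0^T\!\int_0^A\bigl[k^2\Phi_x\,w_x^2\bigr]_{x=0}^{x=1}\,dadt.
\]

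Finally I would compute $\Phi_x$ explicitly: from $\sigma(x)=\mathfrak{d}\int_x^1 \frac{1}{k(t)}\,dt$ one has $\sigma'(x)=-\mathfrak{d}/k(x)$ and $\Psi'(x)=\kappa\sigma'(x)e^{\kappa\sigma(x)}=-\kappa\mathfrak{d}\,e^{\kappa\sigma(x)}/k(x)$, hence
\[
\Phi_x(t,a,x)=\Theta(t,a)\Psi'(x)=-\kappa\mathfrak{d}\,\Theta(t,a)\,\frac{e^{\kappa\sigma(x)}}{k(x)},
\]
so that $k^2\Phi_x=-\kappa\mathfrak{d}\,k\,\Theta\,e^{\kappa\sigma}$. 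Substituting and recalling $\mathfrak{d}=\|k'\|_{L^\infty(0,1)}$ produces \eqref{NDBT1}. The main technical nuisance I expect is not the algebra but the bookkeeping: one must verify that all the traces at $x=0,1$ and at $a=A$ are genuinely well defined and that one is entitled to pass $\partial_t$ and $\partial_a$ across the Dirichlet condition, which is again an application of the regularity $w\in\mathcal{V}$ together with the $H^2$-trace theorem, exactly as used for Lemma \ref{BT}.
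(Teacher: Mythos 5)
Your proof is correct and follows essentially the same route as the paper: all boundary contributions except $-s\int_0^T\int_0^A\bigl[k^2\Phi_x w_x^2\bigr]_{x=0}^{x=1}dadt$ are killed by the Dirichlet conditions on $w$, the vanishing of the weight $e^{s\Phi}$ as $t\to 0^+,T^-$, $a\to 0^+$, and the condition at $a=A$, and the survivor is rewritten using $\Phi_x=-\kappa\mathfrak{d}\,\Theta e^{\kappa\sigma}/k$ with $\mathfrak{d}=\|k'\|_{L^\infty(0,1)}$. The only presentational difference is that the paper cites \cite[Lemma 3.4]{fm1} for the group of $(t,x)$-boundary terms and checks by hand only the new age-related ones, whereas you verify everything directly (and in the nondegenerate setting the trace $w_x(t,A,\cdot)=0$ follows simply from $v(t,A,\cdot)=0$ in $H^1_0(0,1)$, no Hardy-type argument being needed).
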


\begin{proof} 
By \cite[Lemma 3.4]{fm1}, the boundary terms given in \eqref{D&BTndfm1} take the form
\[
 \text{\{B.T.\}$_3$}= s\kappa\|k'\|_{L^\infty(0,1)}\int_0^T\int_0^A[k\Theta e^{\kappa\sigma}
(w_x)^2]^{x=1}_{x=0}dadt.
\]
Using the definition of $\Phi$, the other boundary terms of  \:$<P^+_sw,
P^-_sw>_2$ become
\begin{equation}\nonumber
\begin{aligned}
& 
\int_0^T\int_0^A\Big[kw_x w_a\Big]_0^{1}dadt
-\frac{s}{2}\int_0^A \int_{0}^{1}\!\!\Big[ \Phi_a w^2\Big]_0^Tdxda
\\[3pt]
&
+s^2 \int_0^T\int_0^A\Big[ k\Phi_x\Phi_aw^2\Big]_0^{1}dadt
 -\frac{1}{2}\int_0^T\int_0^{1} \big[kw_x^2\big]_0^Adxdt\\
 &
 +\frac{1}{2}\int_0^T\int_0^{1}\big[ \big(s^2 k\Phi_x^2 - s (\Phi_t+\Phi_a) \big)w^2\big]_0^Adxdt\\
 &= \int_0^T\int_0^A\Big[kw_x w_a\Big]_0^{1}dadt-\frac{s}{2}\int_0^A \int_{0}^{1}\!\!\Big[ \Theta_a \Psi w^2\Big]_0^Tdxda\\[3pt]
&
+s^2 \int_0^T\int_0^A\Big[ k\Theta \Theta_a\Psi\Psi_xw^2\Big]_0^{1}dadt
 -\frac{1}{2}\int_0^T\int_0^{1} \big[kw_x^2\big]_0^Adxdt\\
 &
 +\frac{1}{2}\int_0^T\int_0^{1}\big[ \big(s^2 k\Theta^2\Psi_x^2 - s (\Theta_t+\Theta_a)\Psi \big)w^2\big]_0^Adxdt.
\end{aligned}
\end{equation}
As before, since $w \in \mathcal{V}$, $w(0, a, x)$, $w(T,a, x)$, $w_x(t,0, x)$,  $w_x(t,A, x)$, $w(t,A, x)$, $w(t, 0, x)$, $w(t,a, 0)$, $w(t,a,1)$ $w_x(t,a, 0)$ and $w_x(t,a,1)$  are well defined. Moreover, we have that $w_a
(t,a,0)$ and $w_a(t,a, 1)$ make sense and are actually 0.
 Thus, using the boundary conditions of $w=e^{s\Phi}v$, we get
\[
\begin{aligned}
& \int_0^T\int_0^A\Big[kw_x w_a\Big]_0^{1}dadt-\frac{s}{2}\int_0^A \int_{0}^{1}\!\!\Big[ \Theta_a \Psi w^2\Big]_0^Tdxda\\[3pt]
&
+s^2 \int_0^T\int_0^A\Big[ k\Theta \Theta_a\Psi\Psi_xw^2\Big]_0^{1}dadt
 -\frac{1}{2}\int_0^T\int_0^{1} \big[kw_x^2\big]_0^Adxdt\\
 &
 +\frac{1}{2}\int_0^T\int_0^{1}\big[ \big(s^2 k\Theta^2\Psi_x^2 - s (\Theta_t+\Theta_a)\Psi \big)w^2\big]_0^Adxdt=0.
\end{aligned}
\]
Hence the thesis.
\end{proof}
By Lemmas \ref{lemma1nd}, \ref{lemma2?} and \ref{NDBT} the next estimate holds:
\begin{Proposition}\label{stimaND}
There exist two  strictly positive constants $C$ and $s_0$ such
that, for all $s\ge s_0$,  all solutions $w$ of \eqref{ND1'} in $\mathcal{V}$ satisfy
\[
\begin{aligned}
&s\int_Q \Theta e^{\kappa \sigma} (w_x)^2 dxdadt + s^3
\int_Q\Theta^3 e^{3\kappa \sigma}w^2 dxdadt
\\
&\le
C\left(\int_Q\!\!\!f^{2}e^{2s\Phi}dxdadt
- s\kappa\|k'\|_{L^\infty(0,1)}\int_0^T\int_0^A[k\Theta e^{\kappa\sigma}
(w_x)^2]^{x=1}_{x=0}dadt\right).
\end{aligned}
\]
\end{Proposition}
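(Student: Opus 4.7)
The plan is simply to assemble the three lemmas already proved in the Appendix with the elementary inequality \eqref{stimettand}. Starting from \eqref{stimettand}, one has
\[
2\langle P^+_sw,P^-_sw\rangle_2 \le \|P_sw\|_2^2 = \|fe^{s\Phi}\|_2^2 = \int_Q f^2 e^{2s\Phi}\,dxdadt,
\]
so the whole game is to produce a lower bound for the inner product $\langle P^+_sw,P^-_sw\rangle_2$ in terms of the positive quantity on the left-hand side of the proposition, modulo the boundary term.

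First I would invoke Lemma \ref{lemma1nd} to replace $\langle P^+_sw,P^-_sw\rangle_2$ by $\{D.T.\}_2+\{B.T.\}_2$. Then Lemma \ref{lemma2?} provides two strictly positive constants $s_0,C$ such that for all $s\ge s_0$
\[
\{D.T.\}_2 \ge Cs\int_Q \Theta e^{\kappa\sigma}(w_x)^2\,dxdadt + Cs^3\int_Q \Theta^3 e^{3\kappa\sigma}w^2\,dxdadt,
\]
while Lemma \ref{NDBT} identifies $\{B.T.\}_2$ explicitly as the single surviving boundary contribution
\[
\{B.T.\}_2 = s\kappa\|k'\|_{L^\infty(0,1)}\int_0^T\!\!\int_0^A\bigl[k\Theta e^{\kappa\sigma}(w_x)^2\bigr]_{x=0}^{x=1}dadt.
\]

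Combining these three ingredients yields, for $s\ge s_0$,
\[
\begin{aligned}
&Cs\!\int_Q\!\Theta e^{\kappa\sigma}(w_x)^2 dxdadt + Cs^3\!\int_Q\!\Theta^3 e^{3\kappa\sigma}w^2 dxdadt \\
&\quad\le \{D.T.\}_2 = \langle P^+_sw,P^-_sw\rangle_2 - \{B.T.\}_2 \\
&\quad\le \tfrac{1}{2}\int_Q f^2 e^{2s\Phi}\,dxdadt - s\kappa\|k'\|_{L^\infty(0,1)}\!\int_0^T\!\!\int_0^A\!\bigl[k\Theta e^{\kappa\sigma}(w_x)^2\bigr]_{x=0}^{x=1}dadt.
\end{aligned}
\]
Dividing through by $C$ (and renaming the constant) gives exactly the claimed inequality.

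The proof is essentially mechanical once the three lemmas are in place; no serious obstacle remains. The only point requiring mild care is the choice of $s_0$: it must be taken large enough so that every lower-order term discarded in Lemma \ref{lemma2?} (through inequalities such as \eqref{ziand} and \eqref{ziond}) is absorbed by a fraction of the leading $s^3\Theta^3 e^{3\kappa\sigma}w^2$ term, and so that the factor $\tfrac12$ coming from $2\langle P^+_sw,P^-_sw\rangle_2\le \|fe^{s\Phi}\|_2^2$ is harmless. Taking the maximum of the thresholds $s_0$ appearing in Lemma \ref{lemma2?} and in the estimates \eqref{ziand}--\eqref{ziond} settles this point, and the conclusion follows.
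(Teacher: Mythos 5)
Your proposal is correct and follows exactly the route the paper intends: the paper itself derives Proposition \ref{stimaND} by combining \eqref{stimettand} with Lemmas \ref{lemma1nd}, \ref{lemma2?} and \ref{NDBT}, just as you do. Your remark on absorbing the constants (the factor $\tfrac12$ and the renaming after dividing by $C$) is the only bookkeeping needed, and it is handled correctly.
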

 Recalling the definition of $w$, we have $v= e^{-s\Phi}w$
and $v_{x}=  (w_{x}-s\Phi_{x}w)e^{-s\Phi}$. Thus, 
\[
\begin{aligned}
\left(s\Theta e^{\kappa \sigma}(v_x)^2 + s^3 \Theta^3
e^{3\kappa \sigma} v^2\right)e^{2s\Phi}&\leq c\left[ s\Theta e^{\kappa \sigma}(s^2\Theta^2w^2+(w_x)^2)+s^3 \Theta^3 e^{3\kappa \sigma}w^2\right] \\
& \leq c\left[s\Theta e^{\kappa \sigma}(w_x)^2 + s^3 \Theta^3
e^{3\kappa \sigma} w^2\right],
\end{aligned}
\]
for a  strictly positive constant $c$.
Hence, 
Theorem
\ref{nondegenerediv} follows immediately by Proposition \ref{stima} when $\mu \equiv 0$.

\vspace{0.2cm}
Now, we assume that $\mu \not \equiv 0$.

To complete the proof of Theorem \ref{nondegenerediv} we consider, as before, the function $\overline{f}=f+\mu v$.
 Hence,  there are  two  strictly positive constants $C$ and $s_0$ such that, for
all $s\geq s_0$, the following inequality holds
\begin{equation} \label{fati1ND}
\begin{aligned}
&\int_{Q}(s^{3}\phi^{3}v^{2}+s\phi v_{x}^{2})e^{2s\Phi} dxdadt \leq c \Big(\int_{Q}\bar f^{2}e^{2s\Phi}dxdadt
 -
s\kappa\int_0^T\int_0^A\left[ke^{2s\Phi}\phi(v_x)^2
\right]_{x=0}^{x=1}dadt\Big).
\end{aligned}
\end{equation}
On the other hand, we have
\begin{equation} \label{4'nd}
\begin{aligned}
\int_Q\bar{f} ^{2}e^{2s\Phi}dxdadt
\leq 2\Big(\int_Q|f|^{2}e^{2s\Phi}dxdadt
+\|\mu\|_{L^\infty(Q)}^{2}\int_{Q}|v|^{2}e^{2s\Phi}dxdadt\Big).
\end{aligned}
\end{equation}
Now, applying the Hardy-Poincar\'{e} inequality to the function $\nu:=e^{s\Phi}v$,
we obtain
 \[
      \begin{aligned}
 \int_{Q}|v|^{2}e^{2s\Phi}dxdadt&\le   \int_{Q}
                    \frac{\nu^2}{x^2}dxdadt
    \le
   C\int_{Q}(e^{s\Phi}v)^2_xdxdadt
    \\
    &\le
    C\int_{Q} e^{2s\Phi}v_x^2dxdadt
   + Cs^2\int_{Q}\Theta^2 e^{2s\Phi}\Psi_x^2 v^2dxdadt.
  \end{aligned}
  \]
   Using this last inequality in (\ref{4'nd}), it follows
      \begin{equation}\label{fati2nd}
      \begin{aligned}
      \int_Q |\bar{f}|^{2}e^{2s\Phi}dxdadt
      &\le
      2\int_Q |f|^{2}e^{2s\Phi}dxdadt
      +C\int_Qe^{2s\Phi} v_x^2 dxdadt
      \\&+ Cs^2\int_Q \Theta^2 e^{2s\Phi}e^{2\kappa \sigma} v^2dxdadt.
     \end{aligned} \end{equation}
   Substituting in
   (\ref{fati1ND}), one can conclude

      \[
      \begin{aligned}
    &\int_{Q}(s^{3}\phi^{3}v^{2}+s\phi v_{x}^{2})e^{2s\Phi} dxdadt     \le
       C\Big(\int_Q |f|^{2}e^{2s\Phi}dxdadt
        \\[3pt]& +\int_Q\!\!\!e^{2s\Phi} v_x^2 dxdadt
         + s^2\int_Q\!\!\! \Theta^2 e^{2s\Phi}e^{2\kappa \sigma} v^2dxdadt
          -
s\kappa\int_0^T\int_0^A\left[ke^{2s\Phi}\phi(v_x)^2
\right]_{x=0}^{x=1}dadt \Big).
        \end{aligned}
      \]
     
This completes the proof of Theorem \ref{nondegenerediv}.

\paragraph{Proof of Theorem \ref{nondegenereDebole}}

As before, to prove Theorem \ref{nondegenereDebole}, one 
can assume, first of all, that $\mu\equiv0$. The case $\mu \not \equiv 0$ follows as in the previous subsection. 

If $\mu \equiv0$, the proof in the divergence case is {\it formally} similar to the one  of Theorem \ref{nondegenerediv}  (see also the proof of \cite[Theorem 3.1]{fm1}). Observe that, in this case, integrations by parts are not immediately justified since the function $k$ is not $C^1[0,1]$.   However, proceeding as in \cite{fm1}, one can motivate them. 

In the non divergence case one can proceed as in the proofs of \cite[Theorems 3.1 and 3.2]{fm1}. 
\section*{Acknowledgments}

The author is a member of the Gruppo Nazionale per l'Analisi Matematica, la Probabilit\`a e le loro Applicazioni (GNAMPA) of the
Istituto Nazionale di Alta Matematica (INdAM) and she is partially supported by the reaserch project GNAMPA 2017 {\em Comportamento asintotico e controllo di equazioni di evoluzione non lineari}. 

She dedicates this work to her father.

\end{document}